\documentclass[10pt,a4paper]{article}
\usepackage[active]{srcltx}

\usepackage[final]{optional}
\usepackage[british,american]{babel}

\usepackage{amsfonts, amsmath, wasysym}
\usepackage{amssymb,amsthm,
paralist
}
\usepackage{
latexsym,
nicefrac,
}
\usepackage[usenames]{color}

\usepackage{url}

\definecolor{darkgreen}{rgb}{0,0.5,0}
\definecolor{darkred}{rgb}{0.7,0,0}
\usepackage[colorlinks, 
citecolor=darkgreen, linkcolor=darkred
]{hyperref}
\usepackage{esint}
\usepackage{bibgerm}
\usepackage[normalem]{ulem}

\bibliographystyle{geralpha}

\textwidth=135mm
\textheight=228mm
\topmargin=-0.4in
\oddsidemargin=+0.4in
\evensidemargin=+0.4in

\parindent=0pt
\parskip=10pt

\theoremstyle{plain}
\newtheorem{lemma}{Lemma}[section]
\newtheorem{thm}[lemma]{Theorem}
\newtheorem{prop}[lemma]{Proposition}
\newtheorem{cor}[lemma]{Corollary}

\theoremstyle{definition}

\newtheorem{conj}[lemma]{Conjecture}

\newtheorem{rmk}[lemma]{Remark}
\setlength{\unitlength}{1mm}

\numberwithin{equation}{section}
\newcommand{\m}{\ensuremath{{\cal M}}}

\newcommand{\Om}{\Omega}

\newcommand{\R}{\ensuremath{{\mathbb R}}}

\newcommand{\C}{\ensuremath{{\mathbb C}}}

\DeclareMathOperator{\Vol}{vol}

\newcommand{\beq}{\begin{equation}}
\newcommand{\eeq}{\end{equation}}
\newcommand{\beqa}{\begin{equation}\begin{aligned}}
\newcommand{\eeqa}{\end{aligned}\end{equation}}
\newcommand{\brmk}{\begin{rmk}}
\newcommand{\ermk}{\end{rmk}}
\newcommand{\partref}[1]{\hbox{(\csname @roman\endcsname{\ref{#1}})}}

\newcommand{\Rm}{{\mathrm{Rm}}}
\newcommand{\Ric}{{\mathrm{Ric}}}

\newcommand*{\ee}{\mathop{\mathrm{e}}\nolimits}

\newcommand*\dist{\mathop{\mathrm{dist}}\nolimits}

\newcommand*\Isom{\mathop{\mathrm{Isom}}\nolimits}

\newcommand{\Div}{\operatorname{div}}

\newcommand*\Mf{\mathcal{M}}
\newcommand*\Disc{\mathcal{D}}

\newcommand*{\TD}[1][]{\mathsf{T}^{#1\!}\mathcal{D}}

\newcommand*\gBall{\mathcal{B}}

\newcommand*\pddt{\frac{\partial}{\partial t}}

\newcommand*\pdds{\frac{\partial}{\partial s}}

\newcommand*\dx{\,\mathrm{d}x}
\newcommand*\dy{\,\mathrm{d}y}

\newcommand*\dz{\mathrm{d}z}
\newcommand*\Ball{\mathbb{B}}

\title{{\sc
Existence of Ricci flows of incomplete surfaces
}\\ 
}
\author{Gregor Giesen and Peter M. Topping}
\date{\today}

\begin{document}
\maketitle

\begin{abstract}
We prove a general existence result for instantaneously complete
Ricci flows starting at an arbitrary Riemannian surface which
may be incomplete and may have unbounded curvature.
We give an explicit formula for the maximal existence time,
and describe the asymptotic behaviour in most cases.
\end{abstract}

\section{Introduction}
Hamilton's Ricci flow \cite{Ham82} takes a Riemannian metric $g_0$ on
a manifold $\m$ and deforms it under the evolution equation
\begin{equation}
  \label{eq:ricci-flow}
\left\{
\begin{aligned}
  \pddt g(t) &= -2\Ric\bigl[g(t)\bigr]\\
  g(0) &= g_0
\end{aligned}
\right.
\end{equation}
There is now a good well-posedness theory for this PDE which
we summarise in the following theorem.
\begin{thm}{\rm (Hamilton \cite{Ham82}, DeTurck \cite{DeT03}, Shi
    \cite{Shi89}, Chen-Zhu \cite{CZ06}.)}
  \label{shihamilton}
  Given a complete Riemannian manifold $\bigl(\m^n,g_0\bigr)$ with bounded
  curvature $\bigl|\Rm[g_0]\bigr|\le K_0$, there exists $T>0$ depending only
  on $n$ and $K_0$, and a  
  Ricci flow $g(t)$ for $t\in [0,T)$ satisfying 
  \eqref{eq:ricci-flow} with bounded curvature and for which
  $\bigl(\m,g(t)\bigr)$ is complete for each $t\in [0,T)$. Moreover, any
  other complete, bounded curvature Ricci flow $\tilde g(t)$
  with $\tilde g(0)=g_0$
  must agree with $g(t)$ while both flows exist. 
\end{thm}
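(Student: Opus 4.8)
The plan is to convert the weakly parabolic system \eqref{eq:ricci-flow} into a genuinely parabolic one, solve that, and then transform the solution back, treating uniqueness by a parallel but more delicate construction. The obstruction to attacking \eqref{eq:ricci-flow} directly is that $g\mapsto-2\Ric[g]$ has degenerate principal symbol, a reflection of the diffeomorphism invariance $\Ric[\ph^*g]=\ph^*\Ric[g]$, so the classical parabolic existence theory does not apply. To break this gauge I would fix a smooth background metric $\ghat$ (for instance $\ghat=g_0$) and pass to the \emph{Ricci-DeTurck flow}
\[
  \pddt g = -2\Ric[g] + \Lie_{W(g)}\,g,\qquad
  W(g)^k = g^{pq}\bigl(\Ga^k_{pq}[g]-\hat\Ga^k_{pq}\bigr),
\]
where $\Ga$ and $\hat\Ga$ are the Christoffel symbols of $g$ and $\ghat$. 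A computation of the top-order terms shows that the DeTurck term $\Lie_{W(g)}g$ precisely cancels the degeneracy, rendering the system strictly parabolic.

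For existence I would then solve this parabolic system and undo the gauge. On a closed manifold short-time solvability is immediate from standard quasilinear parabolic theory; in the complete, noncompact, bounded-curvature setting I would follow Shi \cite{Shi89}, exhausting $\m$ by compact domains $\Om_j\uparrow\m$, solving a Dirichlet-type problem on each and deriving \emph{uniform} interior a priori bounds. The decisive ingredient is Shi's local control of $|\nabla^m\Rm|$ on a smaller region in terms of a curvature bound on a larger one and the elapsed time; starting from $|\Rm[g_0]|\le K_0$ this yields a uniform existence time $T=T(n,K_0)>0$ together with uniform $C^\infty_{\mathrm{loc}}$ bounds, so a subsequential limit gives a smooth, complete, bounded-curvature solution $\bar g(t)$ on $\m\times[0,T)$. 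To return to \eqref{eq:ricci-flow} I would integrate the gauge vector field, solving $\pddt\ph_t=-W(\bar g(t))\circ\ph_t$ with $\ph_0=\id$; the bounds on $W$ guarantee a family of diffeomorphisms on $[0,T)$, and $g(t):=\ph_t^*\bar g(t)$ is then the desired Ricci flow.

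Uniqueness, the contribution of Chen-Zhu \cite{CZ06}, is the subtlest point and the main obstacle. Given two complete bounded-curvature solutions $g_1,g_2$ with $g_1(0)=g_2(0)=g_0$, the weak parabolicity again prevents equality from being automatic. I would run the harmonic map heat flow $\ph_t:(\m,g_1(t))\to(\m,g_2(t))$ with $\ph_0=\id$, so that the pulled-back metric satisfies a strictly parabolic Ricci-DeTurck equation; an energy and maximum-principle comparison then forces the two flows to agree in the gauge, whence $g_1=g_2$ for as long as both exist. The genuinely hard work throughout is the noncompact a priori estimation: lacking compactness, the maximum principle must be used in a localized or weighted form, and securing both short-time existence and the requisite bounds for the Ricci-DeTurck flow and the harmonic map heat flow on a complete manifold of infinite extent is exactly where the analytic machinery of Shi and of Chen-Zhu becomes indispensable.
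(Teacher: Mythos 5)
The paper does not prove Theorem \ref{shihamilton} at all: it is quoted as background with attributions to Hamilton, DeTurck, Shi and Chen--Zhu, and the proof lives entirely in the cited works. Your outline faithfully reproduces the standard strategy of those references --- breaking the diffeomorphism invariance of \eqref{eq:ricci-flow} via the Ricci--DeTurck gauge, obtaining existence on a complete noncompact manifold by Shi's exhaustion together with his local derivative estimates $|\nabla^m\Rm|$ (which is what makes $T$ depend only on $n$ and $K_0$), pulling back by the integrated gauge vector field, and proving uniqueness \`a la Chen--Zhu by coupling with the harmonic map heat flow --- so it is the same approach as the (cited) proof, correct as an architecture though necessarily only a sketch of the substantial a priori estimates involved.
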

We will call the Ricci flow whose existence is asserted by
this theorem the \emph{Hamilton-Shi} Ricci flow.

This paper is dedicated to the more general problem
of posing Ricci flow in the case that the initial manifold
$(\m,g_0)$ may be incomplete, and may have curvature unbounded
above and/or below. Without further conditions, this problem
is ill-posed, with extreme nonuniqueness. However, in \cite{Top10}
the second author introduced the restricted class of 
\emph{instantaneously complete} Ricci flows, proving the following
theorem in the case that $\dim\m=2$.

\begin{thm}{\rm (Topping \cite{Top10}.)}
  \label{ICexistence1}
  Let $(\m^2,g_0)$ be any smooth metric on any surface $\m^2$
  (without boundary) with Gaussian curvature bounded above by $K_0\in \R$.
  Then for $T>0$ sufficiently small so that 
  $K_0<\frac{1}{2T}$,
  there exists a smooth Ricci flow $g(t)$ on $\m^2$ for 
  $t\in [0,T]$, 
  such that $g(0)=g_0$ and $g(t)$ is complete
  for all $t\in (0,T]$.
  
  The curvature of $g(t)$ is uniformly bounded above, and
  $g(t)$ is \textbf{maximally stretched}
  in the sense that if $\bigl(\tilde g(t)\bigr)_{t\in[0,\tilde T]}$ 
  is any Ricci flow on $\Mf^2$ with $\tilde g(0)\leq g(0)$ (with $\tilde g(t)$ 
  not necessarily complete or of bounded curvature) then
  \[ \tilde g(t) \le g(t) \quad\text{for every } t\in\bigl[0,\min\{T,\tilde T\}\bigr]. \]
\end{thm}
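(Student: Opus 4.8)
The plan is to exploit the special structure of Ricci flow in two dimensions, where it is purely conformal: since $\Ric = Kg$ with $K$ the Gaussian curvature, \eqref{eq:ricci-flow} reads $\pddt g = -2Kg$, so the conformal class is preserved. I would therefore reduce to a scalar PDE. Away from the compact case $\m = S^2$ (where $g_0$ is automatically complete, so Hamilton's theorem together with the maximum principle already delivers everything), I would pass to the universal cover, which by uniformization is conformally $\C$ or the unit disc $\mathbb{D}$, and observe that the canonical flow to be constructed is equivariant and hence descends to $\m$. Writing $g = e^{2u}\,|\dz|^2$ on the model, the curvature is $K = -e^{-2u}\Delta u$ and the flow becomes the logarithmic fast-diffusion equation $\pddt u = e^{-2u}\Delta u$, equivalently $\pddt\rho = \Delta\log\rho$ for $\rho = e^{2u}$.

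Next I would establish the upper curvature bound, which is also where the hypothesis $K_0<\frac{1}{2T}$ enters. Under \eqref{eq:ricci-flow} the Gaussian curvature satisfies $\pddt K = \Delta_{g(t)}K + 2K^2$, and comparison with the ODE $\dot\phi = 2\phi^2$, $\phi(0)=K_0$, whose solution is $\phi(t)=\frac{K_0}{1-2K_0t}$, yields by the maximum principle $K(t)\le \frac{K_0}{1-2K_0t}$. This stays finite exactly for $t<\frac{1}{2K_0}$ when $K_0>0$, i.e. under $K_0<\frac{1}{2T}$, and for all $t$ when $K_0\le 0$. I would apply this on the approximate flows below, where the maximum principle is legitimate, and let the bound pass to the limit.

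The heart is the construction. I would exhaust $\m$ (or the model) by compact domains $\Omega_1\subset\Omega_2\subset\cdots$ and on each solve the flow with the conformal factor forced to $+\infty$ on $\partial\Omega_i$, obtained as the increasing limit of solutions with finite Dirichlet data $u=j$ as $j\to\infty$. Because infinite boundary values push the boundary to infinite distance, each resulting flow $g_i(t)$ is complete on $\Omega_i$. A comparison principle for the conformal-factor equation then does double duty: comparing $g_{i+1}$ with $g_i$ on $\Omega_i$ (equal initial data, while $g_i=+\infty\ge g_{i+1}$ on $\partial\Omega_i$) shows the $g_i$ are monotone and, together with interior estimates for the fast-diffusion equation and the uniform upper curvature bound, converge to a limiting Ricci flow $g(t)$ on $\m$ with $g(0)=g_0$. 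The same comparison, applied to an arbitrary competitor $\tilde g$ with $\tilde g(0)\le g_0$, gives $\tilde g\le g_i$ on each $\Omega_i$ and hence $\tilde g\le g$ in the limit, which is precisely the maximally-stretched conclusion and also identifies which limit has been obtained.

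Finally I would verify instantaneous completeness of $g(t)$ for $t\in(0,T]$ by producing complete lower barriers, comparing from below against explicit complete solutions such as the expanding hyperbolic model, so that the conformal factor cannot degenerate near the ends and the limit inherits completeness rather than collapsing. The main obstacle, on which everything rests, is the comparison principle itself: the usual maximum principle for Ricci flow requires completeness and bounded curvature of both flows, whereas here the competitor $\tilde g$ need be neither, and the approximate flows blow up at their boundaries. Establishing a comparison principle in this degenerate regime, via carefully chosen barriers for $\pddt u = e^{-2u}\Delta u$ that control the behaviour at infinity and at the infinite boundary data, is the technically delicate step, and it is what makes both the monotone construction and the maximally-stretched property rigorous.
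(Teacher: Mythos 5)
The paper does not actually prove this statement: Theorem \ref{ICexistence1} is imported verbatim from \cite{Top10} and used as a black box (notably inside the proof of Theorem \ref{thm:exist-D}), so there is no internal proof to compare against. Judged as a reconstruction of the argument of \cite{Top10}, your outline is the right one and matches the strategy the present paper itself follows when it generalises the result: conformal reduction to $\pddt u=\ee^{-2u}\Delta u$, exhaustion by compact domains with conformal factor forced to $+\infty$ on the boundary, monotonicity via a comparison principle of the type of Theorem \ref{thm:direct-comp-principle} (which simultaneously yields the maximally stretched property against arbitrary competitors), the upper curvature bound from $\pddt K=\Delta K+2K^2$ and the ODE $\dot\phi=2\phi^2$ (which is exactly where $K_0<\frac1{2T}$ enters), and instantaneous completeness from the lower barrier $(2t)H$ as in Lemma \ref{lemma:barriers}(i).

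There is, however, one concrete gap. Your exhaustion produces a sequence of flows that is \emph{decreasing} when restricted to a fixed compact set (the flow on the larger domain lies below the one with infinite boundary data on the smaller domain), and you assert that the limit is a Ricci flow with $g(0)=g_0$. The upper curvature bound and interior parabolic estimates do not by themselves prevent the limit from dropping discontinuously at $t=0$: one needs a lower bound on the conformal factors near $t=0$ that is \emph{uniform in the exhaustion parameter}, and the barrier $(2t)H$ is useless for this since it vanishes at $t=0$. This is precisely the point at which \cite{Top10} (and the present paper, in the proof of Theorem \ref{thm:exist-D}) invokes a pseudolocality-type two-sided curvature estimate — Chen's local estimate, Theorem \ref{thm:chen-2d-local-curv-estim} — to get $\bigl|K[g_j(t)]\bigr|\le 2r_0^{-2}$ on a fixed compact set for a definite time $\tau>0$ independent of $j$, which then integrates to the required uniform lower bound on $u_j$ near $t=0$. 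A related but smaller omission is the uniform \emph{upper} bound needed to show that the increasing limit over the Dirichlet data $u=j$ on a fixed domain does not blow up everywhere; this comes from comparison with the explicit expanding hyperbolic supersolutions $h_\delta$ as in Lemma \ref{lemma:barriers}(ii), and your ``carefully chosen barriers'' gestures at it without supplying it. With the pseudolocality input added, your outline is essentially the proof.
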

From now on we will call a Ricci flow $\bigl(\m,g(t)\bigr)_{t\in I}$ 
where $I=[0,T)$ or $I=[0,T]$ \textbf{instantaneously complete}, 
if $g(t)$ is complete for all $t\in I$ with $t>0$.

The significance of this class of instantaneously complete
Ricci flows lies in the conjecture, also from 
\cite{Top10}, that the solution $g(t)$ should be \emph{unique}
within this class. See
Conjecture \ref{uniqueness_conj} below.

In this paper we improve Theorem \ref{ICexistence1}
in three ways. First, we drop the need for the upper curvature 
bound on the initial metric; second, we give a precise formula for the 
maximal existence time in all cases; third we show that
the (rescaled) Ricci flow converges to a hyperbolic metric
whenever there exists such a metric to which it can converge.

\begin{thm}{\rm(Main theorem.)}
  \label{thm:2d-existence}
  Let $\bigl(\Mf^2,g_0\bigr)$ be a smooth Riemannian surface which need not be
  complete, and could have unbounded curvature. 
  Depending on the conformal type, we define $T\in(0,\infty]$ by
  \[ T := \begin{cases}
    \frac1{8\pi}\Vol_{g_0}\Mf & \text{if }(\Mf,g_0)\cong\mathcal S^2,\\
    \frac1{4\pi}\Vol_{g_0}\Mf & \text{if }(\Mf,g_0)\cong\mathbb
    C\text{ or
    }(\Mf,g_0)\cong\mathbb R\!P^2, \\
    \qquad\infty & \text{otherwise}.\footnotemark
  \end{cases} \]
  \footnotetext{Note that also $T=\infty$ if $\Vol_{g_0}\mathbb C=\infty$.}
  Then there exists a smooth Ricci flow $\bigl(g(t)\bigr)_{t\in[0,T)}$
  such that 
  \begin{compactenum}
  \item $g(0)=g_0$;
  \item $g(t)$ is instantaneously complete;
  \item $g(t)$ is maximally stretched,
  \end{compactenum}
  and this flow is unique in the sense that if
  $\bigl(\tilde g(t)\bigr)_{t\in[0,\tilde T)}$ is 
  any other Ricci flow on $\m$ satisfying 1,2 and 3, then
  $\tilde T\leq T$ and $\tilde g(t)=g(t)$ for all $t\in[0,\tilde T)$.

  If $T<\infty$, then we have 
  \[ \Vol_{g(t)}\Mf = \left\{\begin{array}{ll}
      8\pi (T-t) & \text{if }(\Mf,g_0)\cong\mathcal S^2,\\
      4\pi (T-t) & \text{otherwise},
    \end{array}\right\}\quad\longrightarrow\quad 0 \quad\text{ as } t\nearrow
  T, \] 
  and in particular, $T$ is the maximal existence time.
  Alternatively, if $\Mf$ supports a complete
  hyperbolic\footnote{We call a metric $H$ \emph{hyperbolic} if it has
    constant Gaussian curvature $K[H]\equiv -1$.} metric $H$ conformally 
  equivalent to $g_0$ (in which case $T=\infty$) then we have convergence of
  the rescaled solution 
  \[ \frac1{2t}g(t) \longrightarrow H  \quad\text{smoothly locally as }
  t\to\infty. \] 
  If additionally there exists a constant $M>0$ such that $g_0\le MH$ then the
  convergence is global: For any $k\in\mathbb N_0:=\mathbb N\cup\{0\}$ and $\eta\in(0,1)$ there
  exists a constant $C=C(k,\eta,M)>0$ such that for all $t\geq 1$ 
  \[ \left\| \frac1{2t} g(t) - H \right\|_{C^k(\Mf,H)} \le \frac C{t^{1-\eta}}
  \quad\stackrel{t\to\infty}\longrightarrow\quad 0. \]
  In fact, in this latter case, for all $t>0$ we have 
  $\left\| \frac1{2t} g(t) - H \right\|_{C^0(\Mf,H)} 
  \le \frac C{t}$,
  and even
  \[ 0\le \frac1{2t}g(t) -H \le \frac M{2t}H. \]
\end{thm}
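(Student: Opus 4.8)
The plan is to work in the conformal gauge relative to the fixed hyperbolic background $H$. Writing the flow as $g(t)=u\,H$ with $u>0$, the two-dimensional Ricci flow $\partial_t g=-2\Ric[g]$ becomes the scalar logarithmic fast-diffusion equation
\[ \partial_t u=\Delta_H\log u-2K[H]=\Delta_H\log u+2, \]
since $K[H]\equiv-1$. The two spatially constant solutions $u\equiv 2t$ and $u\equiv 2t+M$ — equivalently the explicit expanding hyperbolic solitons $2tH$ and $(2t+M)H$ — will serve as barriers. For the lower barrier I would invoke the maximally stretched property of $g(t)$ directly: $2tH$ is a complete, bounded-curvature Ricci flow for $t>0$ whose conformal factor degenerates to $0\le g_0$ as $t\to0$, so property 3 forces $2tH\le g(t)$. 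For the upper barrier I would compare with $(2t+M)H$: since $MH$ is complete with curvature $-1/M$ bounded, the Hamilton--Shi flow of Theorem \ref{shihamilton} starting from $MH$ is exactly $(2t+M)H$, and this complete, bounded-curvature flow is — by the uniqueness in Theorem \ref{shihamilton} — the maximally stretched flow for $MH$. Monotonicity of maximally stretched flows in the initial datum, itself an immediate consequence of property 3, then yields $g(t)\le(2t+M)H$ from $g_0\le MH$. Together these give $0\le\frac1{2t}g(t)-H\le\frac M{2t}H$ for all $t>0$, the final displayed inequality, whence the $C^0$ estimate $\|\frac1{2t}g(t)-H\|_{C^0(\Mf,H)}\le C/t$ is immediate.

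For the $C^k$ estimates I would rescale. Setting $w:=u/(2t)$ and $\tau:=\log t$, the equation becomes
\[ \partial_\tau w=\tfrac12\Delta_H\log w-(w-1), \]
and the barrier bounds read $1\le w\le 1+\tfrac M{2t}$. For $t\ge1$ this confines $w$ to $[1,1+M/2]$, so the coefficient $1/w$ of the diffusion term is bounded above and below: the equation is \emph{uniformly parabolic} with smooth coefficients on the fixed manifold $(\Mf,H)$.

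The main step, and the main obstacle, is to bound $\|w-1\|_{C^{k'}(\Mf,H)}$ \emph{uniformly in time} for every fixed $k'$. The difficulty is that a general complete hyperbolic surface need not have bounded geometry — its injectivity radius may decay along cusps or short geodesics — so interior parabolic estimates do not have uniform constants directly on $\Mf$. I would circumvent this by lifting everything to the universal cover $\Hbb^2$: the flow, the background $H$, and the factor $w$ all lift, with $w$ lifting to a $\pi_1(\Mf)$-invariant solution of the same equation subject to the same $C^0$ bounds. On $\Hbb^2$ the geometry is homogeneous, so interior estimates (De Giorgi--Nash--Moser for $C^\alpha$, then Schauder and differentiation-bootstrapping for $C^{k',\alpha}$) have constants depending only on $k'$ and the parabolicity constants, hence only on $k'$ and $M$, and are uniform over parabolic cylinders $B(p,1)\times[t/2,t]$ with $t\ge2$. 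By $\pi_1$-invariance these descend to $\sup_{t\ge1}\|w(t)-1\|_{C^{k'}(\Mf,H)}\le C(k',M)$.

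Finally I would interpolate. Writing $\phi:=w-1$, the convexity inequality $\|\phi\|_{C^k}\le C\,\|\phi\|_{C^0}^{\,1-k/k'}\|\phi\|_{C^{k'}}^{\,k/k'}$, combined with the $C^0$ decay $\|\phi\|_{C^0}\le\frac M{2t}$ and the uniform bound $\|\phi\|_{C^{k'}}\le C(k',M)$, gives $\|\phi\|_{C^k}\le C\,t^{-(1-k/k')}$. Given $k$ and $\eta\in(0,1)$, choosing $k'>k/\eta$ makes $1-k/k'>1-\eta$, yielding $\|\frac1{2t}g(t)-H\|_{C^k(\Mf,H)}\le C(k,\eta,M)\,t^{-(1-\eta)}$; the loss $\eta$ is exactly the interpolation exponent $k/k'$, which forces $k'\to\infty$ as $\eta\to0$ and explains why the sharp $C^0$ rate $t^{-1}$ degrades to $t^{-(1-\eta)}$ in higher norms. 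The only genuinely delicate ingredient is the uniform higher-order regularity of the previous paragraph; the barrier comparison and the interpolation are both soft.
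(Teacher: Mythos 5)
Your proposal addresses only the final convergence estimates of the theorem and takes the existence of the flow for granted. The heart of Theorem \ref{thm:2d-existence} is the construction of a maximally stretched, instantaneously complete flow starting at an arbitrary, possibly incomplete, possibly unbounded-curvature $g_0$, together with the identification of the maximal time $T$, the volume formula, the uniqueness statement, and the merely local convergence when no bound $g_0\le MH$ is available. None of this appears in your argument. The paper obtains it by lifting to the universal cover and then, in the disc case, exhausting by discs $D_j$, applying Theorem \ref{ICexistence1} on each $D_j$ (where a conformal hyperbolic metric dominates $g_0|_{D_j}$), extending each local flow for all time via the curvature bounds of Proposition \ref{prop:uniform-curvature-bound}, and passing to a monotone limit, with Chen's pseudolocality result (Theorem \ref{thm:chen-2d-local-curv-estim}) needed to keep the conformal factors from collapsing near $t=0$; the plane case requires DiBenedetto--Diller together with the Rodriguez--Vazquez--Esteban comparison principle, and the descent to $\Mf$ uses preservation of isometries under maximally stretched flows. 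Your proof cannot begin until some such construction is in place.

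Second, even granting existence, your lower-barrier argument fails. The maximally stretched property compares $g(t)$ with competitors $\tilde g$ defined on $[0,\tilde T]$ satisfying $\tilde g(0)\le g_0$; the flow $2tH$ is not such a competitor because it is not a metric at $t=0$, and the shifted flows $2(t+\epsilon)H$ have $2\epsilon H\not\le g_0$ in general (whenever $g_0$ is incomplete, no positive multiple of the complete metric $H$ can lie below it). The paper instead derives $2tH\le g(t)$ from \emph{instantaneous completeness}: Chen's a priori estimate gives $K[g(t)]\ge -\frac{1}{2t}$, and Yau's Schwarz lemma then yields the comparison with $H$ (Lemma \ref{lemma:barriers}(i)). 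Your upper barrier also rests on an unjustified claim: the uniqueness in Theorem \ref{shihamilton} holds only within complete bounded-curvature flows, so it does not show that $(2t+M)H$ is maximally stretched against arbitrary (possibly incomplete, unbounded-curvature) competitors such as $g(t)$; the paper proves $g(t)\le(2t+M)H$ directly with the comparison principle of Theorem \ref{thm:direct-comp-principle} applied on slightly shrunk discs. The remainder of your argument --- uniform parabolicity of the normalised equation coming from the two-sided barrier, interior parabolic estimates carried out where the geometry is homogeneous, and interpolation between the $C^0$ decay and uniform $C^l$ bounds to obtain the $t^{-(1-\eta)}$ rate --- is sound and is essentially the paper's Lemma \ref{lemma:Ck-bounds} and Theorem \ref{thm:Ck-uniform-conv-disc}; but it is the only portion of the theorem you have proved, and only conditionally on the missing barriers and on the existence of the flow itself.
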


Note, for any smooth Riemannian manifold $(\Mf,g)$ we denote the $C^k$ norm
of a smooth tensor field $T$ on $\m$ by
\[ \Bigl\|\,T\,\Bigr\|_{C^k(\Mf,g)} = \sum_{j=0}^k\; \sup_\Mf\, \Bigl|\,
\nabla_{\!g}^j T\, \Bigr|_g. \]

The final part of Theorem \ref{thm:2d-existence} suggests
that the Ricci flow has a uniformising effect in great generality.
Note that in the special case of \emph{compact} surfaces, an
elegant and complete theory\footnote{For a survey see \cite[\S5]{CK04} or
  \cite{Gie07}.} has been developed by Hamilton and Chow to this effect:
\begin{thm}{\rm (Hamilton \cite{Ham88}, Chow \cite{Cho91}.)}
  \label{thm:exist-unique-closed-surface}
  Let $\bigl(\Mf^2,g_0\bigr)$ be a smooth, compact Riemannian surface without
  boundary.
  Then there exists a unique Ricci flow $g(t)$ with $g(0)=g_0$ for all
  $t\in[0,T)$ up to a maximal time 
  \[ T = \begin{cases}
    \frac1{4\pi\chi(\Mf)}\Vol_{g_0} \Mf & \text{if }\chi(\Mf)>0\\
    \quad\infty & \text{otherwise,}
  \end{cases} \]
  where $\chi(\Mf)$ denotes the Euler characteristic of $\Mf$. 
  Moreover, the rescaled solution 
  \[  \begin{cases}
    \frac1{2(T-t)}\, g(t) & \text{if } \chi(\Mf)>0 \\
    \hfill g(t) & \text{if } \chi(\Mf)=0 \\
    \hfill\frac1{2t}\, g(t) & \text{if } \chi(\Mf)<0
  \end{cases} \]
  converges smoothly 
  to a conformal metric of constant
  Gaussian curvature $1,0,-1$ resp. as $t\to T$.
\end{thm}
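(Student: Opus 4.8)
The plan is to reduce the flow to a scalar parabolic equation, use Gauss--Bonnet to pin down the maximal time $T$, and then establish convergence of the suitably normalised flow according to the sign of $\chi(\Mf)$. Since $\Mf$ is compact it is complete and of bounded curvature, so Theorem~\ref{shihamilton} already furnishes a unique short-time solution; what remains is the exact value of $T$ and the asymptotics. In two dimensions $\Ric[g]=K[g]\,g$, so \eqref{eq:ricci-flow} reads $\pddt g=-2K[g]\,g$, which visibly preserves the conformal class. Fixing a background metric $\bar g$ and writing $g(t)=e^{2u(t)}\bar g$ reduces the flow to the single quasilinear parabolic equation $\pddt u=e^{-2u}\bigl(\Delta_{\bar g}u-K[\bar g]\bigr)$ for the conformal factor, on which the scalar maximum principle is available.

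For the maximal time I would differentiate the area element: from $\partial_t(dA_{g(t)})=-2K\,dA_{g(t)}$ and the Gauss--Bonnet identity $\int_\Mf K\,dA=2\pi\chi(\Mf)$ (constant in $t$), one obtains $\ddt\Vol_{g(t)}\Mf=-4\pi\chi(\Mf)$, hence $\Vol_{g(t)}\Mf=\Vol_{g_0}\Mf-4\pi\chi(\Mf)\,t$. When $\chi>0$ this forces the volume to vanish exactly at $T=\frac1{4\pi\chi(\Mf)}\Vol_{g_0}\Mf$, yielding the stated formula, while for $\chi\le 0$ the volume is non-decreasing and one expects eternal existence. The substantive point is then that the smooth flow persists right up to this volume-exhaustion time, i.e.\ no singularity forms earlier. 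For this I would exploit the evolution equation $\pddt R=\lap R+R^2$ of the scalar curvature $R=2K$: the maximum principle instantly gives a lower bound $R\ge R_{\min}(0)/(1-R_{\min}(0)t)$ (controlling the metric from below), whereas the upper curvature bound is far more delicate and is where the Hamilton entropy and differential-Harnack estimates enter.

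To extract the asymptotics I would pass to the volume-normalised flow $\pddt\tilde g=-2\bigl(K-\overline K\bigr)\tilde g$, whose stationary points are precisely the constant-curvature metrics in the conformal class, and prove convergence to one of them. For $\chi<0$ and $\chi=0$ Hamilton's estimates show $K$ becomes and stays uniformly close to its (constant) average, after which Shi-type derivative bounds and a standard interpolation bootstrap promote $C^0$-closeness to smooth convergence, identifying the limit as the hyperbolic, respectively flat, conformal representative. The genuinely hard case is $\chi>0$, i.e.\ $\Mf\cong\Sph^2$ or $\mathbb R\!P^2$ (the latter reducing to the former via the orientation double cover), because one cannot assume the initial curvature has a sign; this is exactly where Chow's contribution is needed, combining a modified entropy functional with the trace Harnack inequality for surfaces to force the curvature to become positive, whereupon Hamilton's pinching estimates drive the normalised metric to the round sphere.

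I expect the crux to be precisely this last case: obtaining the uniform upper curvature bound and the eventual positivity of $K$ on $\Sph^2$ with no initial sign hypothesis is the technical heart of the Hamilton--Chow theory and lies beyond the reach of the maximum principle alone.
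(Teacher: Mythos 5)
The paper does not prove this theorem: it is quoted as a classical result of Hamilton \cite{Ham88} and Chow \cite{Cho91}, so there is no internal proof to compare against. Your outline is a faithful summary of their argument --- the Gauss--Bonnet computation $\ddt\Vol_{g(t)}\Mf=-4\pi\chi(\Mf)$ pinning down $T$ exactly, the evolution $\pddt R=\lap R+R^2$ giving the lower curvature bound by ODE comparison, and the normalised flow plus entropy/Harnack machinery for the upper bound and the asymptotics --- but, as you yourself acknowledge, it is a roadmap rather than a proof, since the genuinely hard steps (Hamilton's entropy monotonicity, the trace Harnack inequality on surfaces, and Chow's argument that $K$ eventually becomes positive on $\Sph^2$ without any initial sign assumption) are invoked by name rather than established.
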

Although our result proves that the instantaneously complete 
Ricci flow will 
always uniformise a manifold of hyperbolic type, irrespective
of whether it is complete or not, and 
Theorem \ref{thm:exist-unique-closed-surface} 
proves that it does the same in the spherical 
case, the asymptotic behaviour is more involved on the remaining
manifolds whose universal cover is conformally $\C$.
Nevertheless, the techniques of this paper can be applied to that
case, for example to address the conjecture in \cite[\S 1]{IJ09}.

Theorem \ref{thm:2d-existence} asserts not just the existence 
but also the uniqueness of the given Ricci flow, although
only within the class of maximally stretched solutions.
In fact, the message of \cite{Top10} is that uniqueness
should hold in the much more general class in which 
this maximally stretched condition is dropped:

\begin{conj} (Topping \cite{Top10}.)
  \label{uniqueness_conj}
  The solution of Theorem \ref{thm:2d-existence} is \emph{unique}
  within the class of instantaneously complete Ricci flows 
  $g(t)$ with $g(0)=g_0$.
\end{conj}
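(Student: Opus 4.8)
The plan is to reduce this purely geometric uniqueness statement to a uniqueness statement for a scalar parabolic PDE, and to exploit that we already control one direction of the inequality for free. In two dimensions the Ricci flow is conformal, so writing $g(t)=e^{2u}g_0$ and $\tilde g(t)=e^{2\tilde u}g_0$ with $K_0:=K[g_0]$, both conformal factors solve the logarithmic fast diffusion equation
\[ \pddt u = e^{-2u}\bigl(\Delta_{g_0}u - K_0\bigr), \]
subject to $u(0)=\tilde u(0)=0$. Because the solution $g(t)$ of Theorem \ref{thm:2d-existence} is maximally stretched (property 3) and $\tilde g(0)=g_0=g(0)$, Theorem \ref{ICexistence1} immediately gives $\tilde g(t)\le g(t)$, i.e.\ $\tilde u\le u$, on $[0,\min\{T,\tilde T\})$, and in particular $\tilde T\le T$. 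The entire content of the conjecture is therefore the \emph{reverse} inequality $\tilde u\ge u$: one must show that no strictly \emph{smaller} instantaneously complete solution can exist.

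The key point to extract is that the instantaneous completeness of $\tilde g$ is a strong one-sided constraint near the conformal ends of $\Mf$. After lifting to the universal cover and using equivariance it suffices to treat the simply connected models $\C$ and the disk $\Ball\cong\Hbb$, the $\Sph^2$ case being covered by the compact theory of Theorem \ref{thm:exist-unique-closed-surface}. Near an end -- a neighbourhood of a puncture or of infinity, conformally a punctured disk -- completeness of $\tilde g=\tilde\rho\,|\dz|^2$ at a fixed time $t>0$ means $\int\sqrt{\tilde\rho}\,\dr=\infty$ along radial paths into the end, which forbids $\tilde\rho$ from being too small there. I would encode this by constructing, on each model end, explicit \emph{complete} Ricci flows to serve as lower barriers -- for instance a hyperbolic-cusp flow or the explicit instantaneously complete solutions of \cite{Top10} on the punctured disk -- whose conformal factor blows up at the end at the slowest rate compatible with completeness. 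Completeness of $\tilde g$ would then allow one to position such a barrier below $\tilde g$ in a collar of each end, for every $t>0$.

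To turn these local lower bounds into the global inequality $\tilde u\ge u$, I would realise the maximally stretched flow $g$ as the monotone limit of Hamilton--Shi flows $g_j$ on a compact exhaustion $\Omega_1\subset\Omega_2\subset\cdots$ of $\Mf$, with complete (blowing-up) boundary data on $\partial\Omega_j$ -- this is precisely the construction underlying Theorem \ref{thm:2d-existence}. On each $\Omega_j$ the competitor $\tilde g$ is a smooth flow, and near $\partial\Omega_j$ its completeness makes it dominate the boundary data of $g_j$; the comparison (maximum) principle for the conformal-factor equation on the compact domain $\Omega_j$ would then yield $\tilde g\ge g_j$ on $\Omega_j$ for every $t$. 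Letting $j\to\infty$ gives $\tilde g\ge g$, and combined with $\tilde g\le g$ this forces $\tilde g=g$ and hence $\tilde T=T$.

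The main obstacle -- and the reason this is posed as a conjecture rather than a theorem -- is making the comparison argument rigorous in the absence of \emph{any} a priori curvature or completeness bound at $t=0$, and in the face of the degeneracy of the logarithmic fast diffusion equation, whose solutions are notorious for losing mass through the ends. One must establish the precise asymptotics of a general complete solution near each end, \emph{uniformly} as $t\downarrow 0$, and rule out the possibility that an instantaneously complete $\tilde g$ remains complete while slipping strictly below every lower barrier. Controlling this boundary behaviour, so that completeness is shown to be compatible \emph{only} with the maximal solution, is the crux on which the whole argument rests.
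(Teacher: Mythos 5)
First, be aware of what you are proving: the statement is posed in the paper as a \emph{conjecture}, and the paper does not prove it. It establishes only two partial results --- Theorem \ref{not_hyp_unique} (full uniqueness when $(\Mf,g_0)$ is not conformally equivalent to a hyperbolic surface, proved via the lower barrier of Theorem \ref{thm:lower-barrier-C} together with the Rodriguez--Vazquez--Esteban comparison principle, Theorem \ref{thm:comparison-C}), and Theorem \ref{unfinished} (the hyperbolic case under the extra hypotheses $g_i(0)\le MH$ and an upper curvature bound on a short initial time interval). Your treatment of the easy direction ($\tilde g\le g$ from the maximally stretched property, which incidentally comes from property 3 of Theorem \ref{thm:2d-existence}, not from Theorem \ref{ICexistence1}) and your reduction to the reverse inequality are correct, and your idea of extracting a lower barrier near the ends from instantaneous completeness is indeed the paper's mechanism in the plane case; note, however, that there the bound is obtained quantitatively from Chen's estimate $K[\tilde g(t)]\ge -\frac1{2t}$ (Corollary \ref{cor:chen-lower-curv-bd-inst-complete}) combined with Yau's Schwarz Lemma, not from completeness alone --- infinite radial length does not by itself yield the pointwise lower bound you need.

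The concrete flaw is in your final step, the global comparison by compact exhaustion. You claim that near $\partial\Omega_j$ the completeness of $\tilde g$ ``makes it dominate the boundary data of $g_j$.'' This is backwards. Completeness of $\tilde g$ is a condition at the ends of $\Mf$, whereas $\partial\Omega_j$ is an interior hypersurface of $\Mf$, across which $\tilde g(t)$ is smooth and finite; by contrast $g_j(t)$ is complete on $\Omega_j$ for $t>0$ and therefore blows up at $\partial\Omega_j$. Hence near $\partial\Omega_j$ one has $g_j(t)\ge\tilde g(t)$, the opposite of what your comparison requires, and the compact-domain maximum principle (Theorem \ref{thm:direct-comp-principle}) can only ever deliver $\tilde g\le g_j$, i.e.\ it reproves the direction you already have for free. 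This is exactly why the conjecture is hard: the maximal solution is constructed to dominate at artificial boundaries, so no exhaustion argument can trap a competitor from below; what is needed is a comparison principle on all of $\Mf$ that runs on one-sided information, namely a lower barrier for $\tilde g$ at the genuine ends. Such a principle exists on $\C$ (Theorem \ref{thm:comparison-C}), which is what yields Theorem \ref{not_hyp_unique}, but no analogue is known in the hyperbolic case: there the barrier $(2t)H\le\tilde g(t)$ of Lemma \ref{lemma:barriers}(i) degenerates as $t\downarrow0$, and closing the argument across $t=0$ --- as you yourself acknowledge in your last paragraph --- is precisely the open problem, not a detail that can be deferred.
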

This conjecture was partially resolved in \cite{GT10}, and
in this paper we make further progress. Precisely, we have
the following two partial results:
\begin{thm}
\label{not_hyp_unique}
In the setting of Theorem \ref{thm:2d-existence}, if
$(\Mf,g_0)$ is \emph{not} conformally equivalent to
any hyperbolic surface and 
$\bigl( \tilde g(t)\bigr)_{t\in[0,\tilde T)}$ is any 
instantaneously complete Ricci flow
with $\tilde g(0)=g_0$, then $\tilde T\leq T$ and 
$\tilde g(t)=g(t)$ for all $t\in [0,\tilde T)$.
\end{thm}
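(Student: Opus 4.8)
The plan is to reduce everything to a scalar comparison for conformal factors and then to exploit the parabolic conformal type that is forced by the hypothesis. First I would dispose of the compact cases: a surface that is conformally $\mathcal S^2$, $\mathbb R\!P^2$, a torus or a Klein bottle is closed, so $g(t)$ and any competitor $\tilde g(t)$ are automatically complete and, by Theorem \ref{thm:exist-unique-closed-surface}, must both equal the unique flow on a closed surface; here instantaneous completeness is vacuous and uniqueness is immediate. The remaining non-hyperbolic surfaces are conformally $\C$ or conformally the cylinder $\C\setminus\{0\}$, i.e.\ exactly the non-compact surfaces of parabolic conformal type. Fixing a complete flat background metric $g_\flat$ in this conformal class, I would write $g(t)=e^{2u(t)}g_\flat$ and $\tilde g(t)=e^{2\tilde u(t)}g_\flat$, so that both conformal factors solve the logarithmic fast-diffusion equation $\partial_t u = e^{-2u}\Delta_{g_\flat} u$ with the same datum $u(0)=\tilde u(0)$, and instantaneous completeness becomes the statement that $e^{2u(t)}g_\flat$ (resp.\ $e^{2\tilde u(t)}g_\flat$) is complete for every $t>0$.

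Second, the maximally stretched property of $g(t)$ from Theorem \ref{thm:2d-existence}, applied to the competitor (whose initial metric satisfies $\tilde g(0)=g_0\le g(0)$), immediately yields the one-sided bound $\tilde g(t)\le g(t)$, hence $\tilde T\le T$ and $\phi:=u-\tilde u\ge 0$ with $\phi(0)\equiv 0$. Everything therefore rests on the reverse inequality $\phi\le 0$.

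Third, for the reverse inequality I would subtract the two evolution equations. Using $\Delta_{g_\flat}\tilde u=-e^{2\tilde u}K[\tilde g]$, this produces a \emph{linear} parabolic equation for the nonnegative quantity $\phi$,
\[ \partial_t\phi = e^{-2u}\,\Delta_{g_\flat}\phi + \bigl(1-e^{-2\phi}\bigr)K[\tilde g], \]
with $\phi\ge 0$ and $\phi(0)\equiv 0$, the goal being $\phi\equiv 0$. Equivalently, one can track the excess area density $v-\tilde v\ge 0$, where $v=e^{2u}$ solves $\partial_t v=\Delta_{g_\flat}\log v$, and show that $\tfrac{d}{dt}\int_{\Mf}(v-\tilde v)\,d\mu_{g_\flat}\le 0$; since this functional is nonnegative and vanishes at $t=0$, it then stays zero, forcing $g(t)=\tilde g(t)$. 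Both formulations reduce to controlling the flux of $\log(v/\tilde v)=2\phi\ge 0$ across the conformal infinity, and this is precisely where the hypothesis enters: on a surface of parabolic conformal type Brownian motion is recurrent, so the relevant boundary flux at infinity of a nonnegative function is nonpositive and no nontrivial nonnegative excess can be sustained. The completeness of \emph{both} flows for $t>0$ is what confines $\phi$ (equivalently $\log(v/\tilde v)$) sufficiently at infinity for this flux computation — and for the maximum principle on the complete recurrent background — to be legitimate.

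I expect the flux-at-infinity / Liouville step to be the main obstacle. The zeroth-order term $(1-e^{-2\phi})K[\tilde g]$ carries no favourable sign and $K[\tilde g]$ may be unbounded, so a naive maximum principle fails; the substance of the argument is to show, using recurrence of the parabolic background together with the completeness of $\tilde g$, that a strictly positive $\phi$ concentrated near infinity cannot occur. This is exactly the step that breaks down in the hyperbolic case, where the conformal boundary leaves genuine room at infinity for distinct instantaneously complete flows, so that uniqueness there requires the separate and more delicate analysis referenced via \cite{GT10} and recorded as Conjecture \ref{uniqueness_conj}.
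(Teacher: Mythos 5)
Your reduction to the compact cases and your use of the maximally stretched property to get $\tilde g(t)\le g(t)$ (hence $\tilde T\le T$, since the volume of $g(t)$ tends to zero at $t=T$) both match the paper. The gap is in the reverse inequality, and it is exactly the step you flag as ``the main obstacle'': the assertion that on a parabolic surface the boundary flux at infinity of a nonnegative excess is nonpositive is not true for this equation without further input. The plane is as recurrent as a surface can be, and yet the Cauchy problem for $\partial_t v=\Delta\log v$ on $\R^2$ is extremely nonunique precisely because solutions can lose area through spatial infinity at a finite rate; this is the ``extreme nonuniqueness'' alluded to in the introduction, and it shows that parabolicity of the conformal type cannot by itself rule out a nontrivial nonnegative $\phi$. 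Your linear equation for $\phi$ is correctly derived, but, as you yourself observe, the zeroth-order coefficient $K[\tilde g]$ is unsigned and possibly unbounded, so nothing in your setup actually closes the argument.

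What the paper does instead is convert instantaneous completeness of the competitor into a \emph{quantitative} lower barrier at spatial infinity: Theorem \ref{thm:lower-barrier-C} shows that any instantaneously complete flow on $\C$ satisfies $u(t,z)\ge -C-\log\bigl(|z|\log|z|\bigr)+\frac12\log(2t)$ for $|z|\ge 2$, by combining Chen's a priori bound $K\ge -\frac1{2t}$ (Corollary \ref{cor:chen-lower-curv-bd-inst-complete}) with Yau's Schwarz lemma applied to an interpolation between $u$ and the complete hyperbolic metric on $\C\setminus\overline{\Disc}$. This decay rate is precisely the hypothesis \eqref{eq:comp-C-decay-cond} of the Rodriguez--Vazquez--Esteban comparison principle (Theorem \ref{thm:comparison-C}), which then yields $g(t)\le\tilde g(t)$ (Corollary \ref{cor:comparison-principle-C}); for surfaces properly covered by $\C$ (you should also include the open M\"obius band alongside the cylinder) one first lifts both flows to the universal cover and quotients back. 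So the missing idea in your proposal is the barrier of Theorem \ref{thm:lower-barrier-C}: it is the mechanism by which completeness of $\tilde g(t)$ for $t>0$ controls the behaviour at infinity, and without it the recurrence heuristic does not produce a proof.
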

We stress that no curvature assumption is made on the 
competitor $\tilde g(t)$ in the theorem above.
The theorem can be compared to the results of Chen which would imply
this type of strong uniqueness 
in the case that $(\m,g_0)$ is complete, of bounded curvature
and with controlled geometry \cite{Che09}.

The following can be viewed as a generalisation of a result
from \cite{GT10}.
\begin{thm}
  \label{unfinished}
  Let $\bigl(\Mf^2,H\bigr)$ be a complete hyperbolic surface.
  Suppose $\bigl(g_1(t)\bigr)_{t\in[0,T_1]}$ and
  $\bigl(g_2(t)\bigr)_{t\in[0,T_2]}$ are two instantaneously complete Ricci
  flows on $\Mf$ which are conformally equivalent to $H$, with
  \begin{compactenum}[(i)]
  \item $g_1(0)=g_2(0)$;
  \item there exists $M>0$ such that $g_i(0)\le MH$;
  \item there exists $\varepsilon\in\bigl(0,\min\{T_1,T_2\}\bigr]$ such that the
    curvature of each $g_i(t)$ 
    is bounded above for a short time interval $[0,\varepsilon]$.
  \end{compactenum}
  Then $g_1(t)=g_2(t)$ for all $t\in\bigl[0,\min\{T_1,T_2\}\bigr]$. 
\end{thm}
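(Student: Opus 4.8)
The plan is to reduce the uniqueness question to a scalar comparison principle for the conformal factor, and then to defeat the non-compactness of $\Mf$ by showing that the boundary terms at infinity vanish; controlling those terms is the main obstacle, and it is exactly where hypotheses (ii) and (iii) are used. Since each $g_i(t)$ is conformal to $H$, I would write $g_i(t)=u_i(t)\,H$ with $u_i>0$. On a surface $\Ric=Kg$, so the Ricci flow equation $\partial_t g=-2Kg$ becomes, after a standard computation using $K[H]\equiv-1$, the logarithmic fast-diffusion equation
\[ \partial_t u_i = \Delta_H \log u_i + 2, \]
where $\Delta_H$ is the Laplace--Beltrami operator of $H$. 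By (i) the two conformal factors share the initial datum $u_1(\cdot,0)=u_2(\cdot,0)=:u_0$, and by (ii) we have $u_0\le M$.

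Next I would install barriers. The spatially constant function $u=2t+M$ solves the scalar equation, so $(2t+M)H$ is a complete, bounded-curvature Ricci flow starting from $MH\ge g_i(0)$; by the uniqueness and the asymptotic upper bound of Theorem \ref{thm:2d-existence} it coincides with the maximal flow emanating from $MH$ and therefore dominates every flow with smaller initial datum. In particular $u_i\le 2t+M$ for both $i$ and all $t$ in the common interval. On the other hand, hypothesis (iii) gives $K[g_i]\le K_{\max}$ on $[0,\varepsilon]$, and integrating $\partial_t\log u_i=-2K_i\ge-2K_{\max}$ yields the lower barrier $u_i(t)\ge u_0\,e^{-2K_{\max}t}$ there. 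Thus on $[0,\varepsilon]$ both factors are trapped between the same pair of barriers, which pins down their behaviour as $t\searrow0$.

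The heart of the argument is an $L^1$-type contraction for the scalar equation. Subtracting the two equations gives $\partial_t(u_1-u_2)=\Delta_H(\log u_1-\log u_2)$, and since $\log$ is increasing the quantities $u_1-u_2$ and $\log u_1-\log u_2$ have the same sign pointwise. Testing against a smoothed $\operatorname{sign}_+(u_1-u_2)$ and integrating against $\dmu_H$, Kato's inequality formally produces
\[ \ddt \int_\Mf (u_1-u_2)_+\,\dmu_H \le 0, \]
which together with $u_1(\cdot,0)=u_2(\cdot,0)$ forces $u_1\le u_2$; exchanging the indices then gives $u_1=u_2$, i.e. $g_1=g_2$, on $[0,\varepsilon]$. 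The difficulty is that $\Mf$ is non-compact, so this integration by parts leaves a boundary term at infinity. I would insert a family of Lipschitz cut-offs $\chi_R$ adapted to the exponential volume growth of $(\Mf,H)$ and bound the error by $\int_\Mf|\nabla\chi_R|\,\bigl|\log u_1-\log u_2\bigr|\,\dmu_H$; the two-sided barriers of the previous step bound $|\log u_i|$ and supply exactly the integrability and decay needed for this term to vanish as $R\to\infty$. This is the crux, and the precise reason the metric bound (ii) and the curvature bound (iii) are imposed.

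Finally I would upgrade agreement on $[0,\varepsilon]$ to agreement on all of $[0,\min\{T_1,T_2\}]$, despite (iii) being assumed only on a short interval. Once $g_1\equiv g_2$ on $[0,\varepsilon]$ the flows share the complete metric $g_1(\varepsilon)=g_2(\varepsilon)$, whose curvature is bounded above by (iii) at $t=\varepsilon$. This short-time upper bound regenerates on a further interval by propagating it forward through the scalar evolution $\partial_t R=\Delta R+R^2$ with the maximum principle, using the positive-time lower curvature bound to furnish the controlled geometry that the principle requires. Re-running the comparison then extends the equality beyond $\varepsilon$, and a continuity argument on the (relatively open and closed) set of times at which the flows agree pushes it to $t=\min\{T_1,T_2\}$. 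This step is routine compared with the boundary-term analysis, where the genuine difficulty lies.
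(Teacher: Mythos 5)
Your route is genuinely different from the paper's, which does not touch the PDE for the conformal factor directly: the paper extends each $g_i$ to $[0,\infty)$ with a uniform upper curvature bound via Corollary \ref{cor:long-time-existence-disc} (this is exactly where (ii) and (iii) enter), invokes Chen's bound $K[g_i(t)]\ge-\frac1{2t}$ (Corollary \ref{cor:chen-lower-curv-bd-inst-complete}), and then sandwiches each $g_i$ against the maximally stretched solution of Theorem \ref{thm:exist-D} using the geometric comparison principle Theorem \ref{thm:geom-comp-principle}, as in \cite[Theorem 4.1]{GT10}. Unfortunately your alternative has a genuine gap precisely at the step you identify as the crux. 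On a complete hyperbolic surface the volume of the annulus $\gBall_H(p;2R)\setminus\gBall_H(p;R)$ grows like $\ee^{2R}$, whereas the barriers you establish give only a spatially \emph{uniform} bound on $|\log u_1-\log u_2|$ for $t>0$ (namely $0\le u_i-2t$ and $u_i\le 2t+M$, so $|\log u_1-\log u_2|\le\log\frac{2t+M}{2t}$), with no spatial decay at all. Hence the error term $\int_\Mf|\nabla\chi_R|_H\,|\log u_1-\log u_2|\dmu_H$ is of order $\ee^{2R}/R$ and does not vanish as $R\to\infty$; the barriers do not supply "exactly the integrability and decay needed." Worse, if $\Mf$ has infinite hyperbolic area then $\int_\Mf(u_1-u_2)_+\dmu_H$ need not even be finite, since each $u_i$ is bounded below by $2t$. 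An $L^1$-contraction of this type is viable on $\C$ with polynomial volume growth (this is essentially the Rodriguez--Vazquez--Esteban principle, Theorem \ref{thm:comparison-C}, which the paper uses only in the parabolic case), but on a hyperbolic surface it requires a new ingredient (a weight, or exponential decay of the difference) that you have not provided.

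A secondary problem is the continuation step. The upper curvature bound does \emph{not} propagate forward by the maximum principle applied to $\pddt R=\Delta R+R^2$: the reaction term has the wrong sign for upper bounds, so an upper bound $\kappa_0$ at time $\varepsilon$ is only guaranteed (by ODE comparison) up to time $\varepsilon+\kappa_0^{-1}$, and even that application of the maximum principle on a noncompact surface presupposes the bounded geometry you are trying to establish. The paper avoids this by deriving, from hypothesis (ii) alone, the for-all-positive-time two-sided bound $|K[g_i(t)]|\le B/t$ of Proposition \ref{prop:uniform-curvature-bound} (via the $C^k$ estimates of Theorem \ref{thm:Ck-uniform-conv-disc}), and uses (iii) only to control the curvature near $t=0$; the open-closed argument you sketch is then unnecessary because the comparison with the maximally stretched solution is carried out on the whole interval at once.
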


Since the complete uniqueness conjecture has not been 
fully resolved above in the case that the initial metric is
conformally equivalent to some complete hyperbolic metric, 
it is apriori
conceivable that in the case that $(\m,g_0)$ is complete
and of bounded curvature, the Ricci flow we construct in
Theorem \ref{thm:2d-existence} could be different from the
standard Hamilton-Shi solution of Theorem \ref{shihamilton}. 
We rule out this possibility in the following theorem.
\begin{thm}
\label{thm:no-weird-solution}
Let $\bigl(\Mf^2,g_0\bigr)$ be a complete Riemannian surface with 
bounded curvature, and let $\bigl(g(t)\bigr)_{t\in[0,T)}$ be the 
corresponding solution constructed in Theorem \ref{thm:2d-existence}.
Then $g(t)$ agrees with the Hamilton-Shi Ricci flow of 
Theorem \ref{shihamilton} as long as the latter flow exists.
\end{thm}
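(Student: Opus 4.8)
The plan is to prove the two flows coincide by a two-sided comparison. Write $\ghat(t)$ for the Hamilton-Shi flow of Theorem \ref{shihamilton}, defined on its maximal interval $[0,\hat T)$ with $\ghat(0)=g_0$, complete and of bounded curvature, and let $g(t)$ be the flow of Theorem \ref{thm:2d-existence}. Since a Ricci flow on a surface stays in its conformal class, both $\ghat$ and $g$ are conformal to $g_0$. The easy inclusion is immediate from maximal stretching: $\ghat$ is itself a Ricci flow with $\ghat(0)=g_0\le g(0)$, so property 3 of Theorem \ref{thm:2d-existence} (in the sense made precise in Theorem \ref{ICexistence1}) gives $\ghat(t)\le g(t)$ on $[0,\min\{\hat T,T\})$. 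As $\ghat(t)$ is complete for every $t\in[0,\hat T)$ it is in particular instantaneously complete, so it remains only to prove the reverse inequality $g(t)\le\ghat(t)$, i.e.\ equality.

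For the reverse inequality I would split according to the conformal type of $(\Mf,g_0)$. If $(\Mf,g_0)$ is \emph{not} conformally equivalent to a complete hyperbolic surface, then $\ghat$ is an instantaneously complete Ricci flow with $\ghat(0)=g_0$, and Theorem \ref{not_hyp_unique} applies verbatim --- no curvature hypothesis on the competitor is required --- to give $\hat T\le T$ and $\ghat(t)=g(t)$ throughout. This case is therefore immediate, and the whole difficulty is concentrated in the hyperbolic case.

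In the hyperbolic case, let $H$ be the complete hyperbolic metric conformal to $g_0$ and attempt to invoke Theorem \ref{unfinished} with $g_1=g$ and $g_2=\ghat$. Hypotheses (i) and (iii) hold at once: the two flows share initial data, $g$ has curvature bounded above by Theorem \ref{thm:2d-existence}, and $\ghat$ has two-sided curvature bounds, so both are bounded above on a short interval $[0,\varepsilon]$. The obstruction is hypothesis (ii), the global bound $g_0\le MH$. This is \emph{not} automatic: writing $g_0=e^{2\rho}H$, completeness together with bounded curvature only forces $\rho$ to be bounded \emph{below} (by Yau's Schwarz lemma, using the lower bound $K[g_0]\ge -A$), and one can exhibit complete, bounded-curvature metrics conformal to $H$ with $\rho\to+\infty$ for which $g_0\le MH$ fails. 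Removing hypothesis (ii) is thus the main obstacle.

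To overcome it I would approximate $g_0$ monotonically from below. Take $g_0^{(k)}$ to be a smoothing of the capped metric $\min\{g_0,\,M_kH\}$ (comparing conformal factors), with $M_k\to\infty$; each $g_0^{(k)}$ is complete, conformal to $H$, satisfies $g_0^{(k)}\le M_kH$ and has bounded curvature, and $g_0^{(k)}\nearrow g_0$, coinciding with $g_0$ on larger and larger compact sets. Applying Theorem \ref{unfinished} to the two flows $g^{(k)},\ghat^{(k)}$ issuing from $g_0^{(k)}$ yields $g^{(k)}(t)=\ghat^{(k)}(t)$ for every $k$, and it then remains to pass to the limit. On the right, $\ghat^{(k)}\to\ghat$ by the continuous dependence and interior estimates for complete bounded-curvature flows underlying Theorem \ref{shihamilton}, since the initial data eventually agree on every compact set; on the left, $g^{(k)}\nearrow g$ because maximal stretching is monotone in the initial metric (if $g_0^{(k)}\le g_0^{(k+1)}$ then $g^{(k)}\le g^{(k+1)}\le g$) and the flow of Theorem \ref{thm:2d-existence} is precisely the increasing limit of flows from initial data approximating $g_0$ from below, as in \cite{Top10}. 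Combining the two limits gives $g=\ghat$. The delicate points --- and the heart of the argument --- are these two convergence statements, in particular the identification of the monotone limit $\lim_k g^{(k)}$ with the maximally stretched flow $g$, together with the routine but necessary verification that $\min\{g_0,M_kH\}$ can be smoothed while keeping each $g_0^{(k)}$ complete with bounded curvature.
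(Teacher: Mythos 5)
Your first half is fine: maximal stretching gives $\ghat(t)\le g(t)$, the non-hyperbolic case follows from Theorem \ref{not_hyp_unique}, and you are right that $g_0\le MH$ can fail for a complete bounded-curvature metric, so Theorem \ref{unfinished} does not apply directly. But the approximation scheme you propose for the hyperbolic case has a genuine gap at exactly the point you flag as its heart: the identification $\lim_k g^{(k)}=g$. Monotonicity and $g_0^{(k)}\le g_0$ only give an increasing limit $g^\infty\le g$ which is an instantaneously complete Ricci flow starting at $g_0$; nothing forces $g^\infty\ge g$, because the maximal-stretching property of each $g^{(k)}$ is relative to the smaller initial datum $g_0^{(k)}$ and does not pass to the limit, and the paper's construction of $g$ is a \emph{decreasing} limit over an exhaustion by subdomains, not an increasing limit over capped initial data. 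Concluding $g^\infty=g$ would essentially require the uniqueness Conjecture \ref{uniqueness_conj}, which is open precisely in the hyperbolic case. The companion claim $\ghat^{(k)}\to\ghat$ is likewise not covered by any stated result and would need curvature bounds on the smoothed caps that are uniform in $k$, which is not routine for a mollified minimum of conformal factors.

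The paper avoids all of this with a much shorter argument that you miss: since $g_0$ has bounded curvature, Theorem \ref{ICexistence1} provides a maximally stretched flow with curvature uniformly bounded \emph{above}, and by uniqueness of maximally stretched solutions (Remark \ref{rmk:uniqueness-maximal-stretched}) this coincides with $g(t)$; meanwhile $g(t)$ is complete for every $t$ (it is instantaneously complete and $g_0$ is complete), so Chen's estimate (Theorem \ref{thm:chen-lower-curv-bd}) supplies a uniform lower curvature bound. Hence $g(t)$ is itself a complete Ricci flow of bounded curvature, and the two-sided uniqueness theorem for such flows on surfaces (Theorem \ref{thm:uniqueness-complete-curv-bd}) identifies it with the Hamilton--Shi flow in one step, with no case split and no approximation. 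If you want to salvage your outline, replace the whole hyperbolic-case construction by this observation.
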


The combination of Theorem \ref{thm:2d-existence} and Theorem
\ref{thm:no-weird-solution} generalises a number of other
results which have appeared recently, proved with different
techniques:
\begin{itemize}
\item In the special case that the initial surface $\bigl(\Mf,g_0\bigr)$ is
  complete, topologically finite, with negative Euler characteristic
  $\chi(\Mf)<0$, and on each end of $\Mf$ the initial metric $g_0$ is
  asymptotic to a multiple of a hyperbolic cusp metric \cite{JMS09} or of a
  funnel metric \cite{AAR09}, Ji-Mazzeo-Sesum or Albin-Aldana-Rochon
  resp.~show existence and smooth uniform convergence
  of the normalised Ricci flow to the unique complete metric of constant
  curvature in the conformal class of $g_0$. Since the number of cusp or
  funnel ends is finite and their complement is compact we observe that there
  exists a conformally equivalent metric $H$ of constant negative curvature
  with $H\ge g_0$ and we may apply alternatively Theorem \ref{thm:2d-existence}.

\item If $\bigl(\Mf^2,g_0\bigr)$ is a complete Riemannian surface with
  asymptotically conical ends and negative Euler characteristic $\chi(\Mf)<0$,
  then Isenberg-Mazzeo-Sesum show in \cite{IMS10} the existence of a Ricci
  flow $g(t)$ on $\m$ for all $t\in [0,\infty)$, with 
  $g(0)=g_0$, and smooth local convergence
  of the rescaled flow $t^{-1}g(t)$ to a complete metric of
  constant negative curvature and finite area in the conformal class of
  $g_0$. This is a special case of
  Theorem \ref{thm:2d-existence}.
  
\item For an initial metric $g_0$ on the disc $\Disc$ which is bounded above and below by positive multiples of the
  complete hyperbolic metric $H$, Schn\"urer-Schulze-Simon show 
  existence and
  smooth uniform convergence of the normalised Ricci
  flow to $H$ in \cite{SSS10}. Theorem \ref{thm:2d-existence}
  implies that the metrical equivalence can be weakened to $g_0 \le
  MH$ for some $M>0$. Moreover, without this condition we still have
  smooth local  convergence. 
\end{itemize}

The article is organised as follows: After summarising some of the specific
properties of the Ricci flow in two dimensions in the following paragraph,
we show in Section \ref{sect:hyp-upper-barrier} several apriori estimates for
instantaneously complete Ricci flows which have initially a multiple of a
hyperbolic metric as an upper barrier. These include sharp barriers (above and
below) at later times, the smooth convergence of the rescaled flow,
curvature estimates and thus long time existence. The main
ingredients here come from Chen's very general apriori estimate for the scalar 
curvature of a complete Ricci flow \cite{Che09} and Yau's version of the
Schwarz Lemma \cite{Yau73}. In Section
\ref{subsect:ex-disc} we exploit these properties to improve the existence
result from \cite{Top10} (Theorem \ref{ICexistence1}) on the disc to the case
where we might have initially unbounded curvature, by applying the very same
Theorem \ref{ICexistence1} locally where we have both bounded curvature and the
estimates from Section \ref{sect:hyp-upper-barrier}. 
We also prove Theorem \ref{unfinished}.
In Section
\ref{subsect:ex-uniq-C} we prove a lower barrier for Ricci flows on $\mathbb
C$ only requiring the instantaneous completeness of the flow. This barrier is
sufficient to use a comparison principle by Rodriguez-Vazquez-Esteban
\cite{RVE97} and gain uniqueness in this class, leading
to a proof of Theorem \ref{not_hyp_unique}. Finally, in Section
\ref{last_sect}, we bring 
all these ingredients together to prove the Main Theorem 
\ref{thm:2d-existence}.

\subsection*{Ricci flows on surfaces}

Since our results address the two-dimensional case, we briefly
recall some special features of Ricci flows on surfaces.
On a two-dimensional manifold, the Ricci curvature is
simply the Gaussian curvature $K$ times the metric:
$\Ric[g]=K[g]\, g$. The Ricci flow then moves within a fixed conformal
class, and if we pick a local isothermal complex coordinate $z=x+\mathrm{i}y$
and write the metric in terms of a scalar conformal factor $u\in C^\infty(\Mf)$
\[ g=\ee^{2u}|\dz|^2 \]
where $|\dz|^2=\dx^2+\dy^2$, then the evolution of the metric's conformal
factor $u$ under Ricci flow is governed by the nonlinear scalar PDE
\begin{equation}
  \label{eq:ricci-flow-cf}
  \pddt u = \ee^{-2u}\Delta u = -K[u].
\end{equation}
where $\Delta := \frac{\partial^2}{\partial x^2} + \frac{\partial^2}{\partial
  y^2}$ is defined in terms of the local coordinates and we abuse notation
by abbreviating $K[g]$ by $K[u]$.

The definition of a \emph{maximally stretched} Ricci flow can now be viewed 
as a height-maximality of the conformal factor, i.e. $\ee^{2u(t)}|\dz|^2$ is
maximally stretched if and only if we have $u(t)\ge v(t)$ for any other
conformal solution $\ee^{2v(t)}|\dz|^2$ with $u(0)\ge v(0)$. 

There is an independent interest and extensive
literature on \eqref{eq:ricci-flow-cf} which after the change
of variables $v=\ee^{2u}$ is called the \emph{logarithmic fast diffusion equation}:
\begin{equation}
  \label{eq:log-fde}
  \pddt v = \Delta\log v.
\end{equation}
In a physical context it models the evolution of the thickness of a thin
colloidal film spread over a flat surface if the van der Waals forces are
repulsive. For details we refer to \cite{DdP95}, \cite{DD96} and \cite{RVE97}
plus the references therein. 
Virtually all of the literature in this direction considers the equation on $\R^2$,
and we will appeal to some of these results in the case
that the universal cover of the surface is conformally $\mathbb C$.
Hui (e.g. \cite{Hui02}) has considered the problem on bounded domains in $\R^2$.
Typically the logarithmic fast diffusion literature considers solutions with some 
sort of growth condition at infinity, sometimes phrased in terms of membership 
of an $L^p$ space, and in this paper we
are replacing these conditions with the instantaneously complete condition.

\medskip

\emph{Acknowledgements:} Both authors were supported
by The Leverhulme Trust.

\section{Ricci flows with an upper hyperbolic barrier}
\label{sect:hyp-upper-barrier}

In this section we will derive some estimates for Ricci flows
mainly under the assumption that they 
are initially bounded from above by some (possibly large)
multiple of a hyperbolic metric. No curvature assumptions will
be made on the flow at any time, and yet we derive pointwise
and also derivative bounds which will be of fundamental
importance in the later proofs.
In particular, we apply these estimates even when there is no
hyperbolic metric conformally equivalent to the initial metric
of the Ricci flow; the trick will be to restrict to smaller
compactly contained subdomains where such a metric will exist.

\subsection{$C^0$ bounds}

\begin{lemma}
  \label{lemma:barriers}
  Let $\bigl(\Mf^2,H\bigr)$ be
  a complete hyperbolic surface   and 
  let $\bigl(g(t)\bigr)_{t\in[0,T]}$ be a Ricci
  flow on $\m$ which is conformally equivalent to $H$. 
  \begin{compactenum}[(i)]
  \item If $g(t)$ is instantaneously complete, then 
    \begin{equation}
      \label{eq:lower-barrier}
      (2t) H \le g(t) \qquad\text{for all } t\in(0,T].
    \end{equation}
  \item 
    If there exists a constant $M>0$ such that $g(0)\le M H$, 
    then
    \begin{equation}
      \label{eq:upper-barrier}
      g(t) \le \bigl(2t+M\bigr) H \qquad\text{for all }
      t\in[0,T].
    \end{equation}
  \end{compactenum}
\end{lemma}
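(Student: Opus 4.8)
The plan is to work with the conformal factor of $g(t)$ relative to the fixed hyperbolic background $H$. Writing $g(t)=\ee^{2w}H$ and using the conformal transformation rule $K[\ee^{2w}H]=\ee^{-2w}\bigl(K[H]-\Delta_H w\bigr)=\ee^{-2w}\bigl(-1-\Delta_H w\bigr)$, the Ricci flow equation $\pddt g=-2K[g]\,g$ translates into the scalar PDE
\[ \pddt w = -K[g] = \ee^{-2w}\bigl(\Delta_H w + 1\bigr), \]
where $\Delta_H$ is the Laplace--Beltrami operator of the complete metric $H$. The decisive structural observation is that the claimed barriers are themselves exact solutions: a spatially constant conformal factor has $\Delta_H w\equiv 0$, so $\pddt w=\ee^{-2w}$ integrates to $\ee^{2w(t)}=\ee^{2w(0)}+2t$; equivalently $(c+2t)H$ is a genuine Ricci flow for every constant $c\ge 0$. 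The lower barrier is the choice $c=0$ and the upper barrier is $c=M$. Both are complete metrics for $t>0$, and --- crucially --- they are comparisons relative to a background $H$ of \emph{bounded} geometry (constant curvature $-1$, Ricci bounded below), even though $g(t)$ itself is allowed to have unbounded curvature.

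For the upper barrier (ii) I would set $\eta:=w-\tfrac12\log(2t+M)$, so that the hypothesis $g(0)\le MH$ reads $\eta(\cdot,0)\le 0$. Since the barrier is spatially constant, $\Delta_H\eta=\Delta_H w$, and subtracting the two evolution equations gives $\pddt\eta=\ee^{-2w}(\Delta_H\eta+1)-\tfrac1{2t+M}$. At a spatial point where $\eta$ attains a positive maximum one has $\Delta_H\eta\le 0$ and $w>\tfrac12\log(2t+M)$, whence $\ee^{-2w}<\tfrac1{2t+M}$ and therefore $\pddt\eta<0$: a positive maximum is strictly decreasing. To make this rigorous on the non-compact surface $\m$ --- where $\eta$ need not attain its supremum and could a priori be unbounded above --- I would run the argument for the penalised quantity $\eta-\varepsilon\psi$, with $\psi\ge 0$ a proper exhaustion function of controlled $H$-Laplacian (e.g.\ built by smoothing $\dist_H(p,\cdot)$, whose Laplacian is controlled precisely because $H$ is hyperbolic), so that $\eta-\varepsilon\psi\to-\infty$ at infinity and its supremum is attained; after reaching the interior contradiction one lets $\varepsilon\downarrow 0$. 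This is a generalised (Omori--Yau-type) maximum principle applied with respect to the fixed background $H$ rather than the singular metric $g(t)$, which is why no curvature bound on $g(t)$ is needed.

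For the lower barrier (i) the same interior computation, now applied to $w-\tfrac12\log(2t)$ at a spatial minimum, again yields the favourable sign (a negative minimum is strictly increasing, since there $\Delta_H w\ge 0$ and $\ee^{-2w}>\tfrac1{2t}$ force $\pddt\bigl(w-\tfrac12\log 2t\bigr)>0$). The difficulty is that the initial ordering is lost: $2tH\to 0$ as $t\to 0$, so the comparison is degenerate at $t=0$ and there is nothing to propagate from the initial data. Here \textbf{instantaneous completeness of $g(t)$ is the essential replacement for the missing initial comparison}, and this is the main obstacle. Its role is to prevent the relevant infimum from leaking in from spatial infinity: completeness of $g(t)=\ee^{2w}H$ forces $\int \ee^{w}\,\df s_H=\infty$ along every divergent path, which is exactly the growth condition on $w$ at infinity that legitimises the penalised minimum principle and singles out $2tH$ as the \emph{minimal} complete solution. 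Concretely I would fix $t_1\in(0,T]$, suppose $\inf_\m\bigl(\ee^{2w(\cdot,t_1)}-2t_1\bigr)<0$, and derive a contradiction by propagating the minimum-principle inequality while using completeness to guarantee that the spatial infimum is governed by interior behaviour rather than by the end of $\m$, letting the comparison time approach $0$ (where $2tH$ degenerates harmlessly below any instantaneously complete datum). I expect verifying that completeness genuinely rules out the escape-to-infinity scenario --- rather than the two short maximum-principle computations themselves --- to be the technically delicate step.
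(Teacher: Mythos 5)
Your reduction to the scalar PDE and the observation that $(c+2t)H$ is an exact solution are both correct, and your treatment of the upper barrier (ii) is close in spirit to the paper's, which also compares against the explicit solution with conformal factor $\frac12\log(2t+M)$. But your penalisation for (ii) has a real problem: subtracting $\varepsilon\psi$ with $\psi\sim\dist_H$ only forces the supremum of $\eta-\varepsilon\psi$ to be attained if $\eta$ grows sublinearly in $\dist_H$, and you have no a priori growth control on $w$ whatsoever. The paper sidesteps this by lifting to the universal cover $\Disc$ and comparing, on each compactly contained subdisc $\Disc_{1-\delta}$, against the hyperbolic-type barrier $h_\delta$ which tends to $+\infty$ at $\partial\Disc_{1-\delta}$ while $u$ is continuous (hence bounded) on $\overline{\Disc_{1-\delta}}$; the elementary comparison principle (Theorem \ref{thm:direct-comp-principle}) then applies with no growth hypothesis, and one lets $\delta\searrow0$. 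Your argument needs an analogous device; as written the "proper exhaustion" step does not close.

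The more serious gap is in (i), and you have in effect flagged it yourself: the entire difficulty is to convert completeness of $g(t)$ for $t>0$ into control that prevents the infimum of $w-\frac12\log(2t)$ from escaping to spatial infinity, and your proposal does not actually do this. The assertion that completeness "is exactly the growth condition on $w$ at infinity that legitimises the penalised minimum principle" is not substantiated: completeness is an integral condition along divergent paths and does not directly forbid $w$ from dipping arbitrarily low far out; turning it into pointwise information is precisely the hard analytic content here. The paper does not attempt a direct minimum principle at all. Instead it quotes Chen's a priori estimate (Corollary \ref{cor:chen-lower-curv-bd-inst-complete}), which gives $K[g(t)]\ge-\frac1{2t}$ for any complete Ricci flow with no curvature hypotheses --- this is where completeness is genuinely used, via Chen's localised point-picking argument --- and then applies Yau's Schwarz Lemma (Theorem \ref{thm:yau}) to the identity map from $(\Mf,g(t))$ to $(\Mf,H)$, which immediately yields $H\le\frac1{2t}g(t)$. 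Unless you supply a proof of the escape-to-infinity step (which would essentially amount to reproving Chen's estimate), part (i) of your argument is a plan rather than a proof.
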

\begin{proof}
  (i) To establish the lower barrier \eqref{eq:lower-barrier} we use
  Chen's apriori estimate for the scalar curvature (Corollary
  \ref{cor:chen-lower-curv-bd-inst-complete}) to obtain the
  lower curvature bound $-\frac1{2t}\le K[g(t)]$ for all
  $t\in(0,T]$. Yau's Schwarz Lemma (Theorem \ref{thm:yau}) allows us then to
  compare $g(t)$ with  $H$, establishing \eqref{eq:lower-barrier}.  
  (For further details on Yau's result we refer to \cite[\S2]{GT10}.)

  (ii) Without loss of generality we may (possibly after lifting to its
  universal cover) assume $\Mf^2=\Disc$ and write 
  $g(t)=\ee^{2u(t)}|\dz|^2$.  
  To prove the upper barrier, consider for small $\delta>0$, 
  $u\bigr|_{\overline{\Disc_{1-\delta}}}$ and write the conformal factor of
  a complete Ricci flow on the disc of radius $1-\delta$ with Gaussian
  curvature initially $-M^{-1}$ as
  \[ h_\delta(t,z) := \log\frac{2(1-\delta)}{(1-\delta)^2-|z|^2} +
  \frac12\log\bigl(2t+M\bigr). \]
  Note that $u$ is continuous on $[0,T]\times\overline{\Disc_{1-\delta}}$
  and $h_\delta(t,z)\to\infty$ as $z\to\partial\Disc_{1-\delta}$ for all
  $t\in[0,T]$. 
  Also, with this choice of $M$, we have initially
  $u\bigr|_{\Disc_{1-\delta}}(0,\cdot) \le
  h_0\bigr|_{\Disc_{1-\delta}}(0,\cdot)\le h_\delta(0,\cdot)$. 
  Therefore the requirements of an elementary comparison principle for the
  Ricci flow (cf. Theorem \ref{thm:direct-comp-principle}) are fulfilled, and we may deduce that
  $u\bigr|_{\Disc_{1-\delta}}\le h_\delta$ holds throughout
  $[0,T]\times\Disc_{1-\delta}$. Since $h_\delta$ is continuous in $\delta$,
  letting $\delta\searrow0$ yields \eqref{eq:upper-barrier}. 
\end{proof}

\subsection{$C^k$ bounds}

In this section we bootstrap the estimates of the previous 
section to obtain estimates for higher derivatives.

\begin{lemma}
  \label{lemma:Ck-bounds}
  Let $\bigl(\ee^{2u(t)}|\dz|^2\bigr)_{t\in[0,T]}$ be an instantaneously complete
  Ricci flow on the unit disc $\Disc$ and for $r\in(0,1]$ let
  $H_r=\ee^{2h_r}|\dz|^2$ be the complete hyperbolic metric 
  on the disc $\Disc_r$ of radius $r$. If
  $\ee^{2u(0)|_{\Disc_r}}|\dz|^2\le MH_r$ for some $M>0$, then there
  exists for any $\delta\in\bigl(0,\min\{r,T\}\bigr)$ and 
  $k\in\mathbb N_0$ a constant
  $C_k=C_k(k,r,\delta,M)<\infty$ such that for all $t\in[\delta,T]$ 
  \begin{equation}
    \label{eq:Ck-bounds-cf}
    \Bigl\| u(t,\cdot)-\frac12\log(2t)
    \Bigr\|_{C^k(\Disc_{r-\delta},|\dz|^2)} \le C_k.
  \end{equation}
\end{lemma}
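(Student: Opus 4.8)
The plan is to bootstrap from the $C^0$ barriers of Lemma \ref{lemma:barriers} to all higher derivatives by a standard parabolic interior-estimate argument, exploiting the fact that the normalised quantity $w(t) := u(t,\cdot) - \frac12\log(2t)$ satisfies a uniformly parabolic equation on the region of interest once we know it is $C^0$-bounded. First I would apply Lemma \ref{lemma:barriers} with the hyperbolic metric $H_r$ on $\Disc_r$: part (i) gives the lower barrier $(2t)H_r \le \ee^{2u(t)}|\dz|^2$ and part (ii) gives $\ee^{2u(t)}|\dz|^2 \le (2t+M)H_r$, both on $\Disc_r$ for $t\in(0,T]$. Writing these in terms of conformal factors yields
\[
  h_r + \tfrac12\log(2t) \;\le\; u(t,\cdot) \;\le\; h_r + \tfrac12\log(2t+M)
\]
on $\Disc_r$, so that $w(t) := u(t,\cdot)-\frac12\log(2t)$ obeys $h_r \le w(t) \le h_r + \frac12\log\frac{2t+M}{2t}$. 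On the smaller disc $\overline{\Disc_{r-\delta/2}}$ the hyperbolic factor $h_r$ is smooth and bounded, and for $t\ge\delta$ the correction $\frac12\log\frac{2t+M}{2t}$ is bounded; hence $w(t)$ is bounded in $C^0(\Disc_{r-\delta/2})$ uniformly in $t\in[\delta/2,T]$ — this is the $k=0$ case and the starting point for the bootstrap.

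Next I would derive the evolution equation for $w$. Since $\pddt u = \ee^{-2u}\Delta u$ from \eqref{eq:ricci-flow-cf} and $\pddt\bigl(\frac12\log(2t)\bigr)=\frac1{2t}$, we get
\[
  \pddt w = \ee^{-2u}\Delta u - \frac1{2t} = \ee^{-2w}(2t)^{-1}\Delta w - \frac1{2t},
\]
using $\Delta u = \Delta w$ (the subtracted term is spatially constant) and $\ee^{-2u}=(2t)^{-1}\ee^{-2w}$. The key structural point is that the $C^0$ bound on $w$ forces the coefficient $\ee^{-2w}$ to be bounded above and below by positive constants on $[\delta/2,T]\times\Disc_{r-\delta/2}$, so this is a \emph{uniformly parabolic} quasilinear equation of the form $\pddt w = a(w)\Delta w + b(w,t)$ with smooth, nondegenerate coefficients. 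I would then invoke interior Schauder (parabolic) estimates: the $C^0$ bound together with uniform parabolicity yields, via De Giorgi--Nash--Moser, an interior $C^\alpha$ bound; feeding this into Schauder theory gives interior $C^{2+\alpha}$ bounds, and iterating the Schauder estimates (differentiating the equation and using that the coefficients are smooth functions of $w$) produces interior bounds on all derivatives. Each step shrinks the spatial domain and trims the initial time, which is exactly why the constant $C_k$ is allowed to depend on $k,r,\delta,M$; after finitely many shrinking steps all estimates hold on $[\delta,T]\times\Disc_{r-\delta}$, giving \eqref{eq:Ck-bounds-cf}.

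The main obstacle I anticipate is purely bookkeeping rather than conceptual: one must be careful that the parabolic interior estimates genuinely only require the $C^0$ bound on $w$ and uniform parabolicity, with no a priori control on time derivatives or on higher derivatives of the coefficients beyond what $w$ itself supplies — this is standard for quasilinear equations whose nonlinearity $a(w)=\ee^{-2w}$ is a fixed smooth function, but the iteration must be organised so that at stage $k$ the Schauder estimate uses only the $C^{k-1+\alpha}$ bound already established for $w$ and the (then known) regularity of the coefficients. A secondary subtlety is the degeneration of the coefficient near $t=0$: the factor $(2t)^{-1}$ blows up, which is precisely why the estimate is stated only for $t\ge\delta>0$ and why the lower barrier (i) — guaranteeing $w\ge h_r$, hence $\ee^{-2w}$ bounded above — is essential. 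Provided these two points are handled, the conclusion follows from textbook parabolic regularity applied on nested parabolic cylinders.
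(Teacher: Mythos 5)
Your overall strategy --- two-sided $C^0$ barriers for the normalised conformal factor, then interior parabolic regularity on nested cylinders --- is the paper's, but two of your steps are wrong as written. First, you invoke Lemma \ref{lemma:barriers}(i) with $H_r$ on $\Disc_r$ to claim the lower barrier $(2t)H_r\le \ee^{2u(t)}|\dz|^2$ there. Part (i) requires the flow to be instantaneously complete \emph{on the surface carrying the hyperbolic metric}; the restriction of the flow to $\Disc_r$ with $r<1$ is not complete as a metric on $\Disc_r$, so the lemma does not apply, and the claimed barrier is in fact false in general: since $H\le H_r$ on $\Disc_r$ with $H_r/H\to\infty$ towards $\partial\Disc_r$, any flow satisfying $g(t)\le(2t+M)H$ on $\Disc$ (e.g.\ one starting from $g_0\le MH$, via part (ii)) violates $g(t)\ge(2t)H_r$ near $\partial\Disc_r$ for each fixed $t>0$. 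The correct move, and the paper's, is to apply part (i) on the whole disc $\Disc$ with its own hyperbolic metric $H$, giving $u(t,z)\ge\log\frac{2}{1-|z|^2}+\frac12\log(2t)$; this still bounds your $w$ below on $\Disc_{r-\nicefrac\delta2}$, so the $C^0$ step survives after this repair.

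Second, and more substantively, you keep $t$ as the time variable, so your equation is $\pddt w=\frac1{2t}\ee^{-2w}\Delta w-\frac1{2t}$. You address the degeneration as $t\to0$, but the coefficient also decays like $1/t$ as $t\to\infty$: on $[\delta,T]$ the ellipticity constant is only bounded below by something of order $1/T$, so every interior De Giorgi--Nash--Moser/Schauder constant you produce depends on $T$ (equivalently, applying the estimates on backward cylinders $[t-\nicefrac\delta2,t]$ gives constants blowing up as $t\to\infty$, since the equation there looks like $\partial_\tau w=\varepsilon\Delta w+\dots$ with $\varepsilon\sim 1/t$). The lemma asserts $C_k=C_k(k,r,\delta,M)$ \emph{independent of} $T$, and this independence is essential downstream: Theorem \ref{thm:Ck-uniform-conv-disc} converts these bounds into decay rates as $t\to\infty$. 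The paper removes the problem by reparametrising time, $s=\frac12\log(2t)$, after which $v=u-s$ satisfies the autonomous, genuinely uniformly parabolic equation $\pdds v=\ee^{-2v}\Delta v-1=\Div\bigl(\ee^{-2v}\mathrm{D}v\bigr)+2\ee^{-2v}\bigl|\mathrm{D}v\bigr|^2-1$, and interior estimates on cylinders of fixed $s$-length give $T$-independent constants. Your bootstrap goes through essentially verbatim once you make this change of time variable.
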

\begin{proof}
  Lemma \ref{lemma:barriers} provides the following upper and lower bounds for
  $u$,
  \begin{equation}
    \label{eq:existence-disc-barriers}
    \log\frac2{1-|z|^2} + \frac12\log(2t) \le u(t,z)
    \le \log\frac{2r}{r^2-|z|^2} + \frac12\log\bigl(2t+M\bigr)
  \end{equation}
  for all $t\in(0,T]$ and $z\in \Disc_r$.
  Since $\bigl|u\bigr|$ cannot be bounded uniformly away from $t=0$
  independently of $T$, consider the normalised flow $v(t)$
  defined by
  \[ v(t,z) := u(t,z) - s(t)\qquad\text{ where }\qquad
  s(t) := \frac12\log(2t), \]
  which evolves with the new time scale $s$ according to
  \begin{align}
    \label{eq:ev-mod-v}
    \pdds v &= \frac1{s'(t)} \frac{\partial v}{\partial t} 
    = \frac1{s'(t)}\left( \frac{\partial u}{\partial t} - s'(t)\right) 
    \nonumber\\
    &= \frac1{s'(t)}\Bigl(\ee^{-2u}\Delta u -  s'(t)\Bigr) 
    = \frac1{s'(t)}\Bigl(\ee^{-2s}\ee^{-2v}\Delta v -  s'(t)\Bigr) 
    \nonumber\\
    &= \ee^{-2v}\Delta v - 1 = \Div\bigl(\ee^{-2v}\mathrm{D} v\bigr) +
    2\ee^{-2v}\bigl|\mathrm{D}v\bigr|^2 -1.
  \end{align}
  From \eqref{eq:existence-disc-barriers} we get uniform bounds for
  $\bigl|v\bigr|$ away from $t=0$, which are independent of $T$, 
  \begin{equation}
    \label{eq:barriers-v}
    \log\frac2{1-|z|^2} \le v(t,z) \le \log\frac{2r}{r^2-|z|^2}
    + \frac12\log\frac{2t+M}{2t},
  \end{equation}
  for $z\in \Disc_r$.
  Indeed, fixing $\delta\in\bigl(0,\min\{r,T\}\bigr]$, 
  there exists a constant $C=C(M,\delta,r)>0$
  such that 
  \[ \sup_{[\nicefrac\delta2,T]\times\Disc_{r-\nicefrac\delta2}}\bigl|v\bigr|
  \le C<\infty. \]
  Thus the evolution equation \eqref{eq:ev-mod-v} for $v(s)$ is 
  uniformly parabolic on 
  $\bigl[s(\nicefrac\delta2),s(T)\bigr]\times\Disc_{r-\nicefrac\delta2}$, which
  allows us to apply standard 
  parabolic theory (e.g. \cite[Theorem V.1.1]{LSU68} to establish H\"older
  bounds on $v$ and then bootstrap with \cite[Theorem IV.10.1]{LSU68}) 
  to obtain,
  for any $k\in\mathbb N_0$,
   constants $C_k=C_k(k,\delta,C)>0$ such that  
  \[  \bigl\|\,v\,\bigr\|_{C^{k+\frac\alpha2,2k+\alpha}
    \bigl([\delta,T]\times(\Disc_{r-\delta},|\dz|^2)\bigr)} \le C_k \] 
  yielding \eqref{eq:Ck-bounds-cf}.
\end{proof}

\begin{thm}
  \label{thm:Ck-uniform-conv-disc}
  Let $(\m,H)$ be a complete hyperbolic surface, and 
  suppose $\bigl(g(t)\bigr)_{t\in[0,T]}$ is an instantaneously
  complete Ricci flow on $\Mf$
  which is conformally  equivalent  to $H$. If $g(0)\le MH$ for some constant
  $M>0$, then for all $k\in\mathbb N_0$ and any $\eta\in(0,1)$ and
  $\delta\in(0,T)$ (however small) there exists a 
  constant $C=C(k,\eta,\delta,M)<\infty$ such that for all $t\in[\delta,T]$
  there holds 
  \begin{equation}
    \label{eq:Ck-uniform-conv-disc}
    \left\|\, \frac1{2t}g(t) - H\, \right\|_{C^k(\Mf,H)} 
    \le \frac{C}{t^{1-\eta}}.
  \end{equation}
\end{thm}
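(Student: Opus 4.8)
The plan is to reduce everything to the universal cover, where the homogeneity of hyperbolic space can be used to make the interior estimates of Lemma \ref{lemma:Ck-bounds} uniform, and then to interpolate these uniform bounds against a sharp $C^0$ decay coming directly from the barriers of Lemma \ref{lemma:barriers}.

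First I would lift to the universal Riemannian cover $\pi:\Disc\to\Mf$ of the complete hyperbolic surface $(\Mf,H)$. The flow pulls back to an instantaneously complete Ricci flow $\tilde g(t):=\pi^*g(t)$ on $\Disc$, conformal to the complete hyperbolic metric $\tilde H:=\pi^*H$ and still satisfying $\tilde g(0)\le M\tilde H$. As $\pi$ is a local isometry, the pointwise $\tilde H$-covariant derivatives of $\tfrac1{2t}\tilde g(t)-\tilde H$ descend, so
\[ \Bigl\|\tfrac1{2t}g(t)-H\Bigr\|_{C^k(\Mf,H)}=\Bigl\|\tfrac1{2t}\tilde g(t)-\tilde H\Bigr\|_{C^k(\Disc,\tilde H)}, \]
and it suffices to estimate the right-hand side. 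Writing $\tfrac1{2t}\tilde g(t)=\ee^{2w}\tilde H$ defines a scalar function $w$ on $\Disc$ (the logarithm of the conformal ratio), so that $\tfrac1{2t}\tilde g(t)-\tilde H=f\tilde H$ with $f:=\ee^{2w}-1$. Since $\tilde H$ is parallel, $\nabla^j_{\tilde H}(f\tilde H)=(\nabla^j_{\tilde H}f)\otimes\tilde H$, and therefore the norm above equals $\sqrt2\,\|f\|_{C^k(\Disc,\tilde H)}$. The two barriers of Lemma \ref{lemma:barriers} read $\tilde H\le\tfrac1{2t}\tilde g(t)\le(1+\tfrac M{2t})\tilde H$, i.e.\ $0\le w\le\tfrac M{4t}$ and $0\le f\le\tfrac M{2t}$ pointwise. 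This already yields the $C^0$ statement (and the two-sided bound $0\le\tfrac1{2t}g(t)-H\le\tfrac M{2t}H$); the work is in the higher derivatives.

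The main point, and the main obstacle, is to produce interior $C^m$ bounds on $w$ that are uniform over the noncompact disc, since the constant in Lemma \ref{lemma:Ck-bounds} degenerates as the subdomain approaches $\partial\Disc$. I would resolve this by transitivity of the isometry group: for each $p\in\Disc$ pick a M\"obius transformation $\phi_p$ with $\phi_p(0)=p$ that is an isometry of $\tilde H$. Then $\phi_p^*\tilde g(t)$ is again an instantaneously complete Ricci flow on $\Disc$ with $\phi_p^*\tilde g(0)\le M\tilde H$, and on $\Disc_{3/4}$ we have $\tilde H\le H_{3/4}$ by domain monotonicity of the hyperbolic metric, so $\phi_p^*\tilde g(0)\le MH_{3/4}$ there. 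Lemma \ref{lemma:Ck-bounds} (with $r=\tfrac34$, $r-\delta'=\tfrac12$) then bounds the normalised conformal factor of $\phi_p^*\tilde g$ in $C^m(\Disc_{1/2},|\dz|^2)$ by a constant depending only on $m$, $M$ and the time parameter $\delta$, crucially not on $p$. Because $\phi_p$ is an isometry of $\tilde H$, a short computation shows $w\circ\phi_p$ equals this normalised factor minus the fixed smooth hyperbolic factor, whence $\|w\circ\phi_p\|_{C^m(\Disc_{1/2},|\dz|^2)}\le C_m'$ uniformly in $p$ and in $t\in[\delta,T]$.

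It remains to interpolate and transport. On $\Disc_{1/2}$ the metric $\tilde H$ is uniformly comparable to $|\dz|^2$ with bounded derivatives, so the two $C^k$ norms agree up to constants there. Standard interpolation (Gagliardo--Nirenberg / Landau--Kolmogorov) gives
\[ \|w\circ\phi_p\|_{C^k(\Disc_{3/8})}\le C\,\|w\circ\phi_p\|_{C^0(\Disc_{1/2})}^{1-k/m}\,\|w\circ\phi_p\|_{C^m(\Disc_{1/2})}^{k/m}\le C\Bigl(\tfrac M{4t}\Bigr)^{1-k/m}(C_m')^{k/m}, \]
using the uniform $C^0$ decay from the barriers. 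Choosing $m\ge k/\eta$ makes $1-k/m\ge1-\eta$, and since $t\ge\delta$ this is bounded by $C/t^{1-\eta}$. Evaluating at $z=0$ and transporting back by the isometry $\phi_p$ bounds every $|\nabla^j_{\tilde H}w|_{\tilde H}(p)$, $j\le k$, by $C/t^{1-\eta}$ uniformly in $p\in\Disc$; a routine chain-rule estimate for $f=\ee^{2w}-1$ (which vanishes with $w$, and with $w$ uniformly bounded in $C^k$) then upgrades this to $\|f\|_{C^k(\Disc,\tilde H)}\le C/t^{1-\eta}$, which is the assertion.
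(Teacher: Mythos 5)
Your proposal is correct and follows essentially the same route as the paper: lift to the disc, use the homogeneity of the hyperbolic metric to make the interior bounds of Lemma \ref{lemma:Ck-bounds} independent of the base point (the paper re-normalises the covering map so that $\pi(0)=p$, which is the same device as your composition with the isometry $\phi_p$), combine the resulting uniform $C^m$ bounds with the sharp $C^0$ decay from Lemma \ref{lemma:barriers} via interpolation, and transport back using the equivalence of norms near the origin. The only cosmetic difference is that you interpolate the logarithm $w$ of the conformal ratio and then convert to $\ee^{2w}-1$ by a chain-rule estimate, whereas the paper interpolates the quantity $\frac1{2t}\ee^{2(u-h)}-1$ directly.
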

\begin{proof}
  Fix any point $p\in\Mf$, and let $\pi:\Disc\to\Mf$ be a universal
  covering of $\Mf$ with $\pi(0)=p$. 
  Without loss of generality we may then write the pulled back metrics $\pi^*g(t)=\ee^{2u(t)}|\dz|^2$ 
  and $\pi^*H=\ee^{2h}|\dz|^2$, the latter being 
  the complete hyperbolic metric on the
  disc. We also have $\ee^{2u(0)} \le M\ee^{2h}$ by hypothesis.

  Using Lemma \ref{lemma:Ck-bounds} we obtain for every $k\in\mathbb N$
  constants $C'_k=C'_k(k,\delta,M)>0$ such that we have uniform $C^k$-bounds
  for all $t\in[\delta,T]$ 
  \begin{align}
    \label{eq:Ck-bounds-metric}
    \sup_{\Disc_{\nicefrac12}} \left| \mathrm D^k\left(\frac1{2t}\ee^{2(u(t)-h)} -
        1\right) \right| &=
    \sup_{\Disc_{\nicefrac12}} \left| \mathrm
      D^k\left(\ee^{2\left(u(t)-\frac12\log2t\right)}\ee^{-2h}\right) \right|
    \le C'_k. 
  \end{align}
  By virtue of Lemma \ref{lemma:barriers} there is a much stronger
  $C^0$-estimate for all $t\in(0,T]$ on $\Disc$ 
  \begin{equation}
    \label{eq:Ck-uniform-conv-D-C0}
    0 \le \frac1{2t}\ee^{2(u(t)-h)}-1 \le  \frac M{2t}.
  \end{equation}
  Now fix $\eta\in(0,1)$ and combine \eqref{eq:Ck-uniform-conv-D-C0} and
  \eqref{eq:Ck-bounds-metric} with the interpolation inequality 
  of Lemma \ref{lemma:interpolation-inequality} to obtain for every $k\in\mathbb N$ 
  constants $C''_k=C''_k(k,\eta,\delta,M)>0$ and $l=\left\lceil\nicefrac 
    k\eta\right\rceil$ such that for all $t\in[\delta,T]$ 
  \begin{align}
    \label{eq:Ck-uniform-conv-ii}
    \left| \mathrm D^k\left(\frac1{2t}\ee^{2(u(t)-h)} -1
      \right) \right|_{|\dz|^2}(0) 
  \leq
  C''_k  t^{-(1-\eta)}.
  \end{align}
  Note that the case $k=0$ of \eqref{eq:Ck-uniform-conv-ii} is already
  dealt with 
  by \eqref{eq:Ck-uniform-conv-D-C0}.
  Then we estimate using Lemma \ref{lemma:equiv-norms} (with constant
  $c=c(k)>0$) and
  \eqref{eq:Ck-uniform-conv-ii} for all $t\in[\delta,T]$
  \begin{align}
    \label{eq:Ck-uniform-conv-final-estim}
    \left| \nabla_H^k\left(\frac1{2t}g(t)-H\right)\right|_H(p) 
    &= \left| \nabla_{\pi^*H}^k\left(\frac1{2t}\ee^{2(u(t)-h)}-1\right)
      \pi^*H\right|_{\pi^*H}(0) \nonumber \\
    &\le c\sum_{j=0}^k \left| \mathrm
      D^j\left(\frac1{2t}\ee^{2(u(t)-h)}-1\right) 
    \right|_{|\dz|^2}(0) \nonumber \\
    &\le c\sum_{j=0}^k C''_j t^{-(1-\eta)} =: C'''_k
    t^{-(1-\eta)}.
  \end{align}
  Since $p\in\Mf$ was chosen arbitrarily and the constants $C'''_k$ are
  independent of $p$ (and of $\pi$), \eqref{eq:Ck-uniform-conv-final-estim}
  holds for all $p\in\Mf$ and we conclude that 
  with $C=C(k,\eta,\delta,M)>0$,
  \[ \left\|\, \frac1{2t}g(t) - H\, \right\|_{C^k(\Mf,H)} 
  \le 
  C\, t^{-(1-\eta)}  \]
  for all $t\in[\delta,T]$.
\end{proof}

\subsection{Curvature bounds and long time existence}

\begin{prop}
  \label{prop:uniform-curvature-bound}
  Let $\bigl(\Mf^2,H\bigr)$ be a complete hyperbolic surface
  and   
  $\bigl(g(t)\bigr)_{t\in[0,T]}$ an instantaneously complete 
  Ricci flow on $\m$, conformally equivalent to $H$.
  If $g(0)\le MH$ for some constant $M>0$, 
  then for all $\delta\in(0,T]$ there exists a constant
  $B=B(M,\delta)<\infty$ such that for all $t\in[\delta,T]$ there holds
  \[ \Bigl|K[g(t)] \Bigr| \le \frac Bt. \]
\end{prop}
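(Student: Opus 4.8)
The plan is to exploit the scalar form of the Gaussian curvature together with the barriers and higher-derivative estimates already established. Writing the flow in a local isothermal coordinate as $g(t)=\ee^{2u(t)}|\dz|^2$, equation \eqref{eq:ricci-flow-cf} identifies the curvature as $K[g(t)] = -\ee^{-2u}\Delta u$, where $\Delta$ is the flat Laplacian in $z=x+\mathrm{i}y$. Exactly as in the proof of Theorem \ref{thm:Ck-uniform-conv-disc}, I would fix an arbitrary point $p\in\Mf$ and pass to a universal covering $\pi\colon\Disc\to\Mf$ with $\pi(0)=p$, writing $\pi^*g(t)=\ee^{2u(t)}|\dz|^2$ (the lifted flow is again instantaneously complete, since $\pi$ is a local isometry). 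As the curvature is a pointwise conformal invariant, it suffices to bound $\ee^{-2u}|\Delta u|$ at the centre $z=0$; if the resulting bound is independent of $p$, we are done.

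The key observation is that $\Delta u$ equals $\Delta v$, where $v(t):=u(t)-\tfrac12\log(2t)$ is the normalised conformal factor of Lemma \ref{lemma:Ck-bounds}, because the subtracted term depends only on $t$. Hence $|\Delta u(t,\cdot)|(0)=|\Delta v(t,\cdot)|(0)$, and Lemma \ref{lemma:Ck-bounds} — applied with $r=1$, $k=2$, and any auxiliary time $\delta'\le\delta$ with $\delta'<\min\{1,T\}$ — furnishes a constant $C_2=C_2(\delta,M)$, independent of $p$, which bounds this quantity uniformly for $t\in[\delta,T]$. For the conformal factor itself I would invoke the lower barrier of Lemma \ref{lemma:barriers}\,(i): in normalised form \eqref{eq:barriers-v} gives $v(t,0)\ge\log 2$, so that
\[ \ee^{-2u(t,0)} = \tfrac1{2t}\,\ee^{-2v(t,0)} \le \tfrac1{8t}. \]

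Combining these two bounds yields
\[ \bigl|K[g(t)]\bigr|(p) = \ee^{-2u(t,0)}\,\bigl|\Delta v(t,\cdot)\bigr|(0) \le \frac{C_2}{8t}, \]
and since $C_2$ depends only on $M$ and $\delta$, setting $B:=C_2/8$ gives the claim for every $p\in\Mf$ and all $t\in[\delta,T]$. I do not expect a genuine obstacle here: all the analytic difficulty has already been absorbed into Lemma \ref{lemma:barriers} (via Chen's scalar-curvature estimate and Yau's Schwarz Lemma) and into the parabolic bootstrapping behind Lemma \ref{lemma:Ck-bounds}. The only point requiring care is the \emph{uniformity of $C_2$ in the base point}; this is automatic because every lift is the standard Poincaré disc, so the hypotheses and the output constant of Lemma \ref{lemma:Ck-bounds} are identical for each $p$, making the final estimate genuinely global on $\Mf$.
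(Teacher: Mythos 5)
Your proof is correct and follows essentially the same route as the paper: the paper writes $\bigl|K[g(t)]\bigr| = \frac1{2t}\bigl|K[\frac1{2t}g(t)]\bigr|$ and bounds the right-hand side by citing the packaged estimate of Theorem \ref{thm:Ck-uniform-conv-disc}, whereas you unpack that citation one level and bound $K\bigl[\tfrac1{2t}g(t)\bigr] = -\ee^{-2v}\Delta v$ directly from Lemma \ref{lemma:Ck-bounds} together with the lower barrier, which rests on the same underlying estimates. The only detail to pin down is the auxiliary time: to keep $B=B(M,\delta)$ independent of $T$, fix for instance $\delta'=\min\{\delta/2,1/2\}$, which satisfies $\delta'<\min\{1,T\}$ because $\delta\le T$, so that the constant from Lemma \ref{lemma:Ck-bounds} depends only on $\delta$ and $M$.
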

\begin{proof}
  By Theorem \ref{thm:Ck-uniform-conv-disc} there exists a constant
  $C=C(\delta,M)>0$ such that for all $t\in[\delta,T]$
  \[ \Bigl| K[g(t)] \Bigr| = \frac1{2t} \left| K\Bigl[\frac1{2t}g(t)\Bigr]
  \right| \le \frac C{2t}. \]
\end{proof}
Note that from Theorem \ref{thm:Ck-uniform-conv-disc} we even have
\[ 2t\,K[g(t)] = K\left[\frac1{2t}g(t)\right] \; \longrightarrow\; -1
\quad\text{uniformly as }t\to\infty. \]

By the work of Shi and Chen-Zhu (Theorem \ref{shihamilton}) we can
state Hamilton's long time existence result \cite[Theorem 14.1]{Ham82} in the
more general setting of complete Ricci flows. Although 
we only need it on surfaces it is also true in higher dimensions.
\begin{lemma}
  \label{lemma:continue-bdd-solution}
  For some $T<\infty$ and $\kappa<\infty$ let $\bigl(g(t)\bigr)_{t\in[0,T]}$
  be a complete Ricci flow on a manifold $\Mf^n$ with bounded curvature 
  $\bigl|\Rm[g(t)]\bigr|\le\kappa$ for all $t\in[0,T]$. Then
  there exist constants 
  $\tau=\tau(\kappa,n)>0$, $\tilde\kappa = \tilde\kappa(\kappa,n)<\infty$
  and a smooth complete extension $\bigl(\tilde
  g(t)\bigr)_{t\in[0,T+\tau]}$ such that $\tilde g(t)=g(t)$ for all
  $t\in[0,T]$ and $\bigl|\Rm[\tilde g(t)]\bigr| \le \tilde\kappa$ for all
  $t\in[T,T+\tau]$. 

  Moreover, if for some $\varepsilon>0$ there exists another such
  complete extension $\bigl(\bar g(t)\bigr)_{t\in[0,T+\varepsilon]}$ with bounded
  curvature and $g(t)=\bar g(t)$ for all $t\in[0,T]$, then $\tilde g(t)=\bar
  g(t)$ for all $t\in\bigl[0,T+\min\{\tau,\varepsilon\}\bigr]$.
\end{lemma}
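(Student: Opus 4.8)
The plan is to extend $g$ beyond time $T$ by restarting the Ricci flow from the time-$T$ slice and gluing the two solutions together, and then to read off the uniqueness (the ``moreover'' clause) directly from the Chen--Zhu part of Theorem \ref{shihamilton}.

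First I would observe that the time-$T$ slice $\bigl(\Mf^n, g(T)\bigr)$ is a complete Riemannian manifold with $\bigl|\Rm[g(T)]\bigr| \le \kappa$. Applying Theorem \ref{shihamilton} to this initial datum produces a time $\tau_0 = \tau_0(\kappa, n) > 0$ and a complete, bounded-curvature Ricci flow $\bigl(h(s)\bigr)_{s \in [0, \tau_0)}$ with $h(0) = g(T)$. By the standard doubling-time (or Shi-type) estimate that accompanies Theorem \ref{shihamilton}, I can choose $\tau = \tau(\kappa, n) \in (0, \tau_0)$ and $\tilde\kappa = \tilde\kappa(\kappa, n) < \infty$ so that $\bigl|\Rm[h(s)]\bigr| \le \tilde\kappa$ for all $s \in [0, \tau]$. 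I then define the candidate extension by $\tilde g(t) := g(t)$ for $t \in [0, T]$ and $\tilde g(t) := h(t - T)$ for $t \in [T, T + \tau]$; on each piece this is a smooth complete Ricci flow, and the two pieces agree at $t = T$ since $h(0) = g(T)$. Completeness on $[0,T]$ is inherited from $g$, and on $[T, T+\tau]$ together with the bound $\tilde\kappa$ it comes from the restart.

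The crux, and the step I expect to be the main obstacle, is that $\tilde g$ is genuinely smooth across the junction $t = T$, rather than merely continuous with a possible kink in time. Here I would use the fact that for any smooth Ricci flow the time derivatives of the metric are determined pointwise by the spatial geometry of a single slice: differentiating $\pddt g = -2\Ric$ repeatedly and substituting the evolution equations for $\Ric$ and $\Rm$ expresses each $\partial_t^k g$ as a universal covariant-differential expression in $g$ at that time. Consequently the one-sided jets of $g$ (from the left) and of $h(\,\cdot\,-T)$ (from the right) at $t = T$ coincide to all orders, both being the same expression evaluated at $g(T) = h(0)$. Since $\tilde g$ is smooth up to the boundary on each of $\Mf \times [0,T]$ and $\Mf \times [T, T+\tau]$ and all boundary jets at $t = T$ match, the standard gluing lemma shows the assembled map is $C^\infty$ on $\Mf \times [0, T+\tau]$; being a Ricci flow on each piece it then solves \eqref{eq:ricci-flow} throughout. (Alternatively one could invoke parabolic regularity after DeTurck's trick, but the jet-matching argument is self-contained.)

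Finally, for the uniqueness clause, suppose $\bigl(\bar g(t)\bigr)_{t\in[0,T+\varepsilon]}$ is another complete, bounded-curvature extension agreeing with $g$ on $[0,T]$. Then both $\bigl(\tilde g(T+s)\bigr)_s$ and $\bigl(\bar g(T+s)\bigr)_s$ are complete, bounded-curvature Ricci flows with the same initial datum $g(T)$ at $s = 0$. The uniqueness assertion of Theorem \ref{shihamilton} forces them to coincide wherever both exist, i.e.\ for $s \in [0, \min\{\tau, \varepsilon\}]$; combined with $\tilde g = g = \bar g$ on $[0,T]$ this yields $\tilde g(t) = \bar g(t)$ for all $t \in [0, T + \min\{\tau, \varepsilon\}]$, as required.
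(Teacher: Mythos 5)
Your proposal is correct and follows essentially the same route as the paper: restart the flow from $g(T)$ via Theorem \ref{shihamilton}, glue, and invoke the Hamilton/Chen--Zhu uniqueness for the ``moreover'' clause. The only difference is that you spell out the jet-matching argument for smoothness across $t=T$, a point the paper's proof leaves implicit; your treatment of it is sound.
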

\begin{proof}
  By Theorem \ref{shihamilton} of Hamilton-Shi there exist a constant
  $\tau=\tau(\kappa,n)>0$ 
  and a complete Ricci flow $\bigl(\tilde g(t)\bigr)_{t\in[T,T+\tau]}$
  starting at $\tilde g(T)=g(T)$ with bounded curvature $\bigl|\Rm[\tilde
  g(t)]\bigr|\le\tilde\kappa<\infty$ for all $t\in[T,T+\tau]$. Combining both
  solutions we obtain the desired extension $\bigl(\tilde
  g(t)\bigr)_{t\in[0,T+\tau]}$ by setting $\tilde g(t)=g(t)$ for all $t\in[0,T)$. 

  If $\bigl(\bar g(t)\bigr)_{t\in[0,T+\varepsilon]}$ for some $\varepsilon>0$
  is another complete Ricci flow with bounded curvature extending $g(t)$, then $\bar
  g(t)=\tilde g(t)$ for all $t\in\bigl[0,T+\min\{\tau,\varepsilon\}]$ by
  Theorem \ref{shihamilton} of Hamilton or Chen-Zhu.
\end{proof}

\begin{cor}
  \label{cor:long-time-existence-disc}
  Let $\bigl(\Mf^2,H\bigr)$  be a complete hyperbolic surface, 
  and $\bigl(g(t)\bigr)_{t\in[0,T]}$ an instantaneously complete
  Ricci flow on $\Mf$, conformally equivalent to $H$.
  If $g(0)\le MH$ for some constant $M>0$, then there exists a unique
  instantaneously complete extension of $g(t)$
  defined for all time $t\in[0,\infty)$. 

  Moreover, if for some $\kappa_0<\infty$ there holds
  $K[g(t)]\le\kappa_0$ for all $t\in[0,T]$, then there exists a constant
  $\kappa=\kappa(M,T,\kappa_0)<\infty$ such that 
  \begin{equation}
    \label{eq:uniform-upper-curv-bd}
    K[g(t)] \le \kappa \qquad\text{for all } t\in[0,\infty).
  \end{equation}
\end{cor}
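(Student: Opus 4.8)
The plan is to turn the finite-time curvature control of Proposition \ref{prop:uniform-curvature-bound} into a continuation scheme that buys a \emph{fixed} amount of extra time at every step. The crucial observation, which I would isolate first, is that the bound $|K[g(t)]|\le B/t$ supplied by that proposition holds with a constant $B=B(M,\delta)$ depending only on $M$ and $\delta$ and \emph{not} on the length of the time interval on which the flow happens to be defined, and moreover the bound \emph{improves} as $t$ grows. This monotonicity is exactly what prevents the continuation steps from degenerating.

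For the existence of the extension I would fix $\delta=T$ and write $B=B(M,T)$. On any interval $[0,T']$ on which the flow is already defined, Proposition \ref{prop:uniform-curvature-bound} gives $|K[g(t)]|\le B/T$ uniformly for $t\in[T,T']$. Since $g(t)$ is instantaneously complete and $T>0$, the time-shifted flow $s\mapsto g(s+T)$ is a \emph{complete} Ricci flow of uniformly bounded curvature $|\Rm|\le C\,B/T$ (with $C$ the two-dimensional constant relating $|\Rm|$ to $|K|$), so Lemma \ref{lemma:continue-bdd-solution} applies and yields a smooth complete extension over an extra time $\tau_*=\tau(CB/T,2)>0$ that is \emph{independent of $T'$}. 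The extended flow is again instantaneously complete, is still conformally equivalent to $H$ because two-dimensional Ricci flow preserves the conformal class, and still satisfies $g(0)\le MH$; hence Proposition \ref{prop:uniform-curvature-bound} reapplies on the longer interval with the \emph{same} constant $B$. Iterating, the existence interval grows by the fixed amount $\tau_*$ at each stage and therefore exhausts $[0,\infty)$.

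For uniqueness, suppose $g_1,g_2$ are two instantaneously complete extensions of $g$ to $[0,\infty)$. Each is conformally equivalent to $H$ with $g_i(0)=g(0)\le MH$, so Proposition \ref{prop:uniform-curvature-bound} again gives $|K[g_i(t)]|\le B/t$; in particular each $g_i$ is complete with uniformly bounded curvature on every interval $[T,T']$. As $g_1(T)=g(T)=g_2(T)$ is a single complete metric of bounded curvature, the Hamilton--Shi uniqueness statement of Theorem \ref{shihamilton}, applied with $T$ as the initial time, forces $g_1\equiv g_2$ on $[T,\infty)$; since they already coincide on $[0,T]$ this gives $g_1\equiv g_2$.

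Finally, the uniform upper curvature bound follows at once once existence is in hand: on $[0,T]$ we are given $K[g(t)]\le\kappa_0$, while for $t\ge T$ Proposition \ref{prop:uniform-curvature-bound} with $\delta=T$ gives $K[g(t)]\le|K[g(t)]|\le B(M,T)/t\le B(M,T)/T$, so $\kappa=\max\{\kappa_0,B(M,T)/T\}$ proves \eqref{eq:uniform-upper-curv-bd}. The single point requiring care—and what I expect to be the main obstacle—is ensuring the continuation step $\tau_*$ stays bounded away from $0$ throughout the iteration; this is guaranteed precisely by the $T'$-independence of $B$ together with the decay of $B/t$, which keep the curvature at the far endpoint bounded by $B/T$ uniformly in the stage of the iteration.
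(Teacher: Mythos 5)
Your proposal is correct and follows essentially the same route as the paper: the paper likewise uses the $T'$-independence of the constant $B$ from Proposition \ref{prop:uniform-curvature-bound} (taking $\delta=T/2$ rather than $\delta=T$) to get a fixed curvature bound $\kappa_1=2B/T$ at the right-hand endpoint, applies Lemma \ref{lemma:continue-bdd-solution} to gain a fixed $\tau=\tau(\kappa_1)$ at each stage, iterates, and deduces uniqueness from the bounded-curvature uniqueness of Hamilton/Chen--Zhu away from $t=0$. The final bound $\kappa=\max\{\kappa_0,\kappa_1\}$ is obtained exactly as you describe.
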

\begin{proof}
  The long-time existence is a direct consequence of the preceding Lemma
  \ref{lemma:continue-bdd-solution} and the apriori curvature bounds of
  Proposition \ref{prop:uniform-curvature-bound}: We have for all
  $t\in\bigl[\nicefrac T2,T\bigr]$
  \begin{equation}
    \label{eq:long-time-existence-curv-bd}
    \Bigl|K\bigl[g(t)\bigr]\Bigr| \le \frac Bt \le \frac{2B}T =: \kappa_1
    <\infty.     
  \end{equation}
  By Lemma \ref{lemma:continue-bdd-solution} there exist a $\tau>0$ depending
  only on $\kappa_1$ and a unique extension $\bigl(\tilde
  g(t)\bigr)_{t\in[0,T+\tau]}$ with the very same curvature bound $\kappa_1$ in
  \eqref{eq:long-time-existence-curv-bd} on the larger time interval
  $\bigl[\nicefrac T2,T+\tau\bigr]$. Iterating these arguments we obtain for
  any $j\in\mathbb N$ an extension $\bigl(\tilde g(t)\bigr)_{t\in[0,T+j\tau]}$
  with bounded curvature $\bigl|K[\tilde g(t)]\bigr|\le \kappa_1$ for all
  $t\in\bigl[\nicefrac T2,T+j\tau\bigr]$, and therefore we can continue $g(t)$
  to exist for all time $t\in[0,\infty)$. Since the curvature is bounded away
  from $t=0$, by Lemma \ref{lemma:continue-bdd-solution} the extension is also
  unique among other instantaneously complete extensions.
  
  In order to show the uniform upper bound for the curvature in
  \eqref{eq:uniform-upper-curv-bd}, note that  
  \eqref{eq:long-time-existence-curv-bd} is true for all
  $t\in\bigl[\nicefrac T2,\infty)$. Combining that bound with $\kappa_0$ for
  times $t\in[0,\nicefrac T2]$, we conclude the theorem with
  $\kappa:=\max\{\kappa_0,\kappa_1\}$. 
\end{proof}

\section{Existence}

\subsection{Existence of a maximally stretched solution on the disc $\Disc$}
\label{subsect:ex-disc}

In this section we prove the main existence and asymptotics
result in the case that the Ricci flow starts at a Riemannian surface
which is conformally the disc.
\begin{thm}
  \label{thm:exist-D}
  Let $(\Disc,H)$ be the complete hyperbolic disc and let
  $g_0$ be a smooth (possibly incomplete) Riemannian
  metric on $\Disc$ which is conformally equivalent to $H$.
  Then there exists a unique\footnote{In fact, if $\tilde g(t)$ is any maximally
    stretched Ricci flow for $t\in [0,T]$ with $\tilde g(0)=g_0$, then $\tilde
    g(t)=g(t)$ for all $t\in [0,T]$.}, maximally stretched and instantaneously
  complete Ricci flow $g(t)$ for all time $t\in[0,\infty)$
  with $g(0)=g_0$, and the
  rescaled flow converges smoothly locally
  \[ \frac1{2t} g(t)\;\longrightarrow\; H \quad\text{as}\quad
  t\to\infty. \] 
  Moreover, if $g_0\le M H$ for some (possibly large) $M>0$ then the convergence
  above is global: For any $k\in\mathbb N_0$ and $\eta\in(0,1)$ there is a
  constant $C=C(k,\eta,M)>0$ such that
  \[ \left\| \frac1{2t} g(t) - H \right\|_{C^k(\Disc,H)}\le \frac C{t^{1-\eta}} \]
for $t\geq 1$.
\end{thm}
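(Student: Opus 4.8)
The plan is to construct $g(t)$ as a monotone limit of the solutions furnished by Theorem \ref{ICexistence1} on an exhaustion of $\Disc$ by subdiscs. Fix an increasing sequence $r_i\nearrow1$ and let $H_{r_i}=\ee^{2h_{r_i}}|\dz|^2$ be the complete hyperbolic metric on $\Disc_{r_i}$. On each $\overline{\Disc_{r_i}}$ the fixed metric $g_0=\ee^{2u_0}|\dz|^2$ is smooth, hence has Gaussian curvature bounded above, so Theorem \ref{ICexistence1} supplies a maximally stretched, instantaneously complete Ricci flow $g_i(t)=\ee^{2u_i(t)}|\dz|^2$ on $\Disc_{r_i}$ over a short interval $[0,T_i]$ with $g_i(0)=g_0|_{\Disc_{r_i}}$. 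Since $g_0$ is fixed while $H_{r_i}$ blows up at $\partial\Disc_{r_i}$, we have $g_i(0)\le M_iH_{r_i}$ for some $M_i<\infty$, so Corollary \ref{cor:long-time-existence-disc} extends each $g_i$ to a unique instantaneously complete flow on all of $[0,\infty)$ (maximal stretching persisting on $[0,\infty)$ as a comparison principle).

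Next I would pass to the limit $i\to\infty$. Restricting $g_{i+1}$ to $\Disc_{r_i}$ gives a Ricci flow with the same initial data as $g_i$, so maximal stretching of $g_i$ forces $g_{i+1}|_{\Disc_{r_i}}\le g_i$; thus $u_i$ is monotone decreasing in $i$. A uniform lower bound comes from Lemma \ref{lemma:barriers}(i): each $g_i$ satisfies $(2t)H_{r_i}\le g_i(t)$, and since $H_{r_i}\ge H$ on $\Disc_{r_i}$ we obtain the $i$-independent bound $(2t)H\le g_i(t)$. Hence $u_i\downto u$ pointwise on $(0,\infty)\times\Disc$. To see that the limit is smooth, fix a sub-disc $\Disc_{r'}$ with $r'<1$; for all large $i$ we have the fixed upper barrier $g_0|_{\Disc_{r'}}\le M'H_{r'}$ with $M'$ independent of $i$, so Lemma \ref{lemma:Ck-bounds} (applied to $g_i$ after rescaling $\Disc_{r_i}$ to the unit disc, under which the equation is invariant) yields $C^k$ bounds on $\Disc_{r'-\delta}\times[\delta,T]$ that are uniform in $i$. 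Therefore $u_i\to u$ in $C^\infty_{\mathrm{loc}}\bigl((0,\infty)\times\Disc\bigr)$ and $g=\ee^{2u}|\dz|^2$ is a smooth Ricci flow; instantaneous completeness survives because the lower barrier $(2t)H\le g(t)$ passes to the limit, and maximal stretching of $g$ follows by applying maximal stretching of each $g_i$ to an arbitrary competitor restricted to $\Disc_{r_i}$ and letting $i\to\infty$. Uniqueness within the maximally stretched class is then immediate from the two-sided comparison.

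The main obstacle will be attaining the initial data, i.e.\ showing $g(t)\to g_0$ as $t\searrow0$, because the lower barrier $(2t)H$ degenerates to $0$ at $t=0$ while $g_0$ may have unbounded curvature. The inequality $u\le u_i$ together with the smoothness of each $g_i$ up to $t=0$ gives $\limsup_{t\to0}u\le u_0$ for free. For the reverse inequality I would, near any point $p$, choose a complete metric $\bar g_0$ of bounded curvature with $\bar g_0\le g_0$ and $\bar g_0=g_0$ on a small ball about $p$, run the Hamilton--Shi flow of Theorem \ref{shihamilton} to obtain $\bar g(t)\to\bar g_0$, and invoke maximal stretching (using $\bar g(0)\le g_0=g_i(0)$) to deduce $\bar g(t)\le g_i(t)$ and hence $\bar g(t)\le g(t)$. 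This produces a local lower barrier attaining $g_0$ at $t=0$, giving $\liminf_{t\to0}g\ge g_0$ near $p$; with the matching two-sided local bounds now in force, interior parabolic regularity upgrades this to smooth convergence $g(t)\to g_0$, so that $g(0)=g_0$ and $g$ exists on all of $[0,\infty)$.

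Finally, for the asymptotics as $t\to\infty$: in the global case $g_0\le MH$ the flow $g$ is itself instantaneously complete, conformal to $H$, with $g(0)\le MH$, so Theorem \ref{thm:Ck-uniform-conv-disc} applies directly on $[0,T]$ for every $T$, giving $\bigl\|\tfrac1{2t}g(t)-H\bigr\|_{C^k(\Disc,H)}\le C\,t^{-(1-\eta)}$ for all $t\ge1$ with $C=C(k,\eta,M)$. For the local statement without an upper barrier I would combine the lower bound $\tfrac1{2t}g(t)\ge H$ (Lemma \ref{lemma:barriers}(i)) with the monotonicity $g(t)\le g_i(t)$ and Theorem \ref{thm:Ck-uniform-conv-disc} applied to each $g_i$, which gives $\tfrac1{2t}g_i(t)\to H_{r_i}$ uniformly on compacta; thus $H\le\tfrac1{2t}g(t)\le\tfrac1{2t}g_i(t)\to H_{r_i}\downto H$, squeezing $\tfrac1{2t}g(t)\to H$ in $C^0_{\mathrm{loc}}$. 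The uniform-in-$t$ $C^{k+1}$ bounds from Lemma \ref{lemma:Ck-bounds} then upgrade the convergence to $C^k_{\mathrm{loc}}$ via interpolation (or Arzel\`a--Ascoli).
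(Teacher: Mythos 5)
Your overall architecture matches the paper's: exhaust $\Disc$ by subdiscs, run the flows of Theorem \ref{ICexistence1} on each, extend them for all time via Corollary \ref{cor:long-time-existence-disc}, pass to a monotone decreasing limit trapped between the lower barrier $(2t)H$ of Lemma \ref{lemma:barriers}(i) and the decreasing sequence itself, and obtain the asymptotics from Lemma \ref{lemma:Ck-bounds} and Theorem \ref{thm:Ck-uniform-conv-disc} exactly as you describe. The asymptotics and the maximal-stretching/uniqueness arguments are sound; the paper formalises your monotonicity step as Lemma \ref{lemma:compare-sequence}, which also supplies the persistence of the comparison past the original existence time $T_i$ that you only assert parenthetically.

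However, there is a genuine gap at the step you yourself flag as the main obstacle: attaining the initial data. You propose to build a \emph{complete} bounded-curvature metric $\bar g_0$ on $\Disc$ with $\bar g_0\le g_0$ and $\bar g_0=g_0$ near $p$, and to run the Hamilton--Shi flow from it as a lower barrier. No such metric exists whenever $g_0$ is incomplete: if $\bar g_0\le g_0$, then every divergent path has $g_0$-length at least its $\bar g_0$-length, so completeness of $\bar g_0$ would force completeness of $g_0$. (Concretely, if $g_0\le C\,|\dz|^2$, any conformal metric below it assigns finite length to radial paths reaching $\partial\Disc$.) Since the incomplete case is the entire point of the theorem, the lower barrier cannot be produced this way. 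The paper instead obtains the uniform lower bound on $u_j$ near $t=0$ from Chen's pseudolocality-type estimate (Theorem \ref{thm:chen-2d-local-curv-estim}): on a compact $\Omega$ one has, uniformly in $j$, the two-sided bound $\bigl|K[g_j(t)]\bigr|\le 2r_0^{-2}$ for $t\in[0,\tau]$ with $\tau$ and $r_0$ depending only on the local geometry of $g_0$; integrating $\pddt u_j=-K[g_j]$ then gives the missing lower bound on $u_j$ near $t=0$, and parabolic regularity yields uniform $C^k$ control down to $t=0$, hence $g(0)=g_0$. Replacing your auxiliary-metric construction with this local curvature estimate repairs the proof; the remainder of your argument then goes through as in the paper.
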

\begin{proof}
  Without loss of generality, we can
  write $g_0=\ee^{2u_0}|\dz|^2$ for the initial metric and $H=\ee^{2h}|\dz|^2$
  for the complete hyperbolic metric on the disc $\Disc$.
  
  Let $\big(D_j\bigr)_{j\in\mathbb N}\subset\Disc$ be a suitable
  exhaustion of $\Disc$, 
  e.g. $D_j=\Disc_{1-\frac1{j+1}}$, and
  define $C_j:=\sup_{D_j} u_0<\infty$ and $\kappa^0_j:=\sup_{D_j}K[g_0]$.
  Furthermore let $H_j=\ee^{2h_j}|\dz|^2$ be the complete hyperbolic metric on
  the smaller disc $D_j$. Note that there holds  
  \begin{equation}
    \label{eq:locally-below-hyperbolic}
    g_0\bigr|_{D_j} \le \ee^{2C_j}H_j.   
  \end{equation}
  Then for each $j\in\mathbb N$, by virtue of Theorem
  \ref{ICexistence1}\footnote{Combining the techniques of this
  paper and of \cite{GT10} yields a simpler, more direct proof of
  this theorem.} 
  there exist constants $T_j=T_j(\kappa^0_j)>0$, 
  $\tilde\kappa_j=\tilde\kappa_j(T_j,\kappa^0_j)<\infty$, and
  a maximally stretched and instantaneously complete solution
  $\bigl(g_j(t)\bigr)_{t\in[0,T_j]}$ to the Ricci flow on $D_j$ with 
  $g_j(0)=g_0\bigr|_{D_j}$, and $K[g(t)]\le\tilde\kappa_j$ for all
  $t\in[0,T_j]$. 
  Since $g_j(0)\le \ee^{2C_j}H_j$,
  we may apply Corollary
  \ref{cor:long-time-existence-disc} to show that each $g_j(t)$ 
  can be extended to exist forever and has a uniform (in $t$)
  upper curvature bound
  $K[g_j(t)]\le\kappa_j=\kappa_j(C_j,\kappa^0_j)<\infty$.
  Define $u_j(t)$ such that  $\ee^{2u_j(t)}|\dz|^2=g_j(t)$. 
    
  Next observe that owing to Lemma \ref{lemma:compare-sequence},
  for all $(t,z)\in[0,\infty)\times\Disc$ and $j\in\mathbb
  N$ sufficiently large such that $z\in D_j$, the sequence
  $\bigl(u_j(t,z)\bigr)_{j\in\mathbb N}$ is (weakly) decreasing.
  Therefore, for any $\Om\subset\subset\Disc$ and $T\in (1,\infty)$,
  $u_j$ is uniformly bounded above (independently of $j$) 
  on $[0,T]\times\Om$ for sufficiently large $j$.
  On the other hand by Lemma \ref{lemma:barriers}(i) we know that the
  conformal factor of the `big-bang' Ricci flow $(2t)H$ is a
  lower barrier for each $g_j(t)$ i.e. 
  \begin{equation}
    \label{eq:exist-D-lower-barrier-k}
    h\bigr|_{D_j} +\frac12\log(2t) \le u_j(t)
  \end{equation}
  for all $t>0$. 
  Therefore for any $t_0\in (0,T)$, 
  $u_j$ is uniformly bounded below (independently of $j$)
  on $[t_0,T]\times\Om$.
  To obtain a uniform lower bound on $u_j$ near $t=0$, we will
  follow the ideas of \cite{Top10} and appeal to the 
  pseudolocality-type result 
  Theorem \ref{thm:chen-2d-local-curv-estim} of Chen.
Since $\Omega$ is compact and $g_0$ is smooth,
we can choose $r_0,v_0>0$ sufficiently small such that for all 
$p\in\Omega$ there holds, for sufficiently large $j$,
\begin{compactenum}[(i)]
\item $\gBall_{g_0}(p;r_0)\subset\subset D_{j}$, in particular 
    $\gBall_{g_j(t)}(p;r_0)\subset\subset D_j$ for all $t\in[0,T]$;
    \item $\bigl|K[g_0]\bigr| \le r_0^{-2}$ on $\gBall_{g_0}(p;r_0)$;
    \item $\Vol_{g_0}\gBall_{g_0}(p;r_0) \ge v_0 r_0^2$.
    \end{compactenum}
Therefore we may apply Theorem \ref{thm:chen-2d-local-curv-estim} 
to each such flow $g_j(t)$ and obtain a constant $\tau=\tau(v_0,r_0)\in(0,T]$ such that for sufficiently large $j$ and $t\in[0,\tau]$ 
    \begin{equation}
      \label{eq:nlic-curv-bound}
      \Bigl| K[g_j(t)] \Bigr| \le 2r_0^{-2}\quad\text{on }\Omega.
    \end{equation}
By inspection of the Ricci flow equation, this gives us 
a uniform lower bound on $u_j$ on $[0,\tau]\times\Om$ for
sufficiently large $j$.

Combining these estimates, we find that we have uniform
upper and lower bounds for the decreasing sequence 
$u_j$ on $[0,T]\times\Om$
(independent of $j$, for sufficiently large $j$) and thus we may apply 
parabolic regularity to get $C^k$ estimates on the functions
$u_j$ (uniform in $j$, for sufficiently large $j$) on any
compact subset of $[0,\infty)\times\Disc$.
  Therefore we may define a smooth function
  $u:[0,\infty)\times\Disc\to\mathbb R$ by 
  \[ u(t,z) := \lim_{j\to\infty} u_j(t,z), \]
  and the corresponding metric flow $g(t):= \ee^{2u(t)}|\dz|^2$
  must be a smooth Ricci flow, defined for all $t\in [0,\infty)$, 
  with $g(0)=g_0$.
By \eqref{eq:exist-D-lower-barrier-k} we also have $(2t)H\le g(t)$ on 
  $\Disc$ for all $t>0$, so $g(t)$ is instantaneously complete. 

  To see that $g(t)$ is maximally stretched, 
  let $\tilde u:[0,\varepsilon]\times\Disc\to\mathbb R$ be the conformal
  factor of any other Ricci flow with $\tilde u(0,\cdot)\leq u_0$, then the
  maximality of $u_j(t)$
  tells us that $\tilde u|_{D_j}(t,z) \leq u_j(t,z)$ for all $z\in D_j$ and
  $t\in [0,\varepsilon]$, and therefore (taking $j\to\infty$) $\tilde
  u(t,z)\leq u(t,z)$ for all $z\in\Disc$ and $t\in[0,\varepsilon]$. 
  Obviously $g(t)$ is unique amongst
  maximally stretched solutions (cf. Remark
  \ref{rmk:uniqueness-maximal-stretched}).

  We now conclude the proof by showing the asymptotic convergence:
  For fixed $\Omega\subset\subset\Disc$ define
  $\delta:=\dist(\Omega,\partial\Disc)>0$, and fix $k\in\mathbb N$.
  Using Lemma \ref{lemma:equiv-norms} (with constant $C'=C'(\delta,k)>0$) and
  Lemma \ref{lemma:Ck-bounds} we establish uniform $C^k$-bounds on $\Omega$
  for all $t\ge1$
  \begin{align}
    \label{eq:ex-D-local-Ck-bounds}
    \sup_\Omega \left|\nabla_H^k\left(\frac1{2t}g(t)-H\right)\right|_H &=
    \sup_\Omega \left|\nabla_H^k\left(\frac1{2t}g(t)\right)\right|_H 
    \le C' \left\| \frac1{2t}g(t)\right\|_{C^k(\Disc_{1-\delta},|\dz|^2)} \nonumber\\
    &= \sqrt2 C' \left\| \ee^{2\left(u(t)-\frac12\log2t\right)}
    \right\|_{C^k(\Disc_{1-\delta},|\dz|^2)} \nonumber\\ &\le
    C\Bigl(k,\delta,\sup_{\Disc_{1-\delta}}u_0\Bigr).
  \end{align}
  For any $r\in(0,1]$, let $H_r$ be the complete hyperbolic metric on the disc 
  $\Disc_r$ of radius $r$. Note that for $0<s\le r\le1$ we have
  $H\bigr|_{\Disc_s}\le H_r\bigr|_{\Disc_s}\le H_s$.
  Using Lemma \ref{lemma:barriers} we can estimate for any
  $r\in(1-\delta,1)$ with $M(r)= \inf\bigl\{M>0:
  g_0\bigr|_{\Disc_r}\le MH_r\bigr\}$ on $\Disc_r\supset\supset\Omega$
  \[ 0\le \frac1{2t}g(t)-H = \left(\frac1{2t}g(t) - H_r\right) + \bigl(H_r
  -H\bigr) \le \frac{M(r)}{2t}H_r + \bigl(H_r-H\bigr). \]
  Therefore 
  \begin{align}
    \label{eq:ex-D-local-C0-conv}
    \limsup_{t\to\infty} \left\| \frac1{2t} g(t)-H
    \right\|_{C^0(\Omega,H)} &\le \limsup_{t\to\infty} 
    \frac{M(r)}{2t} \Bigl\|H_r\Bigr\|_{C^0(\Omega,H)} +
    \Bigl\|H_r - H\Bigr\|_{C^0(\Omega,H)} \nonumber\\
    &= \Bigl\|H_r - H\Bigr\|_{C^0(\Omega,H)}
    \quad\stackrel{r\nearrow1}\longrightarrow\quad 0. 
  \end{align}
  Combining the local uniform $C^k$-bounds \eqref{eq:ex-D-local-Ck-bounds}
  with thc $C^0$-convergence \eqref{eq:ex-D-local-C0-conv} we obtain local
  convergence in the $C^k$ norm
  \[ \left\| \frac1{2t}g(t)-H\right\|_{C^k(\Omega,H)}
  \quad\stackrel{t\to\infty}\longrightarrow\quad 0. \]

  Finally, if $g_0 \le MH$ for some constant $M>0$ then the uniform
  and smooth convergence of $\frac1{2t}g(t)$ to $H$ as $t\to\infty$ is a
  consequence of Theorem \ref{thm:Ck-uniform-conv-disc}.
\end{proof}

\begin{proof}[Proof of \textbf{Theorem \ref{unfinished}}]
  By Corollary \ref{cor:chen-lower-curv-bd-inst-complete} and Corollary
  \ref{cor:long-time-existence-disc} $g_1(t)$ and $g_2(t)$ safisfy all
  conditions to compare each flow with the maximally stretched solution of
  Theorem \ref{thm:exist-D} using the same proof as \cite[Theorem 4.1]{GT10}.
\end{proof}

\subsection{Existence and uniqueness on the complex plane $\mathbb C$}
\label{subsect:ex-uniq-C}

Whilst Lemma \ref{lemma:barriers} gives a good lower barrier for
solutions on the disc $\Disc\subset\mathbb C$, it cannot be used directly
on the whole plane $\mathbb C$, because the plane does not admit a
hyperbolic metric. 
However, 
the following theorem provides such a uniform bound in that case by
considering the plane with a disc taken off which does admit a hyperbolic metric. 

\begin{thm}
  \label{thm:lower-barrier-C}
  Let $\bigl(\ee^{2u(t)}|\dz|^2\bigr)_{t\in[0,T]}$ be a smooth,
  instantaneously complete Ricci flow on the complex plane $\mathbb C$. Then 
  there exists a constant $C=C\bigl(u|_{[0,T]\times\Disc_2},T\bigr)<\infty$ such 
  that for all $|z|\ge2$ and $t\in(0,T]$ there holds
  \[ u(t,z) \ge -C -\log\bigl(|z|\log|z|\bigr) + \frac12\log(2t). \]
\end{thm}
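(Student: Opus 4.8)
The plan is to compare $u$ with the conformal factor of the explicit expanding ``big-bang'' flow on the hyperbolic cusp exterior to a disc. Writing $r=|z|$, the region $\{r>1\}$ is conformally a punctured disc and carries the complete hyperbolic metric $H=\frac{|\dz|^2}{r^2(\log r)^2}$, whose conformal factor is exactly $-\log(r\log r)$. A direct computation (using $\Delta\log r=0$ and $\Delta\log\log r=-r^{-2}(\log r)^{-2}$) shows that the expanding flow $(2t)H$ is a genuine Ricci flow, so that
\[ \phi(t,z):=-\log\bigl(r\log r\bigr)+\half\log(2t) \]
solves $\pddt\phi=\ee^{-2\phi}\Delta\phi$ on $\{r>1\}$. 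The assertion to be proved is precisely $u\ge\phi-C$ on $\{r\ge2\}$, so it is natural to try to use $\phi$, shifted down by a constant, as a lower barrier.

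The key observation is that subtracting a constant turns this exact solution into a strict subsolution. Indeed $\Delta\phi=\ee^{-2\cdot(-\log(r\log r))}\cdot\ee^{2(-\log(r\log r))}=\frac1{r^2(\log r)^2}=\ee^{2(-\log(r\log r))}>0$ (this is just $K[H]\equiv-1$), so for any constant $C\ge0$ the function $\psi:=\phi-C$ satisfies
\[ \pddt\psi=\pddt\phi=\ee^{-2\phi}\Delta\phi=\ee^{-2C}\,\ee^{-2\psi}\Delta\psi\le\ee^{-2\psi}\Delta\psi, \]
i.e.\ $\psi$ is a subsolution of the conformal Ricci flow equation (equivalently, $\ee^{-2C}(2t)H$ is a subsolution of the logarithmic fast diffusion equation \eqref{eq:log-fde}).

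Next I would fix $C$ using only the inner boundary data. Since $u$ is continuous on the compact set $[0,T]\times\{r=2\}$, it is bounded below there by some $m=m\bigl(u|_{[0,T]\times\Disc_2}\bigr)$, while on $\{r=2\}$ one has $\psi(t,z)=-C-\log(2\log2)+\half\log(2t)\le-C-\log(2\log2)+\half\log(2T)$; choosing $C=C\bigl(u|_{[0,T]\times\Disc_2},T\bigr)$ large enough forces $\psi\le u$ on $\{r=2\}$ for all $t\in(0,T]$. The initial condition is automatic, because $\psi\to-\infty$ as $t\searrow0$ whereas $u(0,\cdot)$ is finite. It then remains to run the comparison principle (Theorem \ref{thm:direct-comp-principle}) on the exterior region $\{r\ge2\}$ to conclude $\psi\le u$, which is exactly the claimed bound.

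The main obstacle is that $\{r\ge2\}$ is non-compact: its only other ``boundary'' is the hyperbolic cusp at $r=\infty$, where $\psi\to-\infty$ at exactly the borderline rate $-\log(r\log r)$ and where $u$ is a priori uncontrolled. This is precisely where instantaneous completeness of $g(t)$ must enter, playing the same role as in Lemma \ref{lemma:barriers}(i): under $w=1/z$ the region $\{r>2\}$ becomes a punctured disc with the cusp at $w=0$, and completeness of $g(t)$ at the infinity of $\C$ means no boundary condition is needed at the puncture, so the comparison is governed solely by the circle $r=2$. To make this rigorous I would exhaust $\{r\ge2\}$ by annuli $\{2\le r\le R\}$, perturb $\psi$ by a vanishing correction $-\varepsilon\log\log r$ (which remains a subsolution for $C$ large, and corresponds to an \emph{incomplete} end that completeness of $g(t)$ forbids $u$ from undercutting), apply the maximum principle on each annulus, and then send $\varepsilon\to0$ and $R\to\infty$. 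The delicate point, and the crux of the argument, is thus the justification that completeness removes any need to control $u$ at the cusp; the lower curvature bound $K[g(t)]\ge-\frac1{2t}$ from Corollary \ref{cor:chen-lower-curv-bd-inst-complete} is the quantitative manifestation of completeness that underlies it.
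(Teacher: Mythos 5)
Your barrier is the right object and your subsolution computation for $\psi=\phi-C$ is correct, but the argument does not close at the step you yourself flag as the crux: justifying that no boundary condition is needed at infinity. The annular maximum principle on $\{2\le r\le R\}$ requires $\psi_\varepsilon\le u$ \emph{pointwise on the outer circle} $\{r=R\}$, and completeness of $g(t)$ simply does not supply this. Completeness is a statement about lengths of paths to infinity ($\int \ee^{u}\,\mathrm ds=\infty$); it rules out $u\le -\log r-(1+\varepsilon)\log\log r+c$ holding on an entire neighbourhood of infinity, but it does not prevent $u$ from dipping far below your perturbed barrier on particular circles or along a sequence $z_n\to\infty$. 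Consequently the minimizing sequence for $u-\psi_\varepsilon$ may escape to infinity with no touching point and no contradiction, and the phrase ``completeness forbids $u$ from undercutting'' is not a proof. Note also that Lemma \ref{lemma:barriers}(i), which you cite as the model for this step, is itself \emph{not} proved by a parabolic comparison argument but by Chen's estimate plus Yau's Schwarz Lemma --- and you cannot apply it directly here either, since $g(t)$ restricted to $\{|z|>1\}$ is incomplete at the inner boundary.

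The paper's resolution is different in kind: it works time-slice by time-slice rather than parabolically. One glues the flow to the hyperbolic metric of $\mathbb C\setminus\overline{\Disc}$ near the unit circle, via a cutoff, producing for each $t$ a \emph{complete} metric $\ee^{2v(t)}|\dz|^2$ on $\mathbb C\setminus\overline{\Disc}$ (complete at the inner end because it agrees with $h$ there, and at infinity because $g(t)$ is complete on $\mathbb C$) whose curvature is bounded below by $\min\{-\tfrac1{2t},-\beta\}$ --- here Chen's bound $K[g(t)]\ge-\tfrac1{2t}$ is actually used, on the region where $v=u$, and $\beta$ absorbs up to two derivatives of $u$ on the gluing annulus (which is why the constant depends on $u|_{[0,T]\times\Disc_2}$ and not merely on a lower bound for $u$ on $\{r=2\}$). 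Yau's Schwarz Lemma (Theorem \ref{thm:yau}) applied to the identity map then gives $v(t)\ge h+\tfrac12\log\min\{2t,\beta^{-1}\}$ everywhere, hence the claim on $\{|z|\ge2\}$ where $v=u$. Yau's lemma is precisely the Omori--Yau-type substitute for the maximum principle at the complete end that your exhaustion argument is missing; to repair your proof you would need to either adopt this gluing-plus-Schwarz-Lemma mechanism or supply some other genuine noncompact maximum principle, not the $\varepsilon\log\log r$ perturbation.
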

\begin{proof}
  Pick any cutoff function $\varphi\in C^\infty_c\bigl(\Disc_2,[0,1]\bigr)$ with
  $\varphi\equiv1$ in $\Disc_{\nicefrac32}$, and define
  \[ \alpha := \sup_{[0,T]\times(\Disc_2\setminus\Disc_{\nicefrac32})} \bigl|u\bigr| +
  \bigl|\mathrm{D} u\bigr|_{|\dz|^2} + \bigl| \mathrm{D}^2u \bigr|_{|\dz|^2}. \]
  Furthermore, let $\ee^{2h}|\dz|^2$ be the complete hyperbolic metric on $\mathbb
  C\setminus\overline\Disc$, i.e. $h(z)=-\log\bigl(|z|\log|z|\bigr)$. Finally,
  consider an interpolated metric defined by the conformal factor 
  $v\in C^\infty\bigl([0,T]\times(\mathbb C\setminus\overline\Disc)\bigr)$ 
  given by
  \[ v(t,z) := \varphi(z)\cdot h(z) + \bigl(1-\varphi(z)\bigr)\cdot u(t,z). \]
  Note that by Corollary \ref{cor:chen-lower-curv-bd-inst-complete}, we have
  $K[u(t)]\ge-\frac1{2t}$ for all $t\in(0,T]$. Thus we can estimate the
  Gaussian curvature of $\ee^{2v(t)}|\dz|^2$ for all $t\in(0,T]$, by
  \[ K\bigl[v(t,z)\bigr] \ge\left.\begin{cases}
      -1 & \text{for } |z|\le\frac32\text{, i.e. where }v=h \\
      -\beta & \text{for } \frac32<|z|< 2\\
      -\frac1{2t} & \text{for } |z|\ge2\text{, i.e. where }v=u
    \end{cases}\right\} \ge\min\left\{ -\frac1{2t}, -\beta \right\} \]
  where $\beta=\beta(\alpha,\varphi)\ge1$. Comparing $\ee^{2v(t)}|\dz|^2$ with
  $\ee^{2h}|\dz|^2$ using Theorem \ref{thm:yau} yields
  for all $(t,z)\in(0,T]\times(\mathbb C\setminus\Disc_2)$
  \[ u(t,z) = v(t,z) \ge h(z) + \frac12\log\Bigl(\min\bigl\{ 2t, \beta^{-1}
  \bigr\}\Bigr) \ge -C -\log\bigl(|z|\log|z|\bigr) +\frac12\log(2t), \]
  defining $C:=\max\Bigl\{\frac12\log(2T),0\Bigr\}+\frac12\log\beta<\infty$.
\end{proof}
\begin{cor}
  \label{cor:comparison-principle-C}
  Let $\bigl(g_1(t)\bigr)_{t\in[0,T]}$ and $\bigl(g_2(t)\bigr)_{t\in[0,T]}$ be   two Ricci flows on $\mathbb C$ conformally equivalent
  to $|\dz|^2$. If $g_2(t)$ is instantaneously complete and $g_1(0)\le 
  g_2(0)$, then $g_1(t)\le g_2(t)$ for all $t\in[0,T]$. 
\end{cor}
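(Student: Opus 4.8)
The plan is to reformulate the desired ordering in terms of the logarithmic fast diffusion equation and then invoke the comparison principle of Rodriguez--V\'azquez--Esteban \cite{RVE97}. Writing the two flows as $g_i(t)=\ee^{2u_i(t)}|\dz|^2$ and passing to the variables $v_i:=\ee^{2u_i}$, both $v_1$ and $v_2$ solve the equation $\pddt v=\Delta\log v$ on $\mathbb C\cong\mathbb R^2$ by \eqref{eq:log-fde}. Under this substitution the hypothesis $g_1(0)\le g_2(0)$ becomes $v_1(0)\le v_2(0)$, and the conclusion $g_1(t)\le g_2(t)$ is equivalent to $v_1(t)\le v_2(t)$ for all $t\in[0,T]$.

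The essential difficulty is that $\mathbb C$ is noncompact, so a naive maximum principle fails: two solutions can be distinguished purely by their decay at spatial infinity, where mass may escape, and the larger initial solution need not remain on top if it decays too quickly. To rule this out I would feed the instantaneous completeness of $g_2(t)$ into Theorem \ref{thm:lower-barrier-C}. This yields, for all $|z|\ge2$ and $t\in(0,T]$, the lower barrier
\[ v_2(t,z) = \ee^{2u_2(t,z)} \ge \frac{2t\,\ee^{-2C}}{\bigl(|z|\log|z|\bigr)^2}, \]
so that $v_2$ decays at infinity no faster than the profile of a time-scaled complete hyperbolic metric on $\mathbb C\setminus\overline\Disc$. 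This quantitative control at infinity is precisely the admissibility condition placing the larger solution $v_2$ in the class for which ordering of initial data is preserved.

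With this barrier in hand, the comparison principle of \cite{RVE97} applies and propagates the initial ordering $v_1(0)\le v_2(0)$ forward in time, giving $v_1(t)\le v_2(t)$ and hence $g_1(t)\le g_2(t)$ throughout $[0,T]$. The main obstacle is entirely at infinity: one must check that the decay rate furnished by Theorem \ref{thm:lower-barrier-C} matches the hypotheses under which \cite{RVE97} guarantees comparison, i.e.\ that instantaneous completeness forces $v_2$ to lie slowly enough at infinity to dominate the competitor $v_1$. Once this controlled behaviour at infinity is secured, the remaining interior comparison is standard parabolic maximum-principle theory and presents no further difficulty.
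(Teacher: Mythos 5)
Your proposal is correct and follows exactly the paper's own argument: apply Theorem \ref{thm:lower-barrier-C} to the instantaneously complete flow $g_2(t)$ to obtain the decay condition \eqref{eq:comp-C-decay-cond}, and then invoke the Rodriguez--V\'azquez--Esteban comparison principle (Theorem \ref{thm:comparison-C}) to propagate the initial ordering. The final verification you flag as the "main obstacle" is immediate, since the lower bound produced by Theorem \ref{thm:lower-barrier-C} is verbatim the hypothesis of Theorem \ref{thm:comparison-C}.
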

\begin{proof}
  By Theorem \ref{thm:lower-barrier-C} the conformal factor $u(t)$ of
  $g_2(t)=\ee^{2u(t)}|\dz|^2$ satisfies the decay condition
  \eqref{eq:comp-C-decay-cond} of the comparison principle Theorem
  \ref{thm:comparison-C} by Rodriguez, Vazquez and Esteban, and the
  Corollary's statement follows.
\end{proof}

It is now easy to prove Theorem \ref{not_hyp_unique} by dividing 
into the two cases that the universal cover of $(\m,g_0)$
is conformally $\mathcal S^2$ or $\C$.

\begin{thm}
  \label{thm:exist-unique-C}
  Let $g_0$ be a smooth (possibly incomplete) Riemannian metric on the complex  plane $\mathbb C$, which is conformally equivalent to 
 $|\dz|^2$.   
  Then there exists a unique instantaneously complete Ricci flow 
  $g(t)$ with  $g(0)=g_0$ for all $t\in[0,T)$ up to a maximal time
  \[ T = \frac1{4\pi}\Vol_{g_0} \mathbb C \le\infty. \]
  Uniqueness here is in the sense that any other 
  instantaneously complete Ricci flow on $\C$ with initial
  metric $g_0$ will agree with $g(t)$ while they both exist.
  Moreover, the flow $g(t)$ is maximally stretched.
\end{thm}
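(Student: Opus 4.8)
The plan is to establish existence and uniqueness separately, reducing both to the results already proven for the disc. The key structural fact is that the universal cover of $\mathbb C$ (with its conformal structure) is $\mathbb C$ itself, or equivalently that $\mathbb C$ is already simply connected, so unlike the disc case there is no genuine covering to pass to; the competing geometry is flat rather than hyperbolic, which is precisely why Lemma \ref{lemma:barriers} fails and we must invoke Theorem \ref{thm:lower-barrier-C} instead.

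First I would construct the flow by the same exhaustion argument used in the proof of Theorem \ref{thm:exist-D}. Write $g_0 = \ee^{2u_0}|\dz|^2$ and exhaust $\mathbb C$ by discs $\Disc_j$ of radius $j$. On each $\Disc_j$ the restricted metric has bounded curvature and lies below a large multiple of the hyperbolic metric $H_j$ on $\Disc_j$, so Theorem \ref{thm:exist-D} (or Theorem \ref{ICexistence1} together with Corollary \ref{cor:long-time-existence-disc}) produces a maximally stretched, instantaneously complete flow $g_j(t)$ existing for all time with a uniform upper curvature bound on compact subsets. By Lemma \ref{lemma:compare-sequence} the conformal factors $u_j$ form a decreasing sequence, giving a uniform upper bound, while the pseudolocality estimate Theorem \ref{thm:chen-2d-local-curv-estim} of Chen yields the uniform lower bound near $t=0$ exactly as in the disc proof. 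Parabolic regularity then delivers a smooth limit $g(t) = \ee^{2u(t)}|\dz|^2$ which is a maximally stretched Ricci flow with $g(0)=g_0$; instantaneous completeness follows because each $g_j(t)$ is complete on $\Disc_j$ and the limit inherits the lower barrier $(2t)H_j$ locally, which forces the metric to blow up at infinity.

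The genuinely new ingredient here, and the main obstacle, is \textbf{uniqueness}, since no curvature or maximal-stretching hypothesis is placed on a competitor. This is where Corollary \ref{cor:comparison-principle-C} is decisive: given any two instantaneously complete flows $g_1(t), g_2(t)$ on $\mathbb C$ with the same initial data, I would apply the comparison principle in both directions. Since $g_2(t)$ is instantaneously complete, its conformal factor satisfies the decay condition of the Rodriguez--Vazquez--Esteban comparison principle (via Theorem \ref{thm:lower-barrier-C}), so $g_1(0)\le g_2(0)$ gives $g_1(t)\le g_2(t)$; reversing the roles gives the opposite inequality, hence $g_1(t)=g_2(t)$ on the common interval of existence. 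In particular the constructed flow agrees with any competitor, and maximal stretching is immediate. The subtle point is that the comparison principle requires \emph{one} of the two flows to be instantaneously complete to supply the spatial decay bound on its conformal factor; this is exactly the hypothesis available, and it is what replaces the bounded-curvature assumptions used elsewhere.

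Finally I would pin down the maximal time and the formula $T=\frac1{4\pi}\Vol_{g_0}\mathbb C$. Here the plan is to track the total volume: under \eqref{eq:ricci-flow-cf} one has $\frac{d}{dt}\Vol_{g(t)}\mathbb C = -\int_{\mathbb C} 2K[g(t)]\dmu = -2\int_{\mathbb C}(-\Delta u)\,\dx\dy$, which for the flat conformal class evaluates to $-4\pi\chi$-type Gauss--Bonnet deficit, giving $\Vol_{g(t)}\mathbb C = \Vol_{g_0}\mathbb C - 4\pi t$ as long as the flow exists; the flow can continue precisely until the volume is exhausted, yielding $T=\frac1{4\pi}\Vol_{g_0}\mathbb C$ (with $T=\infty$ when the volume is infinite). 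The only care needed is justifying the Gauss--Bonnet computation for incomplete or infinite-volume surfaces, but the instantaneous completeness together with the curvature bounds already established controls the boundary terms at infinity, so this reduces to a controlled limiting argument over the exhausting discs rather than anything genuinely new.
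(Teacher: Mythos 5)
Your uniqueness argument is exactly the paper's: Theorem \ref{thm:lower-barrier-C} supplies the decay condition \eqref{eq:comp-C-decay-cond} for any instantaneously complete competitor, and Corollary \ref{cor:comparison-principle-C} applied in both directions gives agreement; maximal stretching follows likewise. The existence and maximal-time parts, however, each contain a genuine gap.

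First, the exhaustion construction does not deliver instantaneous completeness of the limit. By Lemma \ref{lemma:compare-sequence} the sequence $u_j$ is \emph{decreasing}, so the limit inherits only upper bounds from the $u_j$; the lower barriers $(2t)H_j$ you cite are the hyperbolic metrics of the exhausting discs, and these degenerate to zero locally uniformly as $j\to\infty$ (at a fixed point $H_j\sim 4j^{-2}|\dz|^2$). In the proof of Theorem \ref{thm:exist-D} the barrier that survives the limit is $(2t)H$ with $H$ the hyperbolic metric of the \emph{target} domain; on $\mathbb C$ no such metric exists -- as you yourself observe -- and Theorem \ref{thm:lower-barrier-C} cannot be substituted because it presupposes that the flow is already instantaneously complete on all of $\mathbb C$, which would be circular. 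The paper instead builds the solution from \emph{below}: for finite volume it quotes DiBenedetto--Diller \cite{DD96} directly, and for infinite volume it approximates by an increasing sequence of finite-volume instantaneously complete solutions, so the limit dominates complete metrics and is therefore complete. The exhaustion-by-discs route appears in the paper only as an aside, its completeness deduced a posteriori by comparison with the solution already constructed.

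Second, the formula $T=\frac1{4\pi}\Vol_{g_0}\mathbb C$ cannot be extracted from the Gauss--Bonnet computation you sketch. The identity $\frac{d}{dt}\Vol_{g(t)}\mathbb C=-2\int_{\mathbb C}K\dmu=-4\pi$ requires $\int_{\mathbb C}K[g(t)]\dmu=2\pi$ exactly, i.e. that the flux $\oint_{\partial\Disc_R}\partial_r u\,ds$ tends to $-2\pi$; Cohn--Vossen gives only an inequality, and for general (non-complete) solutions of the logarithmic fast diffusion equation the area can be lost at any rate at least $4\pi$ -- this latitude is precisely the nonuniqueness phenomenon that instantaneous completeness is meant to exclude. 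Pinning the rate at exactly $4\pi$ for the maximal solution, and hence identifying the extinction time $T$, is the substantive content of the DiBenedetto--Diller result the paper invokes; the barrier of Theorem \ref{thm:lower-barrier-C} controls $u$ at infinity but not $\partial_r u$, so ``controlling the boundary terms'' is not a routine limiting argument.
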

\begin{proof}
  In \cite{DD96} DiBenedetto and Diller showed that
  if $\Vol_{g_0}\mathbb C<\infty$, there exists a maximally stretched and
  instantaneously complete solution $g(t)$ to the Ricci flow for $t\in[0,T)$
  up to a maximal time $T=\frac1{4\pi}\Vol_{g_0} \mathbb C$, with
  volume decaying linearly to zero as $t$ increases to $T$.  

  In the case of infinite volume $\Vol_{g_0}\mathbb C=\infty$ we are going
  to approximate the solution by a sequence of finite volume solutions: 
  Define a weakly increasing sequence $\bigl(g_{0,j}\bigr)_{j\in\mathbb N}$
  that converges smoothly locally to $g_0$, but has finite volume
  $\Vol_{g_{0,j}}\mathbb C<\infty$ for all $j\in\mathbb N$. 
  Then apply for each $j\in\mathbb N$ DiBenedetto and Diller's existence
  theorem to obtain instantaneously complete solutions $g_j(t)$ 
  with $g_j(0)=g_{0,j}$, defined for all 
  $t\in[0,T_j)$ up to a maximal time $T_j=\frac1{4\pi}\Vol_{g_{0,j}} \mathbb
  C\to\infty$ as $j\to\infty$. These instantaneously complete solutions allow
  us to use Corollary \ref{cor:comparison-principle-C} to see that the
  sequence $\bigl(g_j(t)\bigr)_{j\in\mathbb N}$ is also weakly increasing for
  all $j\ge j_0$ such that $t\le T_{j_0}$. By Lemma \ref{lemma:barriers}(ii)
  there is also a uniform upper barrier on compact subsets of
  $[0,\infty)\times\mathbb C$, hence by parabolic regularity
  theory, the sequence
  $\bigl(g_j(t)\bigr)_{j\in\mathbb N}$ converges locally smoothly on
  $[0,\infty)\times\mathbb C$ to a smooth Ricci flow $g(t)$
  with $g(0)=g_0$.
  Since for all $t\in(0,\infty)$ we have $g(t)\ge g_j(t)$ for all $j$
  sufficiently large so that $T_j>t$, we find that
  $g(t)$ is also complete. 

The claimed uniqueness and the maximally stretched property 
are a direct consequence of Corollary
\ref{cor:comparison-principle-C}.

An alternative way of constructing $g(t)$ would be to follow the strategy of Theorem \ref{thm:exist-D} but choosing $D_j$ to
be an exhaustion of $\C$ rather than $\Disc$.
This gives an instantaneously complete solution which by uniqueness
must agree with the solution above. 
\end{proof}

\subsection{Proof of Theorems \ref{thm:2d-existence} and \ref{thm:no-weird-solution}}
\label{last_sect}

\begin{proof}[Proof of \textbf{Theorem \ref{thm:2d-existence}}]
  Let $\pi:\tilde\Mf\to\Mf$ be the universal covering of $\Mf$ with the
  lifted metric $\tilde g_0=\pi^*g_0$, and let $\Gamma<\Isom(\tilde\Mf,\tilde
  g_0)$ be the discrete subgroup of the isometry group, isomorphic to
  $\pi_1(\Mf)$, such that $\Mf\cong\tilde\Mf/\Gamma$. 
  Then as is implicit throughout the paper, by virtue of the uniformisation theorem, $\bigl(\tilde\Mf,\tilde
  g_0\bigr)$ is conformally equivalent  
  either to the sphere $\mathcal S^2$, to the complex plane
  $\mathbb C$ or to the open unit disc $\Disc$.

  Therefore by Theorems \ref{thm:exist-unique-closed-surface}, \ref{thm:exist-unique-C} and
  \ref{thm:exist-D} resp.~there exists an instantaneously 
  complete and maximally stretched Ricci flow 
  $\bigl(\tilde g(t)\bigr)_{t\in[0,T)}$ on $\tilde\Mf$ with
  $\tilde g(0)=\tilde g_0$ up to a maximal time
  \[ T = \begin{cases}
    \frac1{8\pi} \Vol_{\tilde g_0}\tilde\Mf & 
    \text{if }\tilde\Mf\cong\mathcal S^2, \\
    \frac1{4\pi} \Vol_{\tilde g_0}\tilde\Mf & 
    \text{if }\tilde\Mf\cong\mathbb C, \\
    \qquad\infty & \text{if }\tilde\Mf\cong\Disc.
  \end{cases} \]
  By Lemma \ref{lemma:preserve-isom-RF}, $\Gamma$ acts by isometries on
  $\bigl(\tilde\Mf,\tilde g(t)\bigr)$ for every $t\in[0,T)$, so 
  we may quotient $\tilde g(t)$ to obtain uniquely a maximally stretched and
  instantaneously complete solution $g(t)=\pi_*\tilde g(t)$ on
  $\Mf=\tilde\Mf/\Gamma$ for all $t\in[0,T)$ with $g(0)=g_0$. 

  Finally, using the relation $|\Gamma|\cdot\Vol_{g_0}\Mf=\Vol_{\tilde
    g_0}\tilde\Mf$ we are going to phrase the maximal time $T$ in terms of
  $\bigl(\Mf,g_0\bigr)$ by distinguishing the only cases: 
  \[ T = \begin{cases}
    \frac1{8\pi}\Vol_{g_0}\Mf<\infty & \text{if $\tilde\Mf\cong\mathcal S^2$ and 
      $|\Gamma|=1$}\quad\Longrightarrow\quad\Mf\cong\mathcal S^2\\  
    \frac1{4\pi}\Vol_{g_0}\Mf<\infty & \text{if $\tilde\Mf\cong\mathcal S^2$ and
      $|\Gamma|=2$}\quad\Longrightarrow\quad\Mf\cong\mathbb R\!P^2\\
    \frac1{4\pi}\Vol_{g_0}\Mf\le\infty & \text{if $\tilde\Mf\cong\mathbb C$\hspace{.95ex} and
      $|\Gamma|=1$}\quad\Longrightarrow\quad\Mf\cong\mathbb C\\
    \qquad\infty & \text{if $\tilde\Mf\cong\mathbb C$\hspace{.95ex} and
      $|\Gamma|=\infty$}\\
    \qquad\infty & \text{if $\tilde\Mf\cong\Disc$}.
  \end{cases} \]
  The local convergence in the hyperbolic case follows also 
  from Theorem \ref{thm:exist-D}: 
  For convergence on an arbitrary ball $\gBall_H(p;r)$ in
  $\bigl(\Mf,H\bigr)$ choose a point $\tilde p\in \pi^{-1}(p)$ and 
  consider the ball $\gBall_{\pi^*H}(\tilde p;r)\subset\Disc$.
  Then the local smooth convergence of $\frac1{2t}g(t)$ to $H$ on
  $\gBall_H(p;r)$ is a consequence of Theorem \ref{thm:exist-D} which we can
  apply to show smooth convergence of $\frac1{2t}\tilde g(t)$ to $\pi^*H$ on
  $\gBall_{\pi^*H}(\tilde p;r)$. 
  The global convergence in the case that $g_0\le  
  MH$ for some $M>0$ is the statement of Theorem
  \ref{thm:Ck-uniform-conv-disc}.
\end{proof}

Last we provide the proof that in the case of a complete initial surface
with bounded curvature our solution does not differ from the Hamilton-Shi Ricci
flow. 

\begin{proof}[Proof of \textbf{Theorem \ref{thm:no-weird-solution}}]
  Since $g(t)$ is the unique maximally stretched solution, 
  it agrees with the solution from Theorem \ref{ICexistence1}
  which has a uniform upper bound to the 
  curvature. On the other hand, Chen's apriori estimate Theorem
  \ref{thm:chen-lower-curv-bd} provides also a uniform lower bound to the
  curvature. From \cite{Shi89} or \cite{Ham82} we know that $\tilde g(t)$ is
  complete and of bounded curvature. Therefore by Theorem 
  \ref{thm:uniqueness-complete-curv-bd} $g(t)$ and $\tilde g(t)$ must coincide.
\end{proof}

\begin{appendix}
  \section{Comparison principles}
  In this appendix we state different comparison principles and some direct
  consequences. We start with an elementary one whose proof can be found in
  \cite{GT10}. 
  \begin{thm}{\rm\cite[Theorem A.1]{GT10}}
    \label{thm:direct-comp-principle}
    Let $\Omega\subset\mathbb C$ be an open, bounded domain and for some
    $T>0$ let $u\in C^{1,2}\bigl((0,T)\times\Omega\bigr)\cap
    C\bigl([0,T]\times\bar\Omega\bigr)$ and $v\in
    C^{1,2}\bigl((0,T)\times\Omega\bigr)\cap
    C\bigl([0,T]\times\Omega\bigr)$ both be solutions of the Ricci flow
    equation \eqref{eq:ricci-flow-cf} for the conformal factor of the metric.
    Furthermore, suppose that for each $t\in[0,T]$ we have
    $v(t,z)\to\infty$ as $z\to\partial\Omega$.
    If $v(0,z)\ge u(0,z)$ for all $z\in\Omega$, then $v\ge u$ on
    $[0,T]\times\Omega$.
  \end{thm}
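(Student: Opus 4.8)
The plan is to set $w:=u-v$ and show $w\le0$ on $[0,T]\times\Omega$; since $w$ is continuous there, it suffices to prove this on each slab $[0,T-\sigma]\times\Omega$ and then let $\sigma\searrow0$. The natural device is to subtract the two copies of the conformal Ricci flow equation \eqref{eq:ricci-flow-cf} and linearise the difference: writing $u_s:=su+(1-s)v$ and using the fundamental theorem of calculus in $s$,
\[ \partial_t w = e^{-2u}\Delta u - e^{-2v}\Delta v = a\,\Delta w + c\,w \qquad\text{on }(0,T)\times\Omega, \]
where $a:=\int_0^1 e^{-2u_s}\,\df s>0$ and $c:=-2\int_0^1 e^{-2u_s}\Delta u_s\,\df s$. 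Thus $w$ solves a linear parabolic equation whose leading coefficient $a$ is strictly positive, and the whole task is an application of the parabolic maximum principle to this equation.

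The first step is to localise in space, and this is exactly what the blow-up hypothesis on $v$ is for. Since $\bar\Omega$ is compact and $u\in C([0,T]\times\bar\Omega)$, the function $u$ is bounded, whereas $v(t,z)\to\infty$ as $z\to\partial\Omega$; hence $w(t,z)\to-\infty$ as $z\to\partial\Omega$. Consequently any positive value of $w$ is confined to a compact subset $K\subset\subset\Omega$, and on $[0,T-\sigma]\times K$ both $u$ and $v$ are bounded, so $e^{-2u_s}$ is bounded above and below and the equation is \emph{uniformly} parabolic there. In particular, if $w$ ever becomes positive, then the supremum of $w$ over any compact time slab is a maximum attained at an interior spatial point, and the initial ordering $w(0,\cdot)=u(0,\cdot)-v(0,\cdot)\le0$ together with continuity forces the first such time to be positive, placing the relevant point inside the region $(0,T)\times\Omega$ of $C^{1,2}$ regularity.

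The one term in the linearised equation with no a priori sign is the zeroth-order term $c\,w$, and handling it is the main obstacle; moreover $c$ is indefinite and, since only continuity is assumed at $t=0$, may degenerate as $t\searrow0$. I would deal with the degeneracy by arguing from the first touching time $t_*:=\inf\{t:\sup_\Omega w(t,\cdot)>0\}$: continuity and $w(0,\cdot)\le0$ give $w(t_*,\cdot)\le0$, and on $[t_*,T-\sigma]\times K$ (with $t_*>0$) the coefficient $c$ is \emph{bounded}, say by $C_0$. I would then absorb the indefinite term in the standard way by passing to $W:=e^{-C_0(t-t_*)}w-\epsilon(t-t_*)$ for fixed $\epsilon>0$, which satisfies $\partial_t W = a\,\Delta W + (c-C_0)\bigl(W+\epsilon(t-t_*)\bigr)-\epsilon$ with $c-C_0\le0$; at an interior positive maximum of $W$ (necessarily at a time $>t_*$, since $W(t_*,\cdot)\le0$) the relations $\partial_t W\ge0$, $\Delta W\le0$ and $W>0$ yield $0\le\partial_t W\le-\epsilon<0$, a contradiction. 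Hence $w\le\epsilon(t-t_*)e^{C_0(t-t_*)}$ beyond $t_*$, and $\epsilon\searrow0$ contradicts the definition of $t_*$; the remaining case $t_*=0$ is excluded by the standard comparison principle for uniformly parabolic quasilinear equations with merely continuous initial data on the compact region $K$. Letting $\sigma\searrow0$ finishes the argument.
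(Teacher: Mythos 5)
The paper does not actually prove this statement --- it is imported from \cite{GT10} with proof by citation --- so your argument has to be judged on its own terms. Your setup is sound: the linearisation $\partial_t w=a\,\Delta w+c\,w$ is computed correctly; playing the boundedness of $u$ on $[0,T]\times\bar\Omega$ against the blow-up of $v$ to confine $\{w>0\}$ to a compact $K\subset\subset\Omega$ is the right use of the boundary hypothesis (though note the hypothesis is only pointwise in $t$, so producing a single $K$ serving the whole time slab strictly needs a further word); and the exponential-weight, first-touching-time argument on $[t_*,T-\sigma]\times K$ is correct \emph{when} $t_*>0$, since that set is then compactly contained in $(0,T)\times\Omega$, where $\Delta u$ and $\Delta v$, hence $c$, are bounded.

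The gap is the case $t_*=0$, and your own reduction shows this is the only case that matters: for $t_*>0$ you reach a contradiction, so the theorem is exactly equivalent to excluding $t_*=0$. You dispose of that case by appealing to ``the standard comparison principle for uniformly parabolic quasilinear equations with merely continuous initial data'', but no such off-the-shelf result applies here, for precisely the reason you yourself flagged: $u$ and $v$ are only continuous at $t=0$, so $c$ (built from $\Delta u$ and $\Delta v$) is uncontrolled as $t\searrow 0$, and every Gronwall-type absorption degenerates there ($\sup_K w(s,\cdot)_+\to0$ while the exponential factor may blow up as $s\searrow0$). The operator is also not proper ($e^{-2u}\Delta u$ fails to be monotone in $u$ where $\Delta u<0$), so viscosity-solution comparison does not apply off the shelf either. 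This is the logarithmic fast diffusion equation \eqref{eq:log-fde} with merely continuous initial data, where comparison is genuinely delicate (cf. \cite{RVE97}); the entire content of the theorem sits at $t=0$, and your proof defers exactly that point to an unnamed standard fact. To close it one must exploit the divergence structure $\partial_t e^{2u}=\Delta(2u)$ rather than pointwise bounds on the Laplacians: for instance, show that $t\mapsto\int_\Omega\bigl(e^{2u}-e^{2v}\bigr)_+\,\mathrm{d}z$ is nonincreasing on $(0,T)$ via Kato's inequality --- the lateral boundary contribution vanishes because $v\to\infty$ at $\partial\Omega$ forces the integrand to vanish near $\partial\Omega$ --- and then send $t\searrow0$ using continuity of $u,v$ down to $t=0$ and dominated convergence (the integrand is dominated by $e^{2\sup u}$ on the bounded set $\Omega$).
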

  As a direct consequence we can show that Ricci flows on discs of different
  sizes stay ordered.
  \begin{lemma}
    \label{lemma:compare-sequence}
    For $0<r<R<\infty$ let $\bigl(g_1(t)\bigr)_{t\in[0,T]}$ on $\Disc_R$
    and $\bigl(g_2(t)\bigr)_{t\in[0,T]}$ on $\Disc_r$ be two solutions to the
    Ricci flow which are conformally equivalent on $\Disc_r$ and satisfy
    \begin{compactenum}[(i)]
    \item $g_1(0)\bigr|_{\Disc_r} \le g_2(0)$,
    \item $g_2(t)$ is complete for all $t\in(0,T]$,
    \item there exists a constant $\kappa\in(0,\infty)$ such that  
      $K[g_2(t)]\le\kappa$ for all $t\in[0,T]$.
    \end{compactenum}
    Then $g_1(t)\bigr|_{\Disc_r}\le g_2(t)$ for all $t\in[0,T)$.
  \end{lemma}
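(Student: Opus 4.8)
The plan is to reduce the statement to the scalar comparison principle Theorem \ref{thm:direct-comp-principle} applied on the domain $\Omega=\Disc_r$. Writing both flows in conformal coordinates, $g_1=\ee^{2u_1}|\dz|^2$ on $\Disc_R$ and $g_2=\ee^{2u_2}|\dz|^2$ on $\Disc_r$, each $u_i$ solves the scalar equation \eqref{eq:ricci-flow-cf}, and the desired conclusion $g_1|_{\Disc_r}\le g_2$ becomes the pointwise inequality $u_1\le u_2$ on $[0,T)\times\Disc_r$. First I would record that, since $g_1$ is a smooth Ricci flow on the strictly larger disc $\Disc_R$ and $\overline{\Disc_r}\subset\subset\Disc_R$, the factor $u_1$ is smooth and hence bounded on $[0,T]\times\overline{\Disc_r}$; moreover hypothesis (i) is exactly the initial ordering $u_1(0,\cdot)\le u_2(0,\cdot)$ on $\Disc_r$ needed to start the comparison.

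The one nontrivial input required by Theorem \ref{thm:direct-comp-principle} is the boundary barrier for the larger solution $v:=u_2$, namely that $u_2(t,z)\to\infty$ as $z\to\partial\Disc_r$. I would supply this from the hyperbolic lower barrier of Lemma \ref{lemma:barriers}(i): the disc $\Disc_r$ carries a complete hyperbolic metric $H_r=\ee^{2h_r}|\dz|^2$ with $h_r(z)=\log\frac{2r}{r^2-|z|^2}$, and by hypothesis (ii) the flow $g_2$ is instantaneously complete and conformally equivalent to $H_r$. Lemma \ref{lemma:barriers}(i) therefore gives $(2t)H_r\le g_2(t)$, i.e.
\[ u_2(t,z)\ \ge\ h_r(z)+\tfrac12\log(2t), \]
so that for each fixed $t\in(0,T]$ we have $u_2(t,z)\to\infty$ as $z\to\partial\Disc_r$. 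Combined with the boundedness of $u_1$ on $\overline{\Disc_r}$, this forces $u_1-u_2\to-\infty$ at the spatial boundary at every positive time, which is precisely what prevents the supremum of $u_1-u_2$ in the parabolic maximum principle from escaping to $(0,T]\times\partial\Disc_r$. Together with the interior parabolic comparison, the only remaining part of the parabolic boundary is the initial slice $t=0$, where hypothesis (i) already gives $u_1-u_2\le0$, and I would conclude $u_1\le u_2$ on $[0,T)\times\Disc_r$.

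The main obstacle, and the place where care is genuinely needed, is that this boundary blow-up of $u_2$ is available only for $t>0$: since $g_2(0)$ need not be complete, $u_2(0,\cdot)$ may remain finite at $\partial\Disc_r$, so Theorem \ref{thm:direct-comp-principle} as literally stated (blow-up for every $t\in[0,T]$) does not immediately apply. The resolution I expect is that the degenerate corner $\{0\}\times\partial\Disc_r$ does not actually require blow-up: in the maximum-principle argument the initial slice is controlled by the pointwise ordering (i), while blow-up is used only along the lateral boundary at positive times; thus a direct inspection of the proof of Theorem \ref{thm:direct-comp-principle}, or an approximation letting the initial time and the radius tend to their limiting values, closes the gap. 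Here hypotheses (ii) and (iii) both play a role: (ii) makes Lemma \ref{lemma:barriers}(i) applicable (through Chen's lower curvature bound, Corollary \ref{cor:chen-lower-curv-bd-inst-complete}), while the upper curvature bound (iii) yields $\partial_t u_2=-K[g_2]\ge-\kappa$, giving the one-sided control $u_2(s,\cdot)\ge u_2(0,\cdot)-\kappa s$ that keeps $u_2$ from collapsing as $s\searrow0$ and thereby justifies the limiting argument at the initial corner.
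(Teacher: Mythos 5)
Your overall strategy is the paper's: reduce to Theorem \ref{thm:direct-comp-principle} on $\Omega=\Disc_r$, obtain the lateral boundary blow-up of $u_2$ at positive times from Lemma \ref{lemma:barriers}(i), and recognise that the only genuine difficulty is the corner $\{0\}\times\partial\Disc_r$, where hypothesis (iii) must enter. Up to that point everything you say is correct.

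The gap is that you never actually close the corner. Your first suggestion (re-inspect the proof of Theorem \ref{thm:direct-comp-principle}) does not obviously work: without blow-up of $u_2$ at $t=0$, the set where $u_1-u_2\ge\epsilon$ need not be compactly contained in $[0,T]\times\Disc_r$ --- it can accumulate at the corner, where the barrier $h_r(z)+\frac12\log(2t)$ is of the indeterminate form $+\infty-\infty$. Your second suggestion (shift the initial time by $s>0$ and use $u_2(s,\cdot)\ge u_2(0,\cdot)-\kappa s$) contains the right computation but is incomplete as stated: comparing $u_1(t)$ against $u_2(t+s)+\kappa s$ requires that this shifted-and-lifted function again be a solution (or at least a supersolution) of \eqref{eq:ricci-flow-cf}, and a vertical shift by a constant is \emph{not} compatible with the equation unless time is simultaneously rescaled; the naive supersolution check reduces to requiring $\Delta u_2\ge 0$, which is not assumed. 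The paper's device is precisely the exact solution
\[ v_\delta(t,z):=u_2\bigl(\ee^{-2\kappa\delta}(t+\delta),z\bigr)+\kappa\delta, \]
which blows up at $\partial\Disc_r$ already at $t=0$, satisfies $v_\delta(0,\cdot)\ge u_2(0,\cdot)-\kappa\ee^{-2\kappa\delta}\delta+\kappa\delta\ge u_1(0,\cdot)$ by (i) and (iii), and hence dominates $u_1|_{\Disc_r}$ by Theorem \ref{thm:direct-comp-principle}; letting $\delta\searrow0$ finishes the proof. You had all the ingredients, but the rescaling $t\mapsto\ee^{-2\kappa\delta}(t+\delta)$ is the step that makes the comparison legitimate, and it is missing.
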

  \begin{proof}
    Without loss of generality write $g_1(t)=\ee^{2u(t)}|\dz|^2$ and
    $g_2(t)=\ee^{2v(t)}|\dz|^2$ with $u(t)\in C^\infty(\Disc_R)$ and $v(t)\in
    C^\infty(\Disc_r)$. 
    For any $\delta\in(0,T)$ define
    \[ v_\delta(t,z) := v\bigl(\ee^{-2\kappa\delta}(t+\delta),z\bigr)+\kappa\delta
    \qquad \text{for } (t,z)\in[0,T-\delta]\times\Disc_r, \]
    which is a slight adjustment of $v$, again a solution to the Ricci flow
    \eqref{eq:ricci-flow-cf}
    \[ \left(\pddt v_\delta-\ee^{-2v_\delta}\Delta v_\delta\right)(t,z)
    = \ee^{-2\kappa\delta} \left(\pddt v-\ee^{-2v}\Delta v\right)
    \bigl(\ee^{-2\kappa\delta}(t+\delta),z\bigr) = 0. \]
    
    In order to compare $u\bigr|_{\overline{\Disc_r}}$ and $v_\delta$, 
    we are going to check the requirements of Theorem
    \ref{thm:direct-comp-principle}: For the conformal
    factor of the restricted metric we have $u\bigr|_{\overline{\Disc_r}}\in
    C\bigl([0,T]\times\overline{\Disc_r}\bigr)$. From (ii) and Lemma
    \ref{lemma:barriers}(i), we have that for all $t\in[0,T-\delta]$   
    \[ v_\delta(t,z)\ge\log\frac{2r}{r^2-|z|^2}+\frac12\log 2(t+\delta)\to\infty 
    \quad\text{as}\quad z\to\partial\Disc_r. \]
    Finally to check the initial condition use 
    the uniform upper bound $\kappa$ for the curvature of $v$ from (iii) to
    integrate \eqref{eq:ricci-flow-cf}, and we may estimate
    \[ v_\delta(0,z) =  v\bigl(\ee^{-2\kappa\delta}\delta,z\bigr)+\kappa\delta
    \ge v(0,z) - \kappa\ee^{-2\kappa\delta}\delta + \kappa\delta
    \stackrel{(i)}{\ge} u(0,z) \]  
    for all $z\in\Disc_r$. Thus by Theorem \ref{thm:direct-comp-principle} there holds
    $v_\delta(t,z)\ge u(t,z)$ for all $(t,z)\in[0,T-\delta]\times\Disc_r$ and
    all $\delta\in(0,T)$. Given any $(t,z)\in[0,T)\times\Disc_r$, we may
    conclude  
    \[ u(t,z) \le \lim_{\delta\searrow0} v_\delta(t,z) = v(t,z). \]
  \end{proof}
  The following more geometrical comparison principle from \cite{GT10}
  will us allow to give a simple proof of the uniqueness of complete
  Ricci flows with bounded curvature on surfaces.
  \begin{thm}{\rm\cite[Theorem 4.2]{GT10}}
    \label{thm:geom-comp-principle}
    For some $T>0$ let $\bigl(g_1(t)\bigr)_{t\in[0,T]}$ and
    $\bigl(g_2(t)\bigr)_{t\in[0,T]}$ be two conformally equivalent Ricci flows
    on a surface $\Mf^2$, and define $Q:[0,T]\times\m\to\R$ to be 
    the function for which $g_1(t)=\ee^{2Q(t)}g_2(t)$.
    Suppose further that $g_2(t)$ is complete for each $t\in [0,T]$
    and that for some constant $C\ge0$ we have
    \[ \text{(i) } \bigl|K[g_2]\bigr|\le C,\qquad 
    \text{(ii) } K[g_1]\le C, \qquad
    \text{(iii) } Q\le C \]
    on $[0,T]\times\m$. If $g_1(0)\le g_2(0)$, then 
    $g_1(t)\le g_2(t)$ for all $t\in [0,T]$.
  \end{thm}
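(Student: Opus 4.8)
The plan is to reduce the inequality to a scalar maximum principle for the relative conformal factor $Q$, and then to defeat the noncompactness of $\Mf$ using the completeness and curvature bounds. In a local isothermal coordinate write $g_i=\ee^{2u_i}|\dz|^2$; by \eqref{eq:ricci-flow-cf} we have $\partial_t u_i=-K[g_i]$, so $Q=u_1-u_2$ evolves according to $\partial_t Q=K[g_2]-K[g_1]$. Substituting the conformal change of the Gaussian curvature, $K[g_1]=\ee^{-2Q}\bigl(K[g_2]-\Delta_{g_2}Q\bigr)$, turns this into the quasilinear parabolic equation
\[ \partial_t Q=\ee^{-2Q}\Delta_{g_2}Q+K[g_2]\bigl(1-\ee^{-2Q}\bigr), \]
of which $Q\equiv0$ (the case $g_1=g_2$) is a solution. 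Since $g_1(t)\le g_2(t)$ means exactly $Q(t)\le0$, I want to compare $Q$ with the stationary solution $0$, starting from $Q(0)\le0$.

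First I would carry out the formal maximum principle. At an interior point realising a positive spatial maximum $m$ of $Q(t,\cdot)$ one has $\nabla_{g_2}Q=0$ and $\Delta_{g_2}Q\le0$, so the displayed equation together with $K[g_2]\le C$ gives $\partial_t Q\le K[g_2]\bigl(1-\ee^{-2m}\bigr)\le C\bigl(1-\ee^{-2m}\bigr)$. Thus the quantity $m(t):=\sup_\Mf Q(t,\cdot)$ satisfies, in a barrier sense, $\tfrac{\df}{\df t}m\le C\bigl(1-\ee^{-2m}\bigr)$; as the right-hand side vanishes at $m=0$ and is negative for $m<0$, comparison with this autonomous ODE and $m(0)\le0$ forces $m(t)\le0$ on $[0,T]$. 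The one genuine gap is that on a noncompact surface the spatial supremum need not be attained, so this computation has to be justified at spatial infinity.

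To do so I would use the three hypotheses precisely where they enter. Because $|\partial_t u_2|=|K[g_2]|\le C$, the metrics obey $\ee^{-2CT}g_2(0)\le g_2(t)\le \ee^{2CT}g_2(0)$, so $\bigl(\Mf,g_2(t)\bigr)$ is complete with uniformly bounded geometry on $[0,T]$; the bound $Q\le C$ makes the excess bounded above and keeps $\ee^{-2Q}\ge\ee^{-2C}$, so the equation is uniformly parabolic in the relevant range; and $K[g_1]\le C$ supplies the lower Laplacian bound $\Delta_{g_2}Q\ge K[g_2]-C\ee^{2Q}\ge -C\bigl(1+\ee^{2C}\bigr)$. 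With these in hand one can either introduce a proper barrier $\gamma\ge0$ with $\Delta_{g_2}\gamma$ and $|\nabla_{g_2}\gamma|_{g_2}$ bounded (a regularised $g_2(0)$-distance, whose Laplacian is controlled via Laplacian comparison from $K[g_2]\ge-C$) and run the maximum principle on $Q-\delta\gamma$, which now attains its supremum, or appeal to a parabolic Omori--Yau principle to produce, at almost-maximal points, the inequalities $|\nabla_{g_2}Q|\to0$ and $\limsup\Delta_{g_2}Q\le0$ needed to recover $\tfrac{\df}{\df t}m\le C\bigl(1-\ee^{-2m}\bigr)$ directly. Either way, letting the perturbation tend to zero yields $Q(t)\le0$, i.e.\ $g_1(t)\le g_2(t)$.

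The main obstacle is exactly this last step: upgrading the formal maximum principle to a rigorous one on the complete noncompact surface. The delicate point is that the supremum of $Q$ may only be approached along sequences escaping to infinity, so one must verify that the error terms generated by the barrier $\delta\gamma$ (in particular in the reaction term $K[g_2]\bigl(1-\ee^{-2Q}\bigr)$, where $\gamma$ can be of order $\delta^{-1}$ at the perturbed maximum) remain controlled as $\delta\searrow0$; it is precisely the combination of completeness, the two-sided control on $K[g_2]$, the upper bound on $K[g_1]$, and the bound $Q\le C$ that makes every coefficient appearing in this estimate uniformly bounded. The algebraic content---the evolution equation and the favourable signs of the diffusion and reaction terms at a maximum---is routine; all the real work lies in the behaviour at spatial infinity.
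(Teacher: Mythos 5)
First, a point of reference: this paper does not prove Theorem \ref{thm:geom-comp-principle} at all --- it is quoted from \cite[Theorem 4.2]{GT10} --- so there is no internal proof to compare against. Judged on its own terms, your strategy (reduce to the scalar equation $\partial_t Q=\ee^{-2Q}\Delta_{g_2}Q+K[g_2]\bigl(1-\ee^{-2Q}\bigr)$, observe the favourable signs at a spatial maximum, and defeat noncompactness using completeness of $g_2$, the two-sided bound on $K[g_2]$ and the bound $Q\le C$) is the natural one and matches the spirit of the cited proof; the derivation of the equation and the formal maximum-principle computation are correct.

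The genuine gap is in the final step, and your stated resolution of it does not suffice. With the static barrier $\delta\gamma$, at a positive maximum $x_t$ of $Q(t,\cdot)-\delta\gamma$ one only gets
\[ \ddt \sup_\Mf\bigl(Q-\delta\gamma\bigr) \;\le\; \delta\Lambda + K[g_2]\bigl(1-\ee^{-2Q}\bigr) \;\le\; \delta\Lambda + 2C\Bigl(\sup_\Mf\bigl(Q-\delta\gamma\bigr)+\delta\gamma(x_t)\Bigr), \]
and, exactly as you observe, $\delta\gamma(x_t)$ is only bounded by $C$ (since $Q\le C$); it does \emph{not} tend to $0$ as $\delta\searrow0$, because the maximiser may escape to infinity. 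So ``all coefficients are uniformly bounded'' delivers only $\sup Q^+(t)\le C' t$ after Gronwall, not $Q\le0$. The fix is the standard exponential weight: run the barrier argument on $Q\ee^{-2Ct}-\delta\gamma$ (equivalently, use the time-dependent barrier $\delta\ee^{2Ct}\gamma$). At a positive spatial maximum of this quantity one has $Q>0$, hence $K[g_2]\bigl(1-\ee^{-2Q}\bigr)\le 2CQ$, and this entire reaction term is cancelled by the $-2CQ\ee^{-2Ct}$ produced by differentiating the weight, leaving $\ddt\sup_\Mf\bigl(Q\ee^{-2Ct}-\delta\gamma\bigr)^+\le\delta\Lambda$ and hence $Q\le \delta\ee^{2Ct}\bigl(\gamma+\Lambda t\bigr)\to0$ as $\delta\searrow0$. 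Together with the (routine, but necessary) construction of a proper $\gamma\ge0$ with $\Delta_{g_2(t)}\gamma$ uniformly bounded above --- via Laplacian comparison from $K[g_2]\ge-C$ and the uniform equivalence $\ee^{-2CT}g_2(0)\le g_2(t)\le\ee^{2CT}g_2(0)$ that you already derived --- this closes your proof. The parabolic Omori--Yau alternative you mention faces the same absorption issue and would need the same exponential weight.
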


  The contribution of Chen-Zhu \cite{CZ06} to Theorem \ref{shihamilton}
  was the uniqueness in the complete case. In our situation where
  the underlying manifold is two-dimensional, this is far simpler
  to prove than in the general case; the statement and proof
  are as follows.
  \begin{thm}
    \label{thm:uniqueness-complete-curv-bd}
    Let $\bigl(g_1(t)\bigr)_{t\in[0,T]}$ and $\bigl(g_2(t)\bigr)_{t\in[0,T]}$
    be two complete Ricci flows on a surface $\Mf^2$, with uniformly bounded
    curvature.   
    If $g_1(0)=g_2(0)$, then $g_1(t)=g_2(t)$ for all $t\in[0,T]$. 
  \end{thm}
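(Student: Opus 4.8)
The plan is to prove uniqueness of complete, bounded-curvature Ricci flows on a surface by exploiting the two-dimensional structure: both flows stay in a fixed conformal class, so the problem reduces to comparing two conformal factors solving the same scalar PDE \eqref{eq:ricci-flow-cf}. The natural strategy is to apply the geometric comparison principle (Theorem \ref{thm:geom-comp-principle}) in \emph{both directions} to conclude $g_1(t)\le g_2(t)$ and $g_2(t)\le g_1(t)$, hence equality.

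\textbf{First} I would write $g_1(t)=\ee^{2Q(t)}g_2(t)$ for the conformal factor $Q$, noting that since $g_1(0)=g_2(0)$ we have $Q(0)\equiv 0$. Both flows are complete and have uniformly bounded curvature by hypothesis, so conditions (i) and (ii) of Theorem \ref{thm:geom-comp-principle} are immediate with a common constant $C\ge 0$ bounding $|K[g_1]|$ and $|K[g_2]|$ on $[0,T]\times\Mf$. The one nontrivial requirement is condition (iii), namely a uniform upper bound $Q\le C$ on $[0,T]\times\Mf$. \textbf{Then} I would establish such a bound by integrating the evolution equation for $Q$: since $\pddt Q = \half(K[g_2]-K[g_1])\cdot$(something)---more precisely, from \eqref{eq:ricci-flow-cf} the difference of the two conformal factors evolves in a controlled way---the uniform curvature bounds force $|\pddt Q|$ to be bounded by a constant depending only on $C$, so that $|Q(t)|\le Ct$ on $[0,T]$, giving the required one-sided bound on the compact time interval. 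With all three hypotheses verified, Theorem \ref{thm:geom-comp-principle} yields $g_1(t)\le g_2(t)$ for all $t\in[0,T]$.

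\textbf{Finally} I would swap the roles of $g_1$ and $g_2$. The hypotheses of the theorem are symmetric in the two flows (both are complete, both have uniformly bounded curvature, and the initial condition $g_2(0)=g_1(0)$ is symmetric), so the identical argument with $g_2(t)=\ee^{-2Q(t)}g_1(t)$ gives $g_2(t)\le g_1(t)$ for all $t\in[0,T]$. Combining the two inequalities forces $g_1(t)=g_2(t)$ throughout $[0,T]$, which is the claim.

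\textbf{The main obstacle} I anticipate is verifying hypothesis (iii), the uniform upper bound on the conformal factor $Q$. The comparison principle of Theorem \ref{thm:geom-comp-principle} is not symmetric in its treatment of the two metrics---it singles out $g_2$ as the complete barrier and requires the \emph{one-sided} bound $Q\le C$ on the conformal ratio---so one must check that integrating the scalar equation genuinely produces a bound that is uniform in both space and time, rather than merely pointwise. Because the time interval $[0,T]$ is finite and the curvatures are globally bounded, this should follow from a direct Gronwall-type estimate on $\ddt Q$; the key point is simply that in two dimensions the Ricci flow's conformal nature turns the whole comparison into a scalar parabolic problem to which Theorem \ref{thm:geom-comp-principle} applies cleanly in each direction.
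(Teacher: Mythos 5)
Your proposal is correct and follows essentially the same route as the paper: write $g_1(t)=\ee^{2Q(t)}g_2(t)$, bound $|Q|\le CT$ by integrating $\pddt Q = K[g_2]-K[g_1]$ using the uniform curvature bounds and $Q(0)=0$, and then apply Theorem \ref{thm:geom-comp-principle} in both directions. The only cosmetic difference is your hedged expression for $\pddt Q$; from \eqref{eq:ricci-flow-cf} it is exactly $K[g_2]-K[g_1]$, which is what the paper uses.
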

  \begin{proof}
    With respect to a local complex coordinate $z$, let us write
    $g_1(t)=\ee^{2u(t)}|\dz|^2$ and $g_2(t)=\ee^{2v(t)}|\dz|^2$ for some
    locally defined functions $u(t)$ and $v(t)$, and define $Q:=u-v$
    globally.
    Observe that $Q$ is uniformly bounded since $Q=0$ at $t=0$
    and the curvature
    of $g_1(t)$ and $g_2(t)$ is uniformly bounded
    \[ \left|\pddt Q\right| = \Big| K[u_2] - K[u_1] \Bigr| \le C<\infty 
    \qquad\Longrightarrow\qquad \Bigl|Q\Bigr| \le CT. \]
    Therefore we may apply Theorem \ref{thm:geom-comp-principle} twice to obtain
    $g_1(t)\le g_2(t)$ and $g_2(t)\le g_1(t)$ for all $t\in[0,T]$.
  \end{proof}

  We will also require an obvious uniqueness property of maximally stretched solutions: 
  \begin{rmk}
    \label{rmk:uniqueness-maximal-stretched}
    Let $\big(g(t)\bigr)_{t\in[0,T]}$  and $\big(\tilde
    g(t)\bigr)_{t\in[0,\tilde T]}$ be two conformally equivalent and maximally
    stretched Ricci flows on $\Mf^2$ with $g(0)=\tilde g(0)$. 
    Then $g(t)=\tilde g(t)$ for all $t\in\bigl[0,\min\{T,\tilde T\}\bigr]$.
  \end{rmk}
One application of this uniqueness property is the preservation 
of the isometry group under a maximally stretched Ricci flow:
  \begin{lemma}
    \label{lemma:preserve-isom-RF}
    Let $\big(g(t)\bigr)_{t\in[0,T]}$ be a maximally stretched
    Ricci flow on $\Mf^n$. Then the isometry group does not shrink under the
    flow: $\Isom\bigl(\Mf,g(0)\bigr)\subset \Isom\bigl(\Mf,g(t)\bigr)$ for all
    $t\in[0,T]$. 
  \end{lemma}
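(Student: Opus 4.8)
The plan is to exploit the defining comparison property of a maximally stretched flow, applied simultaneously to an isometry $\varphi$ of the initial metric and to its inverse. Fix $\varphi\in\Isom\bigl(\Mf,g(0)\bigr)$ and consider the pulled-back family $\bigl(\varphi^*g(t)\bigr)_{t\in[0,T]}$. Since pullback by the fixed, time-independent diffeomorphism $\varphi$ commutes with the Ricci tensor, $\Ric[\varphi^*g(t)]=\varphi^*\Ric[g(t)]$, this family is again a Ricci flow on $\Mf$; moreover its initial data is $\varphi^*g(0)=g(0)$, precisely because $\varphi$ is an isometry of $g(0)$.

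First I would feed $\varphi^*g(t)$ into the maximally stretched property of $g(t)$ as a competitor: since $\varphi^*g(0)=g(0)\le g(0)$, the definition immediately yields $\varphi^*g(t)\le g(t)$ for all $t\in[0,T]$. To obtain the reverse inequality I would run the same argument with $\varphi^{-1}$, which is also an isometry of $g(0)$: applying maximal stretchedness to the competitor $(\varphi^{-1})^*g(t)$ gives $(\varphi^{-1})^*g(t)\le g(t)$, and then pulling this inequality back by $\varphi$ produces $g(t)\le\varphi^*g(t)$. The step of pulling back an inequality relies only on the fact that pullback preserves the pointwise ordering of symmetric $2$-tensors, since $(\varphi^*h)(X,X)=h\bigl(\df\varphi(X),\df\varphi(X)\bigr)$ and $\df\varphi$ is a linear isomorphism on each tangent space, so positive semi-definiteness of $h_2-h_1$ is inherited by $\varphi^*h_2-\varphi^*h_1$.

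Combining the two inequalities yields $\varphi^*g(t)=g(t)$ for every $t\in[0,T]$, which is exactly the statement that $\varphi\in\Isom\bigl(\Mf,g(t)\bigr)$; as $\varphi$ was an arbitrary element of $\Isom\bigl(\Mf,g(0)\bigr)$, the claimed inclusion follows. I do not anticipate a genuine obstacle here: the only points meriting care are the two functorial facts just invoked — that $\varphi^*g(t)$ is again a Ricci flow (diffeomorphism invariance for a time-independent $\varphi$) and that pullback is monotone for the order $\le$ on metrics — together with the observation that the maximally stretched property may be tested against \emph{any} competing Ricci flow, with no completeness, curvature, or conformal-equivalence hypothesis imposed on the competitor. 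In particular this two-sided comparison is dimension-independent and, unlike the uniqueness in Remark \ref{rmk:uniqueness-maximal-stretched}, does not rely on any surface-specific structure.
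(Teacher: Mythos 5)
Your argument is correct and is essentially identical to the paper's proof: both pull back the flow by $\varphi$ and by $\varphi^{-1}$, apply the maximally stretched property to each as a competitor with the same initial data, and pull back the second inequality by $\varphi$ to get the two-sided comparison $\varphi^*g(t)=g(t)$. The only difference is that you spell out the routine functorial facts (diffeomorphism invariance of Ricci flow and monotonicity of pullback) that the paper leaves implicit.
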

  \begin{proof}
    Pick any $\phi\in\Isom\bigl(\Mf,g(0)\bigr)$. Since the Ricci flow is
    invariant under diffeomorphisms, $\phi^*g(t)$ is again a solution to the
    Ricci flow with $\phi^* g(0)=g(0)$ for all $t\in[0,T]$. 
    Because $g(t)$ is maximally stretched, we have $\phi^* g(t)\le g(t)$ and
    also $(\phi^{-1})^*g(t)\le g(t)$ for all $t\in[0,T]$. 
    Pulling back the latter inequality by $\phi$ yields $g(t)\le\phi^*g(t)$. 
    Therefore $g(t)=\phi^*g(t)$ and $\phi\in\Isom\bigl(\Mf,g(t)\bigr)$ for all
    $t\in[0,T]$. 
  \end{proof} 

  The last comparison principle is a result from the theory of the logarithmic
  fast diffusion equation on $\mathbb C$.
  
  \begin{thm}{\rm(Variant of Rodriguez-Vazquez-Esteban \cite[Corollary 2.3]{RVE97}.)}
    \label{thm:comparison-C}
    Let $\bigl(\ee^{2u(t)}|\dz|^2\bigr)_{t\in[0,T]}$ and 
    $\bigl(\ee^{2v(t)}|\dz|^2\bigr)_{t\in[0,T]}$ be two
    Ricci flows on the plane $\mathbb C$. 
If there exists $C<\infty$ such that 
$u(t)$ satisfies the decay condition 
    \begin{equation}
      \label{eq:comp-C-decay-cond}
      u(t,z) \ge -C -\log\bigl(|z|\log|z|\bigr) + \frac12\log(2t)
      \quad\text{for all }(t,z)\in(0,T)\times\mathbb C
    \end{equation}
    and $u(0)\ge v(0)$, then $u(t)\ge v(t)$ for all
    $t\in[0,T]$.  
  \end{thm}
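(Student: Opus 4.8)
The plan is to exponentiate to the logarithmic fast diffusion equation and then invoke the comparison result of Rodriguez--Vazquez--Esteban in the form recorded here. Writing $w_1:=\ee^{2u}$ and $w_2:=\ee^{2v}$, both functions solve $\pddt w=\Delta\log w$ on $[0,T]\times\C$, and the initial hypothesis $u(0)\ge v(0)$ is equivalent to $w_1(0)\ge w_2(0)$. Exponentiating the decay condition \eqref{eq:comp-C-decay-cond} turns it into the pointwise lower bound
\[ w_1(t,z)\;\ge\;\ee^{-2C}\,\frac{2t}{|z|^2(\log|z|)^2}\qquad\text{for all }(t,z)\in(0,T)\times\C. \]
Note that $\frac1{|z|^2(\log|z|)^2}=\ee^{2h}$ is exactly the conformal factor of the complete hyperbolic metric on $\C\setminus\overline{\Disc}$, so this says $w_1$ decays at infinity no faster than the critical logarithmic rate. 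The goal is to upgrade the initial ordering to $w_1\ge w_2$ on all of $[0,T]\times\C$.

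First I would verify that this lower bound is precisely the admissibility condition under which \cite[Corollary 2.3]{RVE97} asserts comparison. In their framework a solution decaying no faster than the critical profile dominates any solution with smaller initial data; the essential and delicate feature is that \emph{no} growth or decay condition is then required on the dominated solution $w_2$, which matches the fact that the statement constrains only the conformal factor $u$. Granting this identification of hypotheses, the conclusion $w_1\ge w_2$, and hence $u\ge v$, follows by direct citation.

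If instead one wished to argue from scratch, the natural route is the $L^1$ contraction estimate
\[ \ddt\int_{\C}(w_2-w_1)_+\dx\;\le\;0, \]
obtained by differentiating under the integral over the open set $\{w_2>w_1\}$, substituting $\pddt(w_2-w_1)=\Delta(\log w_2-\log w_1)$, and invoking Kato's inequality together with the divergence theorem; combined with $(w_2-w_1)_+\equiv0$ at $t=0$ this forces $w_2\le w_1$. The step I expect to be the main obstacle is controlling the flux $\int_{\{|z|=R\}}\partial_n(\log w_2-\log w_1)$ as $R\to\infty$: because the decay is only logarithmically borderline, these circle integrals are merely marginally negligible, and it is exactly here that the lower barrier \eqref{eq:comp-C-decay-cond} on $w_1$ must be exploited to prevent positive mass from leaking in from infinity. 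Making this flux control rigorous, without imposing any condition on $w_2$, is the technical heart of \cite{RVE97} and the reason the decay hypothesis cannot be dropped.
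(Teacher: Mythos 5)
Your proposal is correct and matches the paper's treatment: the paper gives no proof of this theorem, presenting it exactly as you do — rewrite $\ee^{2u}$, $\ee^{2v}$ as solutions of the logarithmic fast diffusion equation \eqref{eq:log-fde}, observe that the exponentiated decay condition \eqref{eq:comp-C-decay-cond} is the admissibility hypothesis of \cite[Corollary 2.3]{RVE97}, and cite that result. Your additional sketch of the $L^1$-contraction argument and the borderline flux control correctly identifies where the real work in \cite{RVE97} lies, but it is not part of the paper's argument.
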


  \section{Further supporting results}
  In \cite{Che09} Chen proves a very general apriori estimate for the scalar
  curvature of a Ricci flow without requiring anything but the completeness of
  the solution. 
  \begin{thm}{\rm(Chen \cite[Corollary 2.3(i)]{Che09}.)}
    \label{thm:chen-lower-curv-bd}
    Let $\bigl(g(t)\bigr)_{t\in[0,T]}$ be a smooth complete Ricci flow on a
    manifold $\Mf^n$. 
    If $R[g(0)]\ge -\kappa$ for some $\kappa\in[0,\infty]$, then 
    \[ R\bigl[g(t)\bigr] \ge -\frac n{2t+\frac n\kappa}\qquad\text{for all
    }t\in[0,T]. \] 
  \end{thm}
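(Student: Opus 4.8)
The plan is to reduce the estimate to a scalar reaction--diffusion inequality and then argue by comparison with an ordinary differential equation. Under the Ricci flow the scalar curvature evolves by the well-known equation
\[ \pddt R = \Delta_{g(t)} R + 2\bigl|\Ric[g(t)]\bigr|^2, \]
where $\Delta_{g(t)}$ is the Laplace--Beltrami operator of the evolving metric. Since the trace of $\Ric$ equals $R$, the Cauchy--Schwarz inequality on symmetric $2$-tensors gives the pointwise bound $\bigl|\Ric\bigr|^2 \geq \tfrac1n R^2$, whence
\[ \pddt R \geq \Delta_{g(t)} R + \tfrac2n R^2. \]
Thus $R$ is a supersolution of a semilinear heat equation whose reaction term is convex and superlinear, and the whole estimate will follow once we compare $R$ with the spatially constant solution of the associated ODE.

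Next I would solve the comparison ODE $\phi'(t) = \tfrac2n \phi(t)^2$ with $\phi(0) = -\kappa$. Separating variables gives $\phi(t) = -\bigl(\tfrac2n t + \tfrac1\kappa\bigr)^{-1} = -\tfrac{n}{2t + n/\kappa}$, which is precisely the claimed lower bound; the limiting case $\kappa = \infty$ (no hypothesis on $R[g(0)]$) corresponds to $\phi(0) = -\infty$ and yields the instantaneous smoothing bound $R \geq -\tfrac{n}{2t}$, which is exactly what Corollary~\ref{cor:chen-lower-curv-bd-inst-complete} uses in the surface case. Setting $F := R - \phi(t)$, one computes $\pddt F \geq \Delta_{g(t)} F + \tfrac2n (R+\phi) F$, so $F$ is a supersolution of a linear parabolic equation with $F(0) \geq 0$. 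On a \emph{closed} manifold the scalar maximum principle would now immediately give $F \geq 0$, i.e. $R(t) \geq \phi(t)$.

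The main obstacle is that $\Mf$ is only assumed complete, hence possibly noncompact, and --- crucially --- no bound on the full curvature tensor is available at positive times, so the infimum of $F$ could be approached only at spatial infinity and the naive maximum principle fails. Were the curvature bounded one could invoke an Omori--Yau or Karp--Li type maximum principle for complete manifolds; the whole point of Chen's estimate is to dispense with any such hypothesis. To make the comparison rigorous I would localise: fix $p \in \Mf$, multiply $F$ by a cutoff $\eta$ of the time-dependent distance $d_{g(t)}(p, \cdot)$, apply the maximum principle on the compact support of $\eta$, and then let the cutoff radius tend to infinity. The difficulty is controlling the error terms $\pddt \eta$ and $\Delta_{g(t)} \eta$, which requires controlling the distortion of the distance function under the flow; with no upper Ricci bound available, Perelman-type distance-distortion estimates do not apply directly, and one must instead exploit the scale-invariance of the inequality under $g(t) \mapsto \lambda^{-2} g(\lambda^2 t)$ together with completeness alone. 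I expect this noncompact maximum principle, rather than the algebraic reduction of the first two paragraphs, to be where essentially all of the work lies.
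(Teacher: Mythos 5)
First, a point of comparison: the paper does not prove this statement at all. It is imported verbatim from Chen's paper \cite{Che09} (his Corollary 2.3(i)) and used as a black box, so the only ``proof'' in the paper is the citation. Your algebraic reduction is the correct and standard one: Hamilton's evolution equation $\pddt R = \Delta_{g(t)}R + 2\bigl|\Ric\bigr|^2$, the trace inequality $\bigl|\Ric\bigr|^2\ge \tfrac1n R^2$, and comparison with the ODE solution $\phi(t)=-n/(2t+n/\kappa)$ (including the correct reading of the case $\kappa=\infty$) would finish the argument on a closed manifold.

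The gap is that the step you defer is the entire content of the theorem, and your sketch of it does not constitute an argument. You propose to cut off against the evolving distance function and then ``exploit scale-invariance together with completeness alone,'' but this is a restatement of the difficulty rather than a resolution of it: estimating $\pddt\, d_{g(t)}(p,\cdot)$ requires upper Ricci bounds near the endpoints of minimizing geodesics (Perelman-type distortion estimates), which is precisely the information that is unavailable here, and you do not explain how to avoid it. Chen's actual proof goes through a genuinely \emph{local} lower bound for $R$ (his Proposition 2.1/Theorem 2.2): on a parabolic region where the balls $\gBall_{g(t)}(x_0;r_0)$ remain compactly contained in $\Mf$, he obtains $R\ge -n/(2t+n/\kappa)$ up to an error of order $r_0^{-2}$, by applying the maximum principle to a cutoff (composed with the evolving distance) times the localized quantity and controlling the distance-distortion terms only at the interior minimum point of the auxiliary function, where the relevant curvature contributions have a favourable sign. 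The global statement then follows by exhaustion, letting $r_0\to\infty$ and using completeness. Without that local estimate, or some equivalent substitute for the noncompact maximum principle, your proof is incomplete at its only nontrivial step --- as you yourself acknowledge in the final paragraph.
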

  A direct transfer to our situation where we have an instantaneously complete
  solution on a surface is:
  \begin{cor}
    \label{cor:chen-lower-curv-bd-inst-complete}
    Let $\bigl(g(t)\bigr)_{t\in[0,T]}$ be a smooth instantaneously 
    complete Ricci flow on a surface $\Mf^2$. Then  
    \[ K\bigl[g(t)\bigr] \ge -\frac1{2t}\qquad\text{for all }t\in(0,T]. \]
  \end{cor}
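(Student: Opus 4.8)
The plan is to deduce this directly from Chen's scalar curvature estimate (Theorem~\ref{thm:chen-lower-curv-bd}) by exploiting two features of the present setting. First, in dimension $n=2$ the scalar curvature is simply $R=2K$, so the claimed inequality $K[g(t)]\ge-\frac1{2t}$ is nothing but $R[g(t)]\ge-\frac2{2t}$, which is exactly the $n=2$ form of Chen's conclusion. Second, although our flow need not be complete at $t=0$, it \emph{is} complete at every positive time, and that is precisely what is needed to invoke Theorem~\ref{thm:chen-lower-curv-bd} after a shift in the time variable.

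Concretely, I would fix $t_0\in(0,T]$ and, for each $\ep\in(0,t_0)$, pass to the time-translated flow $\hat g(s):=g(\ep+s)$ for $s\in[0,T-\ep]$. By instantaneous completeness, $\hat g(s)=g(\ep+s)$ is complete for every $s\in[0,T-\ep]$ (since $\ep+s>0$), so $\hat g$ is a genuine complete Ricci flow to which Theorem~\ref{thm:chen-lower-curv-bd} applies. The decisive observation is that one should apply that theorem with the trivial choice $\kappa=\infty$: the hypothesis $R[\hat g(0)]\ge-\kappa$ then becomes vacuous (we assume no curvature bound whatsoever on the slice $g(\ep)$), while the conclusion reads $R[\hat g(s)]\ge-\frac{n}{2s+n/\kappa}=-\frac2{2s}$ for all $s\in[0,T-\ep]$. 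Undoing the shift via $s=t-\ep$, this says $R[g(t)]\ge-\frac2{2(t-\ep)}$ for all $t\in[\ep,T]$, and in particular $R[g(t_0)]\ge-\frac1{t_0-\ep}$.

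Finally I would let $\ep\searrow0$. The left-hand side does not depend on $\ep$, whereas $-\frac1{t_0-\ep}$ increases to $-\frac1{t_0}$; hence $R[g(t_0)]\ge-\frac1{t_0}$, i.e. $K[g(t_0)]=\frac12R[g(t_0)]\ge-\frac1{2t_0}$. Since $t_0\in(0,T]$ was arbitrary, this is the asserted bound.

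The one genuine subtlety—and the reason the statement is phrased for \emph{instantaneously} complete flows rather than complete ones—is that Theorem~\ref{thm:chen-lower-curv-bd} demands completeness at the initial time, a property $g(0)$ may well lack. The time-shift by $\ep>0$ circumvents exactly this, the choice $\kappa=\infty$ ensures that having no lower curvature bound on the shifted initial slice $g(\ep)$ costs nothing, and the subsequent limit $\ep\to0$ recovers the sharp constant at every positive time. Beyond verifying that $\kappa=\infty$ is a legitimate choice in Chen's theorem—which is immediate from the stated inequality—there are no estimates to carry out.
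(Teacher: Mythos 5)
Your proof is correct and is exactly the argument the paper intends: the corollary is stated there as a "direct transfer" of Chen's estimate (Theorem \ref{thm:chen-lower-curv-bd}) with no written proof, and the time-shift by $\ep>0$ to gain completeness, the choice $\kappa=\infty$ to make the initial curvature hypothesis vacuous, and the limit $\ep\searrow0$ are precisely the implicit steps. Nothing to add.
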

  
  For further applications and a simple proof of the following special case of
  the Schwarz lemma of Yau, see \cite{GT10}.
  \begin{thm}{\rm (Yau \cite{Yau73}.)}
    \label{thm:yau}
    Let $\left(\Mf_1,g_1\right)$ and $\left(\Mf_2,g_2\right)$ be two Riemannian
    surfaces without boundary. If
    \begin{compactenum}[(i)]
    \item $\left(\Mf_1,g_1\right)$ is complete,
    \item $K[g_1]\ge -a_1$ for some number $a_1\ge0$, and
    \item $K[g_2]\le -a_2<0$,
    \end{compactenum}
    then any conformal map $f:\Mf_1\to\Mf_2$ satisfies
    \[ f^*(g_2)\le\frac{a_1}{a_2} g_1. \]
  \end{thm}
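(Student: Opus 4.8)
The plan is to reduce the tensorial inequality $f^*g_2\le (a_1/a_2)\,g_1$ to a scalar estimate and then attack it with a maximum principle. Since $f$ is a conformal map between surfaces, its pullback is a nonnegative multiple of $g_1$, so I would write $f^*g_2=e^{2\varphi}g_1$ for a function $\varphi$ (smooth wherever $f$ is non-degenerate, which is all that matters since we are after an upper bound on $\varphi$); the claim is then exactly that $\sup_{\Mf_1}e^{2\varphi}\le a_1/a_2$. To obtain an equation for $\varphi$, I would combine the conformal transformation law for the Gaussian curvature with the fact that curvature pulls back, $K[f^*g_2]=K[g_2]\circ f$. Writing $\Delta_{g_1}$ for the Laplace--Beltrami operator of $g_1$, this yields
\[ \Delta_{g_1}\varphi = K[g_1] - e^{2\varphi}\,\bigl(K[g_2]\circ f\bigr). \]

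The heart of the argument is a pointwise computation at a maximum of $\varphi$. Where $\varphi$ attains an interior maximum one has $\Delta_{g_1}\varphi\le 0$, and inserting the hypotheses $K[g_1]\ge -a_1$ and $K[g_2]\circ f\le -a_2<0$ gives
\[ 0 \ge \Delta_{g_1}\varphi \ge -a_1 + a_2\,e^{2\varphi}, \]
so that $e^{2\varphi}\le a_1/a_2$ at that point, hence everywhere. This already settles the case in which $\Mf_1$ is compact (and the trivial case $f$ constant).

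For the general complete, possibly non-compact, case I would replace the genuine maximum by the Omori--Yau generalised maximum principle, whose hypotheses are exactly supplied by the assumptions: $\Mf_1$ is complete by (i), and in two dimensions the lower curvature bound (ii) is precisely a lower Ricci bound $\Ric[g_1]\ge -a_1 g_1$. The principle produces a sequence $p_k\in\Mf_1$ with $\varphi(p_k)\to\sup\varphi$ and $\Delta_{g_1}\varphi(p_k)\le 1/k$; substituting these into the equation and letting $k\to\infty$ reproduces $a_2\sup e^{2\varphi}\le a_1$. The main obstacle is that this principle applies only to functions bounded above, whereas a priori $\sup\varphi$ could be $+\infty$, and I expect this to be the only delicate point. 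I would dispose of it by exploiting the coercivity built into the equation: wherever $\varphi$ is large one has $\Delta_{g_1}\varphi\ge a_2 e^{2\varphi}-a_1$, so $\varphi$ is strongly subharmonic there, which on a complete surface with $K[g_1]$ bounded below prevents $\varphi$ from escaping to $+\infty$ --- for instance by applying the maximum principle to a bounded monotone function of $\varphi$, or through a standard exhaustion-and-barrier argument. Once boundedness of $\sup\varphi$ is secured, the limiting argument above closes the proof.
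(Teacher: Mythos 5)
This statement is quoted in the paper as a known result of Yau; the paper gives no proof of it, referring instead to \cite{Yau73} and to \cite{GT10} for ``a simple proof of this special case'', so there is no in-paper argument to compare against. Your proposal is the classical proof and is essentially correct: writing $f^*g_2=e^{2\varphi}g_1$, the identity $\Delta_{g_1}\varphi=K[g_1]-e^{2\varphi}\bigl(K[g_2]\circ f\bigr)$ is right, the pointwise computation at a maximum is right, and you have correctly isolated the only delicate point, namely that the Omori--Yau principle needs $\varphi$ bounded above a priori. Two remarks on making that last step airtight. First, your phrase ``strongly subharmonic where large prevents escape to $+\infty$'' is not by itself a theorem (unbounded subharmonic functions abound on complete surfaces); what saves you is the specific quadratic forcing $\Delta u\ge 2a_2u^2-2a_1u$ for the smooth energy density $u=e^{2\varphi}=\tfrac12|df|^2$, and the standard implementation of your ``bounded monotone function'' idea is to apply Omori--Yau to $w=-(1+u)^{-1/2}$, using the \emph{gradient} conclusion $|\nabla w|(p_k)\to0$ as well as $\Delta w(p_k)\le 1/k$ --- the first-derivative information is genuinely needed there because the transform is concave, and the exponent $\tfrac32<2$ in $(1+u)^{3/2}$ versus $u^2$ is what closes the argument. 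Second, working with $u$ rather than $\varphi$ also disposes cleanly of the degeneracy at zeros of $df$, where $\varphi=-\infty$; $u$ is smooth everywhere and the desired inequality is vacuous where $u=0$. With those standard completions your outline is a correct proof. (For the record, the ``simple proof'' the paper points to in \cite{GT10} avoids Omori--Yau altogether by an exhaustion/comparison argument on geodesic balls, but nothing in the present paper requires you to follow that route.)
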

  
A more elaborate argument of Chen leads to the following
pseudolocality-type result giving 2-sided estimates on the curvature.
  \begin{thm}{\rm (Chen \cite[Proposition 3.9]{Che09}.)}
    \label{thm:chen-2d-local-curv-estim}
    Let $\bigl(g(t)\bigr)_{t\in[0,T]}$ be a smooth Ricci flow
    on a surface $\Mf^2$. If we have for some $p\in\Mf$, $r_0>0$ and $v_0>0$
    \begin{compactenum}[(i)]
    \item $\gBall_{g(t)}(p;r_0)\subset\subset\Mf$ for all $t\in[0,T]$;
    \item $\Bigl| K[g(0)]\Bigr| \le r_0^{-2}$ on $\gBall_{g(0)}(p;r_0)$;
    \item $\Vol_{g(0)}\gBall_{g(0)}(p;r_0) \ge v_0 r_0^2$,
    \end{compactenum}
    then there exists a constant $C=C(v_0)>0$ such that for all
    $t\in\bigl[0,\min\bigl\{T,\frac1C r_0^2\bigr\}\bigr]$ 
    \[ \Bigl| K[g(t)] \Bigr| \le 2r_0^{-2} 
    \qquad\text{on } \gBall_{g(t)}\Bigl(p;\frac{r_0}2\Bigr). \]    
  \end{thm}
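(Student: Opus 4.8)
The statement is a pseudolocality-type estimate, and I would prove it by parabolic rescaling followed by a contradiction/blow-up argument. All three hypotheses and the conclusion are invariant under $g\mapsto r_0^{-2}g$, $t\mapsto r_0^{-2}t$ (so that $K\mapsto r_0^2K$ and the volume of an $r_0$-ball becomes that of a unit ball), so I would first normalise $r_0=1$ and reduce to producing $C=C(v_0)$ with $|K[g(t)]|\le2$ on $\gBall_{g(t)}(p;\tfrac12)$ for $t\le\min\{T,C^{-1}\}$, given $|K[g(0)]|\le1$ and $\Vol_{g(0)}\gBall_{g(0)}(p;1)\ge v_0$. The heuristic is that, via the scalar evolution $\pddt K=\Delta_{g(t)}K+2K^2$, an interior spatial maximum obeys $\ddt K_{\max}\le 2K_{\max}^2$, whence $K_{\max}\le(1-2t)^{-1}\le2$ for $t\le\tfrac14$; thus the entire content is to rule out curvature flowing in from the uncontrolled exterior of $\gBall(p;1)$, a phenomenon that can only be excluded using the non-collapsing hypothesis (iii).

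If no such $C(v_0)$ existed, I would obtain a sequence of flows $\bigl(g_k(t)\bigr)$ satisfying the hypotheses but with $|K[g_k(t_k)](q_k)|>2$ for some $q_k\in\gBall_{g_k(t_k)}(p_k;\tfrac12)$ and $t_k\to0$. Applying a parabolic point-selection argument (in the style of Perelman and Hamilton) I would extract centres $(\bar q_k,\bar t_k)$ with curvature scale $Q_k:=|K[g_k(\bar t_k)](\bar q_k)|\to\infty$, almost maximal on a backward parabolic neighbourhood whose rescaled radius tends to infinity, and rescale each flow by $Q_k$.

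The volume hypothesis (iii) is what keeps the rescaled flows from collapsing: combined with the curvature control on the selection region it supplies a uniform injectivity-radius lower bound, so Hamilton's compactness theorem produces a smooth limit, which by construction is a complete, non-compact, non-collapsed ancient Ricci flow on a surface, non-flat since $|K|=1$ at the base point. Now Chen's lower curvature bound (Theorem \ref{thm:chen-lower-curv-bd}), which in two dimensions reads $K[g(t)]\ge-(2t+1/\kappa)^{-1}$, forces $K\ge0$ on the limit after letting the (ancient) time tend to $-\infty$. A complete, non-compact, non-collapsed, non-flat ancient surface of nonnegative curvature is ruled out by classification: the only candidate is the cigar soliton, which is collapsed at infinity and hence excluded by the uniform non-collapsing. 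This contradiction establishes the upper bound $K\le2$, and the matching lower bound $K\ge-2$ follows from a local version of Theorem \ref{thm:chen-lower-curv-bd}, again using (iii) to substitute controlled local geometry for global completeness.

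The main obstacle is the non-collapsing step: converting the single-time volume lower bound (iii) into a uniform injectivity-radius bound that both survives the blow-up rescaling and persists forward in time along the flow, since volume lower bounds are not automatically preserved under Ricci flow. I would handle this by first establishing short-time local distance-distortion and volume-ratio estimates (once the curvature bound is in force, $g(t)$ and $g(0)$ are uniformly comparable on $\gBall(p;\tfrac34)$ for $t\le C^{-1}$, so the relevant parabolic region stays non-collapsed and $\gBall_{g(t)}(p;\tfrac12)$ remains well inside the controlled region) and feeding these into the compactness step. Making the point-selection and the distance comparison mutually consistent via a continuity/bootstrap argument in $t$ is the delicate part of the write-up.
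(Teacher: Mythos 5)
The paper offers no proof of this statement: it is imported verbatim from Chen \cite[Proposition 3.9]{Che09} as a supporting result in the appendix, so there is no in-paper argument to compare yours against. That said, your outline --- normalise $r_0=1$, argue by contradiction, select points of almost-maximal curvature, rescale, extract an ancient limit, and rule it out using the lower scalar curvature bound together with non-collapsing, with the matching bound $K\ge -2r_0^{-2}$ coming from a localised form of Theorem \ref{thm:chen-lower-curv-bd} --- is essentially the strategy of Chen's own argument, so the architecture is the right one.

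The genuine gap is the one you flag yourself and then do not close: the non-collapsing needed for the compactness step. Hypothesis (iii) is a volume lower bound at the single time $t=0$ and the single scale $r_0$, whereas the blow-up requires an injectivity radius (equivalently, volume ratio) lower bound at the selected points $\bar q_k$, at the later times $\bar t_k$, and at the much smaller scale $Q_k^{-1/2}\to 0$. Your proposed remedy --- distance-distortion estimates ``once the curvature bound is in force'' --- is circular, since the two-sided curvature bound is precisely the conclusion being proved. The missing idea, and the reason the surface case is far easier than genuine pseudolocality (no entropy functional is needed), is that in two dimensions a lower bound for the scalar curvature \emph{is} a lower bound for the Ricci curvature. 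The localised version of Theorem \ref{thm:chen-lower-curv-bd} established in \cite{Che09} (proved by a maximum principle with a cutoff in the distance function and requiring no upper curvature bound) therefore supplies, on a slightly smaller ball and for all relevant times, exactly the Ricci lower bound needed (a) to control the evolution of lengths and of the volume element, so that (iii) persists at scale comparable to $r_0$ up to time $\bar t_k$, and (b) to run Bishop--Gromov at time $\bar t_k$ and push the volume ratio bound from scale $r_0$ down to scale $Q_k^{-1/2}$ and indeed to all scales, yielding a positive asymptotic volume ratio for the limit. Without this observation the compactness step cannot be performed. Once it is in place, the limit is best killed not by the classification you invoke (excluding the cigar requires non-collapsing at \emph{all} scales, i.e.\ exactly the asymptotic volume ratio bound above) but by the standard fact that a complete ancient solution with bounded nonnegative curvature and positive asymptotic volume ratio is flat, contradicting $|K|=1$ at the basepoint; compact limits are excluded by hypothesis (i).
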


Occasionally we will have to switch between equivalent metrics
in arguments, and will use the following elementary fact:
  \begin{lemma}
    \label{lemma:equiv-norms}
    Let $\bigl(\Disc,H\bigr)$ be the complete hyperbolic disc and
    $T\in\Gamma\bigl(\TD[(r,s)]\bigr)$ any $(r,s)$ tensor field. Then
    for every $k\in\mathbb N_0$ and $\varrho\in(0,1)$ there exists a constant
    $C=C(k,\varrho,r,s)>0$ 
    such that 
    \[ \frac1C \bigl\|T\bigr\|_{C^k(\Disc_\varrho,|\dz|^2)} \le 
    \bigl\|T\bigr\|_{C^k(\Disc_\varrho,H)} \le
    C \bigl\|T\bigr\|_{C^k(\Disc_\varrho,|\dz|^2)}. \]
    In particular, we have
    \[ \frac1C \sum_{j=0}^k \Bigl|\nabla^j_{|\dz|^2} T\Bigr|_{|\dz|^2} (0)
    \le \sum_{j=0}^k \Bigl|\nabla^j_H T\Bigr|_H (0)
    \le C \sum_{j=0}^k \Bigl|\nabla^j_{|\dz|^2} T\Bigr|_{|\dz|^2} (0). \]
  \end{lemma}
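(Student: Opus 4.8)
The plan is to exploit the fact that on the compactly contained disc $\overline{\Disc_\varrho}$ with $\varrho<1$ the hyperbolic metric $H=\ee^{2h}|\dz|^2$, where $h(z)=\log\frac2{1-|z|^2}$, is a smooth conformal perturbation of the flat metric whose conformal factor is uniformly bounded together with all of its derivatives: since $\varrho<1$, the function $h$ stays smooth and bounded (with all derivatives) away from the boundary $\partial\Disc$. Everything will then follow from the general principle that two smooth metrics which are uniformly comparable, with all derivatives, on a compact set induce uniformly equivalent $C^k$ norms.

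First I would record the pointwise comparison of the two metrics: since $\overline{\Disc_\varrho}$ is compact and contained in $\Disc$, there is a constant $C_0=C_0(\varrho)>0$ with $C_0^{-1}|\dz|^2\le H\le C_0|\dz|^2$ on $\overline{\Disc_\varrho}$, and the same comparison holds for the inverse metrics. Because the pointwise norm $|\cdot|_g$ of an $(r,s)$ tensor is built from $r$ copies of $g^{-1}$ and $s$ copies of $g$, this immediately yields a uniform equivalence of the pointwise tensor norms, $C_1^{-1}|T|_{|\dz|^2}\le|T|_H\le C_1|T|_{|\dz|^2}$ with $C_1=C_1(r,s,C_0)$. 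This already settles the case $k=0$ and, evaluated at $z=0$, the $k=0$ part of the ``in particular'' statement.

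The main work is the comparison of the covariant derivatives. Here I would use that the difference of the two Levi-Civita connections $A:=\nabla_H-\nabla_{|\dz|^2}$ is a $(1,2)$ tensor whose components are precisely the Christoffel symbols of $H$ in the coordinate $z$ (the flat Christoffels vanish), and these are explicit expressions in the first derivatives of $h$, hence smooth and bounded on $\overline{\Disc_\varrho}$ together with all their derivatives. Writing schematically $\nabla_H T=\nabla_{|\dz|^2}T+A*T$ and iterating, one obtains by induction on $j$ that $\nabla_H^j T$ is a finite sum of terms $P*\nabla_{|\dz|^2}^i T$ with $i\le j$, where each coefficient $P$ is a universal polynomial in the covariant derivatives of $A$, equivalently in the derivatives of $h$ up to order $j$. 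All these coefficients are bounded on $\overline{\Disc_\varrho}$, so, passing between $|\cdot|_H$ and $|\cdot|_{|\dz|^2}$ at each stage via the previous step, I get $|\nabla_H^j T|_H\le c\sum_{i\le j}|\nabla_{|\dz|^2}^i T|_{|\dz|^2}$ on $\overline{\Disc_\varrho}$. The reverse estimate follows symmetrically by writing $\nabla_{|\dz|^2}=\nabla_H-A$ and running the same induction.

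Summing over $j=0,\dots,k$ then gives the claimed $C^k$-norm equivalence with a constant $C=C(k,\varrho,r,s)$, and the pointwise ``in particular'' statement is the specialisation of these estimates to the single point $z=0$. The only real obstacle is the bookkeeping in the induction of the previous paragraph---keeping track of which derivatives of $h$ enter each coefficient and checking they are controlled---but since $h$ is a fixed smooth function, bounded with all derivatives on $\overline{\Disc_\varrho}$, every coefficient is automatically bounded and no genuine analytic difficulty arises.
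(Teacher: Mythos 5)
Your argument is correct and complete. The paper itself offers no proof of Lemma \ref{lemma:equiv-norms} --- it is stated as an elementary fact to be used when switching between equivalent metrics --- so there is nothing to compare against; your write-up (uniform two-sided comparison of $H$ and $|\dz|^2$ on the compact set $\overline{\Disc_\varrho}$, equivalence of the pointwise $(r,s)$-tensor norms, and an induction using the boundedness of the connection-difference tensor $A=\nabla_H-\nabla_{|\dz|^2}$ and all its derivatives) is exactly the standard argument the authors are implicitly invoking, and it correctly yields constants depending only on $k,\varrho,r,s$.
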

In order to bootstrap $C^0$ convergence into $C^k$ convergence
(using $C^l$ bounds) we will need to be able to interpolate:
  \begin{lemma}
    \label{lemma:interpolation-inequality}
    Let $u:\Ball^n\to[-1,1]$ be a smooth function such that
    for all $k\in\mathbb N$ 
    \[ \bigl\| \mathrm D^ku \bigr\|_{L^\infty(\Ball)} < \infty .\]
    Then for all $k\in\mathbb N$ and $\eta\in(0,1)$ there exist constants
    $C=C(k,\eta)>0$ and $l:=\left\lceil\nicefrac k\eta\right\rceil$ such that
    \begin{equation}
      \label{eq:interpolation-inequality}
      \bigl| \mathrm D^k u \bigr|(0) \le
      C \left( 1+ \bigl\| \mathrm D^lu
        \bigr\|_{L^\infty(\Ball)} \right) 
      \bigl\| u \bigr\|^{1-\eta}_{L^\infty(\Ball)}. 
    \end{equation}
  \end{lemma}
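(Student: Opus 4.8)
The plan is to reduce the statement to the classical Gagliardo--Nirenberg (equivalently, Landau--Kolmogorov) interpolation inequality for intermediate derivatives, and then to exploit the two special features built into the hypotheses: the top order is $l=\lceil k/\eta\rceil$, which forces $k/l\le\eta$, and the function is bounded, $\bigl\|u\bigr\|_{L^\infty(\Ball)}\le1$.

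First I would invoke the standard interior interpolation inequality on the unit ball: for integers $0<k<l$ there is a constant $C=C(k,l,n)$ with
\[ \bigl\|\mathrm D^k u\bigr\|_{L^\infty(\Ball)} \le C\,\bigl\|\mathrm D^l u\bigr\|_{L^\infty(\Ball)}^{k/l}\,\bigl\|u\bigr\|_{L^\infty(\Ball)}^{1-k/l} + C\,\bigl\|u\bigr\|_{L^\infty(\Ball)}. \]
The scale-invariant exponent $k/l$ on the top derivative is exactly the Gagliardo--Nirenberg relation $a=k/l$ obtained by setting $p=q=r=\infty$ in the interpolation of $\bigl\|\mathrm D^k u\bigr\|_{L^p}$ between $\bigl\|\mathrm D^l u\bigr\|_{L^r}$ and $\bigl\|u\bigr\|_{L^q}$; on a bounded domain one merely picks up the additive lower-order term $C\bigl\|u\bigr\|_{L^\infty}$. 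Since $l=\lceil k/\eta\rceil\ge k/\eta$ we have $k/l\le\eta$, hence $1-k/l\ge1-\eta$.

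The decisive step then uses $|u|\le1$. Because $0\le\bigl\|u\bigr\|_{L^\infty(\Ball)}\le1$ and $1-k/l\ge1-\eta$, raising a number in $[0,1]$ to the larger exponent only decreases it, so $\bigl\|u\bigr\|_{L^\infty}^{1-k/l}\le\bigl\|u\bigr\|_{L^\infty}^{1-\eta}$; the additive term is absorbed likewise, since $\bigl\|u\bigr\|_{L^\infty}=\bigl\|u\bigr\|_{L^\infty}^{\eta}\bigl\|u\bigr\|_{L^\infty}^{1-\eta}\le\bigl\|u\bigr\|_{L^\infty}^{1-\eta}$. For the top-derivative factor I would use the elementary bound $x^{k/l}\le1+x$, valid for all $x\ge0$ because $k/l\le1$. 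Combining these with the pointwise estimate $\bigl|\mathrm D^k u\bigr|(0)\le\bigl\|\mathrm D^k u\bigr\|_{L^\infty(\Ball)}$ gives
\[ \bigl|\mathrm D^k u\bigr|(0)\le C\bigl(1+\bigl\|\mathrm D^l u\bigr\|_{L^\infty(\Ball)}\bigr)\bigl\|u\bigr\|_{L^\infty(\Ball)}^{1-\eta}, \]
which is \eqref{eq:interpolation-inequality} with $l=\lceil k/\eta\rceil$ as claimed.

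The only genuinely non-routine ingredient is the base interpolation inequality itself; everything after it is elementary algebra driven by $\bigl\|u\bigr\|_{L^\infty}\le1$. I would either cite a standard reference for the multiplicative Gagliardo--Nirenberg estimate on the ball, or, for a self-contained treatment, derive it by iterating the one-dimensional Landau--Kolmogorov inequality along lines through the origin and summing over directions. Thus the main obstacle is purely this interpolation input; the packaging into the specific form $\bigl(1+\bigl\|\mathrm D^l u\bigr\|_{L^\infty}\bigr)\,\bigl\|u\bigr\|_{L^\infty}^{1-\eta}$ is then immediate, and it is precisely this form that feeds into \eqref{eq:Ck-uniform-conv-ii} to convert the strong $C^0$ decay of Lemma \ref{lemma:barriers} into the $C^k$ decay of Theorem \ref{thm:Ck-uniform-conv-disc}.
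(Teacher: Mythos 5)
Your proof is correct and follows essentially the same route as the paper: the heart of both arguments is the choice $l=\lceil k/\eta\rceil$ so that $k/l\le\eta$, combined with $\|u\|_{L^\infty(\Ball)}\le 1$ to trade the exponent $1-k/l$ for $1-\eta$ and to absorb the lower-order term. The only difference is that you quote the multiplicative Gagliardo--Nirenberg inequality directly, whereas the paper derives that multiplicative form from the additive $L^\infty$ interpolation inequality of Gilbarg--Trudinger via the rescaling $v(x)=u(\varepsilon x)$ with $\varepsilon=\|u\|_{L^\infty(\Ball)}^{1/l}$ --- a purely cosmetic distinction.
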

  \begin{proof}
    By \cite[Theorem 7.28]{GT01} (for example)
    for arbitrary $0<k<l$ there is a constant
    $C=C(l)>0$ such that for any smooth function $v\in C^\infty(\Ball)\cap
    L^\infty(\Ball)$ with bounded derivatives
    \begin{equation}
      \label{eq:ii-1}
      \bigl\| \mathrm D^k v \bigr\|_{L^\infty(\Ball)} \le C\left(
        \bigl\| v \bigr\|_{L^\infty(\Ball)} + \bigl\| \mathrm D^l v
        \bigr\|_{L^\infty(\Ball)} \right).  
    \end{equation}
    Now define $v(x):= u(\varepsilon x)$ for all $x\in\Ball$ with
    $\varepsilon=\bigl\| u \bigr\|_{L^\infty(\Ball)}^{\frac1l}\in(0,1]$ and
    choosing $l=\left\lceil\nicefrac k\eta\right\rceil$ we estimate with
    \eqref{eq:ii-1}  
    \begin{align*}
      \label{eq:ii-2}
      \bigl\| \mathrm D^k u \bigr\|_{L^\infty(\Ball_\varepsilon)} 
      = \varepsilon^{-k} \bigl\| \mathrm D^k v \bigr\|_{L^\infty(\Ball)} 
      &\le C\varepsilon^{-k}\left(\bigl\| v \bigr\|_{L^\infty(\Ball)}
        + \bigl\|\mathrm D^l v \bigr\|_{L^\infty(\Ball)}\right) \nonumber\\ 
      &\le C \left(\varepsilon^{-k} \bigl\| u
        \bigr\|_{L^\infty(\Ball_\varepsilon)} +
        \varepsilon^{l-k} \bigl\|\mathrm D^l u
        \bigr\|_{L^\infty(\Ball_\varepsilon)}\right)\nonumber\\
      &\le C \left( 1+ \bigl\|\mathrm D^l u
        \bigr\|_{L^\infty(\Ball)} 
      \right)\bigl\| u \bigr\|^{1-\frac kl}_{L^\infty(\Ball)} \nonumber\\
      &\le C \left( 1+ \bigl\|\mathrm D^l u
        \bigr\|_{L^\infty(\Ball)} 
      \right)\bigl\| u \bigr\|^{1-\eta}_{L^\infty(\Ball)}.
    \end{align*}
  \end{proof}
  
\end{appendix}

\bibliography{2d_ricci_flow}
\end{document}